\def\eps{{\varepsilon}}
\def\Leb{{\rm Leb}}
\def\Tor{\mathbb{T}}
\def\reals{\mathbb{R}}
\def\integers{\mathbb{Z}}
\def\bc{\mathbf{c}}
\def\bx{\mathbf{x}}
\def\btau{{\boldsymbol{\tau}}}
\def\brC{{\bar C}}
\def\brH{{\bar H}}
\def\cC{\mathcal{C}}
\def\cD{\mathcal{D}}
\def\cF{\mathcal{F}}
\def\cG{\mathcal{G}}
\def\cN{\mathcal{N}}
\def\cP{\mathcal{P}}
\def\cT{\mathcal{T}}
\def\eps{\varepsilon}
\newtheorem{Main}{Theorem}
\theoremstyle{definition}
\newtheorem{definition}{Definition}
\newtheorem{lemma}[definition]{Lemma}
\newtheorem{corollary}[definition]{Corollary}
\newtheorem{proposition}[definition]{Proposition}
\newtheorem{problem}[definition]{Problem}
\numberwithin{equation}{section}
\definecolor{OliveGreen}{rgb}{0,0.6,0}
\definecolor{bpurple}{rgb}{0.74,0.2,0.64}
\def\DS{\displaystyle}
\def\R{\mathbb{R}}
\def\Z{\mathbb{Z}}
\def\T{\mathbb{T}}
\def\N{\mathbb{N}}
\def\a{{\bf a}}
\def\dima2color{\color{purple}}
\title[Smooth zero entropy flows satisfying the classical CLT]
{Smooth zero entropy flows satisfying the classical central limit theorem}
\author{Dmitry Dolgopyat, Bassam Fayad, Adam Kanigowski}
\begin{document}
\maketitle
\begin{abstract}
We construct conservative analytic flows of zero
metric entropy which satisfy the  classical central limit theorem.  
\end{abstract}

\section{Introduction}

Let $(M,\zeta)$ denote a smooth orientable manifold $M$ with a smooth  measure $\zeta$ and
 $F_T$ be a $C^r$ flow on $M$ preserving $\zeta$. When $\zeta$ is a volume measure, we say that the flow $(F_T)$ is conservative.

In this paper we  work exclusively with flows and so the definitions below will be stated for flows. The definitions are analogous for diffeomorphisms with obvious modifications. Following \cite{DDKN}, we define the class of flows satisfying the Central Limit Theorem as follows:

\begin{definition}
\label{DefCLT}  Let $r\in (0,\infty]$. We say that a  flow $(F_T)\in C^r(M,\zeta)$ satisfies the
{\em Central Limit Theorem (CLT)} on $C^r$ if 
 there is a function $a:\R_+\to \R_+$ such that
 for each $A \in  C^r(M)$, 
 $$\frac{\DS \int_{0}^{T}A\circ F_s(\cdot)ds-T\cdot \zeta(A)}{a_T}$$ converges in law as $T\to\infty$
to normal random variable with zero mean and variance $\sigma^2(A)$ 
(such normal random variable will be denoted $\cN(0, \sigma^2(A))$) and, moreover, 
$\sigma^2(\cdot)$
is not identically equal to zero on $C^r(M).$
We say that $F$ satisfies the {\em classical CLT}
if one can take $a_T=\sqrt{T}.$ 
\end{definition}

In this definition we used for an integrable function $A$ on $M$,  the notation $\zeta(A):=\int_M A(x)d\zeta(x)$.

In \cite{DDKN} the authors constructed for every $r\in \N$, examples of conservative $C^r$ diffeomorphisms and flows of zero entropy  satisfying the classical CLT.  However the dimension of the manifold supporting such flows is a linear function of $r$ and so it goes to $\infty$ as $r\to \infty$. In particular the class of zero entropy systems proposed in \cite{DDKN} do not yield $C^\infty$ examples (see the end of the introduction below for more on this). In the current paper we  address, in the context of flows, the $C^\infty$ and real analytic cases. 

\begin{Main}\label{thm:main2} 
There exists a real analytic compact manifold $M$ with a real analytic volume measure $\zeta$ 
and a flow $(F_T) \in C^\omega(M,\zeta)$ that has zero metric entropy and satisfies  the classical CLT. 
\end{Main}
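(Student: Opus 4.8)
The plan is to realize the flow as a special flow (suspension) over a suitable analytic base automorphism with an analytic roof function, choosing both so that the Birkhoff sums along the flow enjoy a martingale-type structure forcing the classical CLT, while keeping the base system of zero entropy. A natural candidate for the base is a uniquely ergodic analytic diffeomorphism on an analytic manifold — built from an affine toral automorphism, or from a Anzai-type skew product over a rotation — whose spectral/statistical properties we can control. The key point is that in \cite{DDKN} the classical CLT for zero-entropy flows was obtained by an explicit coboundary/martingale decomposition; the obstruction there to getting $C^\infty$ or $C^\omega$ examples was that the construction forced the dimension of $M$ to grow with the smoothness $r$. So the first task is to isolate exactly where the dimension grew, and replace the finite-smoothness building blocks by a single analytic block of fixed dimension.

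First I would set up the suspension: let $(X,\nu)$ be a compact analytic manifold with an analytic measure-preserving $T\colon X\to X$ of zero entropy, let $\varphi\colon X\to\R_+$ be analytic and bounded away from $0$, and let $F_t$ be the special flow on $M = X^\varphi = \{(x,s): 0\le s<\varphi(x)\}$ with the analytic volume $\zeta = (\nu\times\mathrm{Leb})/\nu(\varphi)$. Zero entropy of $F_t$ is then automatic from zero entropy of $T$ by Abramov's formula, so that part is free. Second, I would reduce the CLT for $(F_t)$ to a CLT for Birkhoff sums of the base map: for an analytic observable $A$ on $M$, integrating along orbits over time $T$ is, up to a bounded error, a Birkhoff sum $\sum_{k=0}^{n-1} \Phi(T^k x)$ where $n\approx T/\nu(\varphi)$ and $\Phi(x) = \int_0^{\varphi(x)} A(x,s)\,ds - \varphi(x)\zeta(A)$ is analytic on $X$ with $\nu(\Phi)=0$. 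So it suffices to produce an analytic zero-entropy $(X,\nu,T)$ on a fixed-dimensional analytic manifold such that $\frac{1}{\sqrt n}\sum_{k<n}\Phi\circ T^k \Rightarrow \mathcal N(0,\sigma^2(\Phi))$ for all analytic $\Phi$ with $\nu(\Phi)=0$, with $\sigma^2\not\equiv 0$.

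Third — and this is the heart — I would build $T$ so that the needed CLT for Birkhoff sums holds. The cleanest route is a martingale approximation: arrange $T$ (e.g. as an analytic skew product $(x,y)\mapsto (x+\alpha, y + h(x))$ over a Diophantine rotation, or an analytic reparametrization thereof) so that every mean-zero analytic $\Phi$ decomposes as $\Phi = \psi\circ T - \psi + g$ with $\psi$ at least continuous and $g$ measurable with respect to a filtration along which $\{g\circ T^k\}$ is a reverse martingale difference sequence; then the martingale CLT (with $a_n=\sqrt n$) applies, and $\sigma^2(\Phi) = \lim \frac1n \EXP[(\sum g\circ T^k)^2]$, which one checks is not identically zero (it is positive, say, on a nonconstant eigenfunction-type observable). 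To keep zero entropy one uses that these skew products over rotations have zero entropy for free. The analyticity of $\psi$ and $g$ is where the hyperfast approximation / explicit Fourier estimates enter: one needs the cohomological equation to be solvable in the analytic category on the relevant subspace, or to replace the exact splitting by a good enough quantitative approximate splitting that still yields the $\sqrt n$ normalization uniformly over a spectral decomposition of $L^2$.

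The main obstacle, as in \cite{DDKN}, is reconciling three tensions simultaneously in a \emph{fixed-dimensional analytic} setting: (i) zero metric entropy, which pushes toward rigid, rotation-like behaviour; (ii) a genuine $\sqrt T$ CLT with nondegenerate variance, which needs enough "independence" / mixing of Birkhoff sums along the flow direction; and (iii) analyticity of all the auxiliary functions ($\psi$, $g$, the reparametrizing roof), which forbids the cut-off and bump-function tricks available in finite smoothness and instead demands sharp quantitative control of small divisors. I expect the decisive step to be the construction of the analytic base map together with the proof that the martingale approximation converges in a strong enough sense (uniformly over the analytic observables, with control of the variance from below), presumably via an Anosov–Katok style fast-periodic-approximation scheme carried out in the analytic topology, which is exactly the technology that lets one beat the dimension growth of \cite{DDKN}. \qed
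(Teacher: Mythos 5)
There is a genuine gap at the heart of your plan, and it is the step you yourself identify as decisive. You propose to obtain the classical CLT from the Birkhoff sums of the \emph{base} map itself, via a martingale--coboundary decomposition $\Phi=\psi\circ T-\psi+g$ with $\{g\circ T^k\}$ a reverse martingale difference sequence, for an analytic zero-entropy skew product over a Diophantine rotation. This cannot work as stated: a zero-entropy transformation has trivial Pinsker factor complement, so there is no generating filtration along which conditional expectations decay, and no nontrivial exact martingale structure to exploit. Concretely, for rotations and Anzai-type skew products over Diophantine rotations, a mean-zero analytic observable is either an (analytic or continuous) coboundary -- in which case $\sigma^2=0$ after the $\sqrt n$ normalization -- or its ergodic sums, while possibly unbounded, are governed by small-divisor resonances and do not converge to a Gaussian; the classical parabolic examples (horocycle flows, nilflows) illustrate that deviations of order $\sqrt T$ can occur without any CLT. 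In fact the paper records as an \emph{open problem} even the weaker task of producing a $C^\infty$ zero-entropy diffeomorphism whose own ergodic sums are $o(\sqrt N)$ for all smooth observables while some observable is not a coboundary; your plan implicitly claims to resolve a strictly harder version of this in the analytic category, with no mechanism supplied.

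The paper's route is structurally different and avoids this obstruction. The Gaussian fluctuations are \emph{not} produced by the base: they come from the fiber of a generalized $(T,T^{-1})$ skew product $F_T(x,y)=(K_Tx,\,G_{\tau_T(x)}y)$, where $(G_t)$ is an exponentially mixing analytic $\R^d$-action (a Weyl chamber flow on $SL(N+1,\R)/\Gamma$). The base $(K_T)$ is an analytic Kochergin flow on $\T^2$ (a mixing time change of a linear flow with degenerate saddles), and all that is required of it is that it be \emph{slowly parabolic}: its own ergodic integrals are $o(\sqrt T)$ in distribution (condition $S1$, so the base contributes nothing at the $\sqrt T$ scale), while a carefully chosen vector-valued cocycle $\btau$ has ergodic integrals exceeding $C\ln^2T$ outside sets of measure $o(T^{-5})$ (condition $S2$), which feeds enough "time" into the mixing fiber to produce the CLT via the criterion of \cite{DDKN}. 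Zero metric entropy of the full system is free because $\mu(\tau_i)=0$ forces the Lyapunov exponents of $F_T$ to vanish. The dimension growth in \cite{DDKN} came from using high-dimensional Diophantine torus rotations as the slowly parabolic base; the paper's actual innovation is showing that a fixed two-dimensional analytic Kochergin flow with sufficiently many well-placed singularities is slowly parabolic, which is a delicate analysis of ergodic sums of functions with power singularities over a Diophantine rotation -- not an Anosov--Katok scheme and not a cohomological equation in the analytic category.
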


We point out that in the examples we will construct to prove the above theorem, the flows will be analytic
 but the CLT will hold for all sufficiently smooth observables (of class $C^3$). 

 It is still an open problem to find $C^\infty$, zero entropy diffeomorphism which satisfies the classical CLT.  In Section \ref{ScCLT-Par}
 we will explain the reason why our construction does not extend simply to $\Z$ actions.

Similarly to \cite{DDKN}, the example we will construct to prove Theorem \ref{thm:main2} belong to the class of generalized $(T,T^{-1})$-transformations which we now define. We will do so in terms of flows, the definitions for diffeomorphisms being analogous. 

\begin{definition}\label{def:TTinv} 
Let $(K_T)_{T\in \R}$ be a $C^r$-flow, $r\in \N^*\cup \{\infty, \omega\}$, on a manifold $X$ preserving a smooth measure $\mu$ and let $\tau=(\tau_1,\ldots,\tau_d):X\to \R^d$ be a 
 $C^r$ function (called a {\em cocycle} in what follows). Let $(G_t)_{t\in \R^d}$ be an  $\R^d$ action of class $C^r$ on a manifold $Y$ preserving a smooth measure $\nu$. Set
\begin{equation}\label{eq:Tcont}
F_T(x,y)=(K_T(x),G_{\tau_T(x)}y)
\quad\text{where}\quad \tau_T(x)=\int_0^T\tau(K_s x)ds.
\end{equation}
 Then $(F_T)$ is a $C^r$ flow on $M=X\times Y$ preserving the smooth measure $\zeta=\mu\times \nu$.
\end{definition}

Note that by \cite[Lemma 2.1]{DDKN}
if 
the metric entropy of $(K_T,\mu)$ vanishes and $\mu(\tau_i)=0$ for every $i\in [1,d]$, then 
the metric entropy of $(F_T,\zeta)$ is zero\footnote{ \cite[Lemma 2.1]{DDKN} follows
from Ruelle inequality and the fact that the Lyapunov exponents of $F_T$ are zero.}.

On the other hand, the {\em topological} entropy of $F_T$ in our example is positive. In contrast 
in \cite{DDKN} an example is given of a finitely smooth $(T, T^{-1})$ diffeo which satisfies the classical
CLT and has zero topological entropy. In fact, the example in \cite{DDKN} has a rotation in the base and
so the base is uniquely ergodic. In our construction the base map has $N+1$ ergodic invariant measures:
the Lebesgue measure and measures supported at the fixed points. The measures which project to the Dirac
measure on the base but are smooth in the fiber have positive entropy, so the topological entropy of $F_T$
is positive. It is an open problem to construct an analytic flow which satisfies the classical CLT
and has zero topological entropy.

Following \cite{DDKN}, the examples we will give to prove Theorem \ref{thm:main2} are of the form \eqref{eq:Tcont}.  To be more specific, we need to explicit our choices for the flow $(K_T)_{T\in \R}$, 
 the fiber dynamics $(G_t)_{t\in \R^d}$, and the cocycle $\tau$.

On the base we will use area preserving smooth flows on $\T^2$ with degenerate saddles. These belong to the class of conservative surface flows  called {\em Kochergin flows}.
They are the simplest mixing examples of conservative surface flows and were introduced by Kochergin in the 1970s \cite{Koch}. 
 Kochergin flows are time changes of linear flows on the $2$-torus with an irrational slope and with finitely many rest points (see Figure \ref{fig.kochergin} and Section \ref{SSOverview}  for a precise definition of Kochergin flows). 
Equivalently these flows can be viewed as special 
flows over a circular irrational rotation and under a ceiling, or roof,  function with at least one power singularity 
\footnote{The special flows are defined in Section \ref{sec.koc}, see equation \eqref{eq.special} 
and Figure \ref{sym}.}

\begin{figure}[htb]
 \centering
  \resizebox{!}{5cm} {\includegraphics[angle=1]{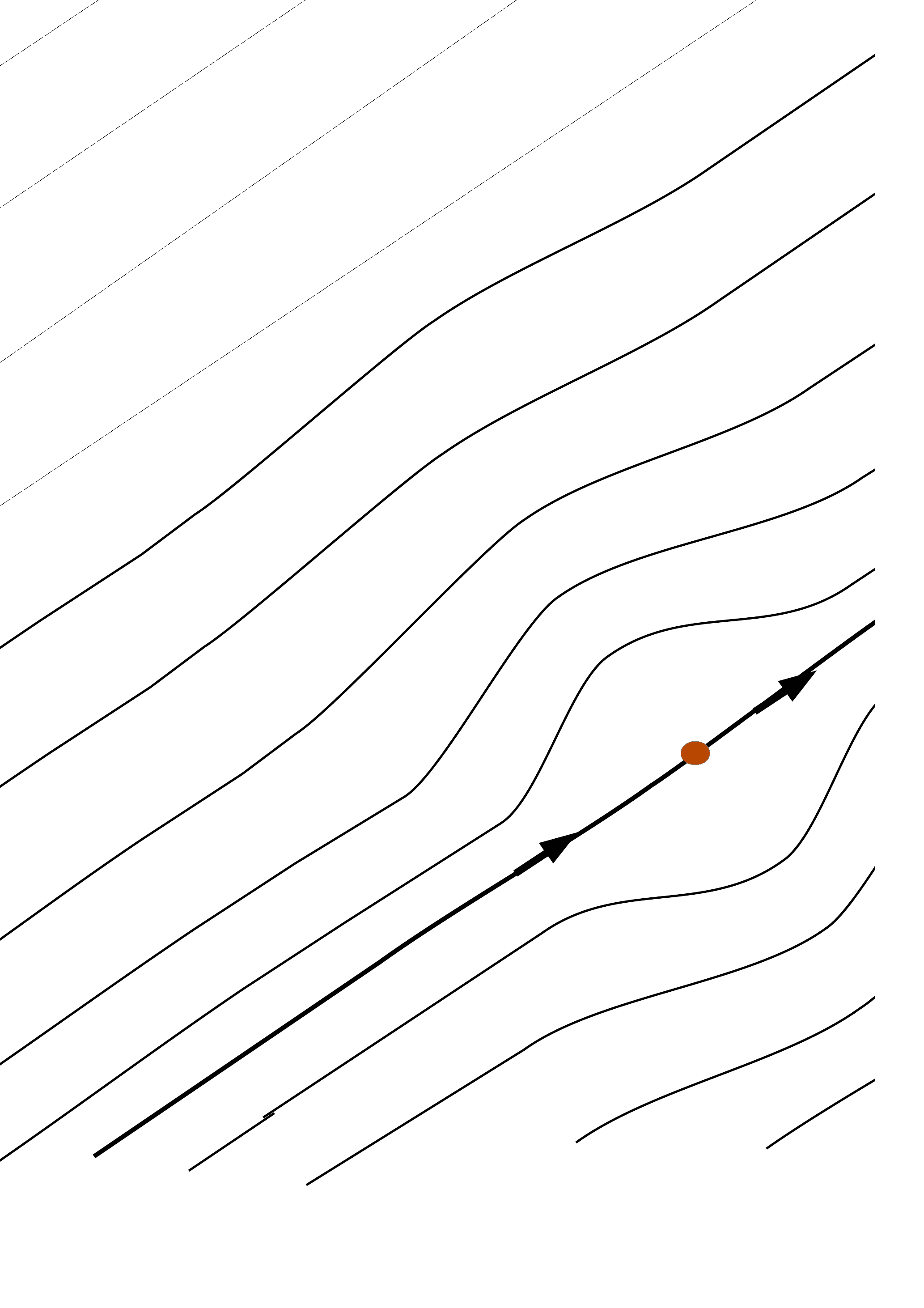}}
\caption{\small Torus flow with one degenerate saddle acting as a stopping point.} 
  \label{fig.kochergin}
\end{figure}

The Kochergin flows that we will consider will have ceiling functions with power singularities of exponent $\gamma\in (0,1/2)$, and will have a rotation number on the base that satisfies a full measure Diophantine type condition. 

For the fiber dynamics, following \cite{DDKN}, we will just need the property of exponential mixing of all orders.  
A classical example of a analytic $\R^d$ action which is exponentially mixing of all orders is the {\em Weyl chamber flow}: Let $d\geq 1$ and let  $\Gamma$ be a co-compact lattice in $SL(d+1,\R)$. Let $D_+$ be the group of diagonal matrices in $SL(d+1,\R)$ with positive elements on the diagonal acting on $SL(d+1,\R)/\Gamma$ by left translation. Then $D_+$ is an $\R^d$ action that preserves Haar measure $\nu$ on $SL(d+1,\R)/\Gamma$  and that is exponentially mixing of all orders. Hence, we can take $G_t$ to be $D_+$.

We are ready now to give a more explicit statement of Theorem \ref{thm:main2} that will be made more precise in Section \ref{sec.koc} after Kochergin flows are precisely defined. We denote $\mu$ the Lebesgue measure on $\T^2$. 

\begin{Main} \label{thm:main22} There exists $N\in \N$ and a Kochergin flow $(K_T,\T^2,\mu)$,  
with $N$ singularities and a function $\tau=(\tau_1,\ldots,\tau_{N}) \in C^\omega(\T^2,\R^{N})$ such that $\mu(\tau_i)=0$, for every $i\in [1,N]$, and such that the flow 
$(F_T)\in C^\omega(\T^2\times (SL(N+1,\R)/\Gamma),\mu\times \nu)$ 
defined by $F_T(x,y)=(K_T(x),G_{\tau_T(x)}y)$ 
satisfies the classical CLT.
\end{Main}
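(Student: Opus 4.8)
The flow will be of the generalized $(T,T^{-1})$ form \eqref{eq:Tcont}, with base $(K_T,\T^2,\mu)$ an analytic Kochergin flow, fibre $(G_t,SL(N+1,\R)/\Gamma,\nu)$ the Weyl chamber flow, and $\tau=(\tau_1,\dots,\tau_N)\in C^\omega(\T^2,\R^N)$ an analytic cocycle with $\mu(\tau_i)=0$. Granting this, zero metric entropy is immediate from \cite[Lemma 2.1]{DDKN}, and $(F_T)\in C^\omega$ is immediate from analyticity of $K$, $G$ and $\tau$. Following the scheme of \cite{DDKN}, the classical CLT for $(F_T,\zeta)$ is reduced---using only the exponential mixing of all orders of the Weyl chamber flow---to a limit theorem for the cocycle path $s\mapsto \tau_s(x)=\int_0^s\tau(K_v x)\,dv$ over the base: one writes $A\in C^3(M)$ as $A=\bar A+A^0$ with $\bar A(x)=\int_Y A(x,y)\,d\nu(y)$, shows that the mean-zero base function $\bar A-\mu(\bar A)$ contributes $o(\sqrt{T})$, and treats $A^0$ by the method of moments. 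Thus the whole argument hinges on establishing suitable path estimates for $\tau_T$ for our analytic data; the rest is, in spirit, as in \cite{DDKN}.

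\textbf{The base and the cocycle.} The plan is to take $(K_T)$ to be an area-preserving analytic flow on $\T^2$ whose generating vector field vanishes, to a prescribed order, at $N$ points $p_1,\dots,p_N$ (the degenerate saddles) and has irrational rotation number $\alpha$ elsewhere. For suitable orders of vanishing, the induced special flow over $R_\alpha$ has a roof $\varphi$ with a power singularity of one fixed exponent $\gamma$ at each of the $N$ points $\theta_1,\dots,\theta_N$ corresponding to the $p_j$; I would fix $\gamma\in(0,1/2)$ and---this is the one arithmetic constraint that forces $N$ to be large---choose $N$ with $N\gamma>1$, and take $\alpha$ in the full-measure set of Diophantine numbers. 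For the cocycle I would take analytic $\tau_i$ with $\mu(\tau_i)=0$; then the functions $\Phi_i(\theta):=\int_0^{\varphi(\theta)}\tau_i(\theta,t)\,dt$ inherit power-$\gamma$ singularities at the $\theta_j$, i.e.\ $\widehat{\Phi_i}(m)\asymp |m|^{\gamma-1}$, and a small perturbation of the $\tau_i$ (acting on the leading singular coefficients, which depend, roughly linearly, on the behaviour of $\tau_i$ near the $p_j$) makes the $N\times N$ matrix of these coefficients invertible. This non-degeneracy is what guarantees that the $\R^N$-valued process below genuinely explores all $N$ directions.

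\textbf{The path estimates.} Unfolding the flow integral gives $\tau_{i,T}(x)=\sum_{k<n(x,T)}\Phi_i(R_\alpha^k\theta)+O(1)$ for $x=(\theta,t)$, where $n(x,T)$ is of order $T$. From $\widehat{\Phi_i}(m)\asymp|m|^{\gamma-1}$, a Fourier/small-divisor analysis organized along the continued-fraction denominators of $\alpha$ is expected to yield: (i) $\|\tau_{i,T}\|_{L^2}\asymp T^{\gamma+o(1)}$, together with a scaling/tightness statement for $\tau_T/T^{\gamma}$; (ii) the anticoncentration bound $\Prob_x\big(|\tau_r(x)|\le C\big)\lesssim r^{-N\gamma+o(1)}$, where the non-degeneracy of the coefficient matrix enters to make the $N$ components jointly spread out; and (iii) the transience estimate $\int_{\T^2}\big(\int_0^T\!\int_0^T e^{-c|\tau_s(x)-\tau_u(x)|}\,ds\,du\big)\,d\mu(x)=O(T)$, which follows from (ii), the cocycle identity $\tau_s(x)-\tau_u(x)=\tau_{s-u}(K_u x)$, and $N\gamma>1$, together with the multi-point analogues needed to make the moment method converge to the Gaussian moments (rather than merely stay bounded). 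The same type of estimate, applied to $\Psi(\theta):=\int_0^{\varphi(\theta)}(\bar A-\mu(\bar A))(\theta,t)\,dt$, gives $\int_0^T(\bar A-\mu(\bar A))\circ K_s\,ds=O(T^{\gamma+o(1)})=o(\sqrt{T})$ since $\gamma<1/2$. Finally, expanding $\EXP\big[(\int_0^T A^0\circ F_s\,ds)^{2p}\big]$, carrying out the $y$-integration by exponential mixing of all orders of the Weyl chamber flow (which collapses the $2p$-point correlation to a sum over pairings of two-point correlations $\rho(t)=\int_Y A^0(\cdot,G_t\cdot)\,d\nu$, $|\rho(t)|\lesssim e^{-c|t|}$), and controlling the off-diagonal base configurations by (iii), one obtains leading term $(2p-1)!!\,\sigma^2(A)^p\,T^p$; hence $T^{-1/2}\int_0^T(A-\zeta(A))\circ F_s\,ds\Rightarrow\cN(0,\sigma^2(A))$ for all $A\in C^3(M)$, and taking $A=A(y)$ a suitable non-coboundary observable on $SL(N+1,\R)/\Gamma$ shows $\sigma^2\not\equiv0$. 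This proves Theorem~\ref{thm:main22}, hence Theorem~\ref{thm:main2}.

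\textbf{Main obstacle.} The crux is the path analysis above (items (ii), (iii) and their multi-point versions): one must establish these with enough uniformity in $T$ while working only with an a.e.\ Diophantine $\alpha$, so that the resonant contributions of the continued-fraction denominators cannot be absorbed into a clean self-similar scaling limit and must be estimated by hand; and one must do this for an \emph{analytic} cocycle, which removes the freedom to localize $\tau$ near a single saddle and instead forces one to pin down the leading singular behaviour of the $\Phi_i$ and to verify the non-degeneracy of the induced covariance structure directly. A subsidiary technical point is the transfer from the Birkhoff sum $\sum_{k<n}\Phi_i(R_\alpha^k\theta)$ to $\tau_{i,T}(x)=\int_0^T\tau_i\circ K_s\,ds$, which requires controlling the fluctuations of the number of floors $n(x,T)$ in terms of the (singular) Birkhoff sums of the roof $\varphi$.
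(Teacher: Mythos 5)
Your overall architecture matches the paper's: a generalized $(T,T^{-1})$ flow over an analytic Kochergin base with a Weyl chamber flow in the fiber, the CLT reduced (via exponential mixing of all orders and the moment method of \cite{DDKN}) to quantitative path estimates for the cocycle $\tau_T$ over the base, and the base contribution killed by taking the singularity exponent $\gamma<1/2$. The genuine gap is in your item (ii), which is exactly condition $S2$ of the paper and the heart of the construction. You claim the joint anticoncentration bound $\Prob_x(|\tau_r(x)|\le C)\lesssim r^{-N\gamma+o(1)}$ and accordingly impose only $N\gamma>1$, so that three singularities would suffice. Neither the rate nor the mechanism is right. After reducing $\int_0^T\tau_j\circ K_t\,dt$ to the Birkhoff sum $S_n(\bar f_0)(x-c_j)$ of the singular part of the roof, the set where a \emph{single} component is small (even below the far more generous threshold $n^{\gamma^2+\eps}$) is only shown to be covered by about $3n$ intervals of length $n^{-1-\gamma/5}$ (via lower bounds on $S_n(\bar f'')$), so each component buys a factor of roughly $n^{-\gamma/5}$, not $n^{-\gamma}$; moreover the criterion of \cite{DDKN} requires the exceptional set to be $o(T^{-5})$ (not merely integrable in $T$) in order to run the higher moments, which is why the paper needs $s=50/\gamma$, i.e.\ on the order of $100$ components. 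More seriously, your mechanism for making the components ``jointly spread out'' --- invertibility of the matrix of leading singular coefficients of the $\Phi_i$ --- does not yield joint anticoncentration: the functions $S_n(\bar f_0)(\cdot-c_j)$ are translates of one and the same highly oscillating function of $x$, and for bad configurations of the $c_j$ relative to $\alpha$ they can all be small simultaneously on a set as large as the single-component one. The key idea missing from your sketch is to treat the positions of the saddles as random: a Fubini computation shows that on average over $(c_1,\dots,c_s)\in[0,1]^s$ one has $\Leb\bigl(\bigcap_j(A_n+c_j)\bigr)\le \Leb(A_n)^s$, and Markov plus Borel--Cantelli then produce a full-measure set of configurations (the conditions $\cG1$--$\cG3$) for which the joint smallness set has measure at most $n^{-6}$ for all large $n$. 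Without this genericity of the singularity positions the estimate is simply not available.

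Two further points. First, the regime where $N(x,w,T)$ is abnormally small (the orbit is trapped in a fiber over a near-singular base point) needs a separate argument --- the paper shows that some $\int_0^T\tau_j\circ K_t\,dt$ is then of order $\eps^2T$, trivially large --- and your remark about controlling the fluctuations of $n(x,T)$ does not cover it. Second, for the non-vanishing of the variance a pure fiber observable $A=D(y)$ is not obviously sufficient: in the series defining $\sigma^2$ the contribution of intermediate times $t$, where $\tau_t(x)$ may be small on a large set but $S2$ does not yet apply, is uncontrolled; the paper localizes the observable near a point $x_0$ satisfying the separation condition $S3$ precisely to dispose of that range. Finally, note that the paper does not build the analytic cocycle by pinning down the singular asymptotics of analytic $\Phi_i$ directly; it first constructs $C^\infty$ bump-function components $\tau_i$ localized at the saddles and then replaces them by analytic functions agreeing to high order at the saddles, the difference contributing only logarithmic singularities and hence only $O(\ln^4T)$ to the ergodic integrals.
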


The dimension of the manifold on which our examples are constructed depends thus on the number of singularities $N$ that we require for the Kochergin flow. We did not try to optimize this number, but the one we currently have is of order 100.  


In the next section we will define the class of {\it slowly parabolic} systems and recall the criterion given in \cite{DDKN} that establishes the classical CLT for skew products above a slowly parabolic system, provided the fiber dynamics are exponentially mixing of all orders. This part is essentially the same as in \cite{DDKN}. In a nutshell, {\it slowly parabolic flows} are conservative flows for which the deviations of Birkhoff averages are $o(\sqrt{T})$, but for which there exists $d \in \N^*$, and $d$-dimensional observables whose Birkhoff averages deviate, for every $T$, by more than $(\ln T)^2$ outside exceptional sets of measure less  $o(T^{-5})$.  

The novelty of this note is to prove the existence of smooth (in fact real analytic) conservative flows that are slowly parabolic. We actually show that Kochergin flows on the two-torus, with exponent $\gamma\in (0,1/2)$ for the singularities of their ceiling function, and with $N(\gamma)$ singularities, are  slowly parabolic for typical positions of the singularities and the slope of the flow.

The exponent of a singularity of the ceiling function is related to the order of degeneracy of the corresponding saddle point on $\T^2$. Limiting the order of degeneracy of the saddles thus limits the exponents to be strictly less than $1/2$. This is  important to guarantee that  the deviations of the Birkhoff averages above the Kochergin flow to be $o(\sqrt{T})$. 

The trickiest part of the construction will be to show that if the number of saddles is sufficiently large then we can construct a smooth (and even real analytic) observable $\tau\in C^\omega(\T^2,\R^{N})$  whose Birkhoff averages above the Kochergin flow deviate by more than $(\ln T)^2$ outside exceptional sets of measure less  $o(T^{-5})$.

The Diophantine property imposed on  rotation angle
$\a$ plays a crucial role in insuring refined estimates on Birkhoff sums of functions with singularities above the circular rotation of angle $\a$, which in turn can be used to control the Birkhoff sums of observables above the Kochergin flow. Here again, we did not seek to optimize the Diophantine condition but just made sure it is of full measure.

 It turns out that in finite smoothness $C^r$,  certain ergodic rotations on high dimensional tori (the dimension of the torus goes to $\infty$ with $r$) are examples of diffeomorphisms that satisfy the two conditions on the deviations of the Birkhoff averages, in fact they are {\em slowly parabolic}. For this reason, they could be used in \cite{DDKN} to construct examples of CLT diffeomorphisms with zero entropy in finite smoothness.

\section{ CLT for skew-products above slowly parabolic systems}
\label{ScCLT-Par}

In this section, we describe general conditions on the flow $(K_T,X,\mu)$ which will allow us to construct a generalized $(T,T^{-1})$ flow $(F_T)$ as in Definition \ref{def:TTinv} that satisfies the assumptions of Theorem \ref{thm:main2}. 

\begin{definition}\label{def:parflow} Let $(K_T)_{T\in \R}$ be a $C^r$-flow on a manifold $X$ preserving a smooth measure $\mu$. We say that $(K_T)$ is {\em $C^r$--slowly parabolic}
if the following conditions are satisfied:
\begin{enumerate}
\item[$S1$.] for every $H\in C^r(X)$ with $\nu(H)=0$,\; 
$\DS
\frac{1}{\sqrt{T}}\int_0^T H(K_t\cdot )dt \Rightarrow 0,
$
in distribution as $T\to\infty$.
\item[$S2$.] there exist $C,d\in \N$ and a $C^r$ function  ${\btau}=(\tau_1,\ldots,\tau_d):X\to \R^d$, $\nu(\btau)=0$ such that 
$$
\nu\Big(\{x\in X\;:\; \Big\|\int_0^T\btau(K_tx)dt\Big\|<C\ln^2 T  \}\Big)=o(T^{-5}).
$$
\item[$S3$.] there exist $C>0, m <1.1$  and $x_0\in X$ such that for every $\delta>0$ sufficiently small, we have
$\DS
K_tB(x_0,\delta)\cap B(x_0,\delta)=\emptyset
$
for every $|t|\in (C\delta, (C\delta)^{-1/m})$.
\end{enumerate}
\end{definition}



Conditions $S1$ and $S2$ are used to show that the associated $(T,T^{-1})$-flow satisfies the classical CLT (with the possibility that the variance is identically zero). Condition $S3$ 
insures that there  exists a function with non-zero variance.

The following result based on Theorem 3.2 in  \cite{DDKN} reduces the proof of 
Theorems \ref{thm:main2} and \ref{thm:main22} to that of finding smooth (and real analytic) slowly parabolic flows. 

\begin{proposition}\label{prop:1} Assume that $(K_T,X,\zeta)$ is a $C^\omega$-slowly parabolic flow of zero  metric entropy. Let $(G_t,Y,\nu)$ be an analytic $\R^d$ action which is exponentially mixing of all orders. Let 
$$F_T(x,y)=(K_Tx, G_{\btau_T(x)}(y)),$$ where $\btau$ is as in $S2$. Then $(F_T,X\times Y,\mu \times \nu)$ has zero metric entropy and satisfies the classical CLT. Moreover there exists $H\in C^\omega(X\times Y)$ with $\sigma^2(H)\neq 0$.
\end{proposition}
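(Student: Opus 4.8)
\emph{Proof strategy.}
The plan is to deduce the three assertions from the apparatus of \cite{DDKN}; the only genuinely new point is the trivial remark that the skew‑product recipe \eqref{eq:Tcont} preserves analyticity. First, since $(K_T)$, $(G_t)$ and $\btau$ are analytic, $(F_T)$ is an analytic flow on $X\times Y$ preserving $\mu\times\nu$ by Definition \ref{def:TTinv}. The vanishing of the metric entropy is immediate: $(K_T,\mu)$ has zero metric entropy by hypothesis and, by $S2$, each $\tau_i$ has zero $\mu$‑mean, so $\btau_T(x)/T\to 0$ $\mu$‑a.e.\ and hence all Lyapunov exponents of $F_T$ vanish; Ruelle's inequality then gives $h_{\mu\times\nu}(F_T)=0$. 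This is exactly \cite[Lemma 2.1]{DDKN}.

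Next, the classical CLT. Fix $A\in C^\omega(X\times Y)$ (in fact $C^3$ is enough); replacing $A$ by $A-(\mu\times\nu)(A)$ we may assume $(\mu\times\nu)(A)=0$. I would split $A=\bar A+A^\circ$, where $\bar A(x)=\int_Y A(x,y)\,d\nu(y)$ and $A^\circ$ has zero average along every fibre; both are as regular as $A$, and $\mu(\bar A)=0$. Since $\int_0^T A\circ F_s\,ds=\int_0^T\bar A(K_s\cdot)\,ds+\int_0^T A^\circ\circ F_s\,ds$ and, by $S1$, $\frac1{\sqrt T}\int_0^T\bar A(K_s\cdot)\,ds\Rightarrow 0$, it suffices to prove $\frac1{\sqrt T}\int_0^T A^\circ\circ F_s\,ds\Rightarrow\cN(0,\sigma^2(A^\circ))$ for a finite limit, and to set $\sigma^2(A):=\sigma^2(A^\circ)$. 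The natural route is the method of moments: the $k$‑th moment of $\frac1{\sqrt T}\int_0^T A^\circ\circ F_s\,ds$ is, up to the factor $T^{-k/2}$, the integral over $(x,s_1,\dots,s_k)$ of $\int_Y\prod_{j=1}^{k}A^\circ\big(K_{s_j}x,\,G_{\btau_{s_j}(x)}y\big)\,d\nu(y)$, and exponential mixing of all orders for $(G_t,\nu)$ controls that inner integral: it is uniformly bounded and exponentially small in the separations $\|\btau_{s_i}(x)-\btau_{s_j}(x)\|$ unless the points $\btau_{s_1}(x),\dots,\btau_{s_k}(x)$ fall into tight clusters. Condition $S2$ — the set where $\|\btau_R\|<C\ln^2 R$ has $\mu$‑measure $o(R^{-5})$, for every $R\le T$ — is precisely what shows that off a negligible set of $x$ such clusterings cannot survive the limit (the $o(R^{-5})$ beats the polynomial‑in‑$T$ loss from the $k$‑fold time integral and the $C^0$ bounds), so that only pair partitions contribute and one recovers the Gaussian moments; in particular $\sigma^2(A^\circ)=\lim_{T}\frac1T\,\EXP\big[(\int_0^T A^\circ\circ F_s\,ds)^2\big]$. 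All of this is \cite[Theorem 3.2]{DDKN} and transfers without change.

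Finally, one has to exhibit $H\in C^\omega(X\times Y)$ with $\sigma^2(H)>0$, and this is where $S3$ enters. I would take $H(x,y)=\phi(x)\psi(y)$ with $\phi\in C^\omega(X)$ a non‑negative bump supported in $B(x_0,\delta)$ (so $\mu(\phi^2)>0$) and $\psi\in C^\omega(Y)$ with $\nu(\psi)=0<\nu(\psi^2)$; then $(\mu\times\nu)(H)=0$, $H=H^\circ$, and by the variance formula above
\[
\sigma^2(H)=\lim_{T\to\infty}\frac1T\int_X\!\int_{[0,T]^2}\!\phi(K_sx)\,\phi(K_{s'}x)\Big(\int_Y\psi\big(G_{\btau_s(x)}y\big)\,\psi\big(G_{\btau_{s'}(x)}y\big)\,d\nu(y)\Big)\,ds\,ds'\,d\mu(x).
\]
The integrand is supported where $K_sx,K_{s'}x\in B(x_0,\delta)$, and then $S3$ forces $|s-s'|\le C\delta$ or $|s-s'|\ge(C\delta)^{-1/m}$. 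On the near‑diagonal range $\btau_s(x)-\btau_{s'}(x)=O(\delta)$ because $\btau$ is bounded, hence $G_{\btau_s(x)}y$ and $G_{\btau_{s'}(x)}y$ are $O(\delta)$‑close and, by $G_t$‑invariance of $\nu$, the inner integral equals $\nu(\psi^2)+O(\delta)\ge 0$; integrating out, this range yields a contribution of order $\delta\,\mu(\phi^2)\,\nu(\psi^2)>0$ (using $K$‑invariance of $\mu$). On the far range two‑fold exponential mixing bounds the inner integral by $O\big(e^{-c\|\btau_s(x)-\btau_{s'}(x)\|}\big)$, which by $S2$ is super‑polynomially small off an $o(R^{-5})$ set of $x$ at each scale $R=|s-s'|$; since $m<1.1$ gives $4/m>1$, the resulting bound integrates to $o(\delta)$. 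Hence $\sigma^2(H)\ge c_0\,\delta\,\mu(\phi^2)\,\nu(\psi^2)-o(\delta)>0$ for $\delta$ small, and $H\in C^\omega(X\times Y)$, as required.

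The main obstacle — common to the CLT and to the non‑degeneracy — is the quantitative control of the "bad" base points at which the cocycle $\btau_s(x)$ fails to spread, so that fibre mixing produces no decorrelation: one needs that set small enough ($o(T^{-5})$, as postulated in $S2$) to be absorbed against the polynomial losses from the multiple time integrals, and one needs the two time scales in $S3$ correctly calibrated against the mixing rate of $(G_t)$ and the growth supplied by $S2$, which is what the precise bound $m<1.1$ secures. Since exactly this is carried out in \cite[Theorem 3.2]{DDKN}, the proof amounts to quoting that theorem together with the observation that analyticity of $K_T$, $G_t$ and $\btau$ passes to the skew product \eqref{eq:Tcont}.
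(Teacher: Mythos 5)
Your route is the paper's: zero metric entropy and the classical CLT are obtained by quoting \cite[Lemma 2.1]{DDKN} and \cite[Theorem 3.2]{DDKN}, and the non\nobreakdash-degeneracy of the variance is extracted from $S3$ by taking a product observable $\phi(x)\psi(y)$ with $\phi$ concentrated at scale $\delta$ near the point $x_0$ of $S3$ and $\psi$ of zero $\nu$\nobreakdash-mean, then splitting the correlation integral into a near\nobreakdash-diagonal range (main positive term), an intermediate range handled by $S3$, and a far range handled by $S2$ together with exponential mixing of $(G_t)$. This is exactly the scheme of the paper's appendix. Two steps of your variance argument, however, do not work as written. First, there is no nonzero $\phi\in C^\omega(X)$ supported in $B(x_0,\delta)$: an analytic function vanishing on an open subset of a connected manifold vanishes identically. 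Hence your exclusion of the intermediate range $C\delta<|s-s'|<(C\delta)^{-1/m}$ ``because the integrand is supported where $K_sx,K_{s'}x\in B(x_0,\delta)$'' has no literal content. The paper replaces the bump by a periodized Gaussian $\Theta(\cdot,x_0)$ of width $\delta$, which is analytic but only concentrated; on the intermediate range $S3$ then forces at least one of the two factors to be $\le e^{-\delta^{-0.1}}$, so that this range contributes an error $O(e^{-\delta^{-0.1}})$ rather than being empty. Your argument needs the same modification, and the resulting tail errors must be tracked through the near\nobreakdash-diagonal estimate as well.

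Second, your closing inequality $\sigma^2(H)\ge c_0\,\delta\,\mu(\phi^2)\,\nu(\psi^2)-o(\delta)$ does not yield positivity, because $\mu(\phi^2)$ is not bounded below as $\delta\to 0$: for a bump at scale $\delta$ on the two\nobreakdash-dimensional base one has $\mu(\phi^2)\asymp\delta^{2}$, so the main term is of order $\delta^{3}$ and the far\nobreakdash-range error must be shown to be $o(\delta^{3})$, not merely $o(\delta)$. That error is
$O\big(\int_{(C\delta)^{-0.9/m}}^{\infty}t^{-5}\,dt\big)=O\big(\delta^{3.6/m}\big)$,
and the hypothesis $m<1.1$ in $S3$ is precisely what makes $3.6/m>3$; your remark that ``$m<1.1$ gives $4/m>1$'' misses the actual role of this numerical condition. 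Both issues are repaired exactly as in the paper's Lemma \ref{LmVar}, so the gaps are fixable, but as stated the support claim is vacuous for analytic $\phi$ and the final comparison of orders in $\delta$ is insufficient.
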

\begin{proof}
Everything but the last assertion on the non-vanishing of the variance is a direct consequence of Theorem 3.2 in \cite{DDKN}. The fact that the variance is not identically zero follows from $S3$ similarly to the proof of Lemma 8.2 in \cite{DDKN}. For completeness we include the details in the appendix.
\end{proof}

As explained in the introduction a classical example of an analytic $\R^d$ action which is exponentially mixing of all orders is the {\em Weyl chamber flow}.  It remains to find examples of smooth and real analytic slowly parabolic flows. 
 In light of Proposition \ref{prop:1}, Theorem~\ref{thm:main2} becomes an immediate consequence of the following result:
\begin{Main}\label{thm:SP} There exists an analytic conservative flow $(K_T,X,\mu)$ with zero metric entropy  that is a slowly parabolic flow.
\end{Main}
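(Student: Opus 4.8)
## Proof Plan for Theorem~\ref{thm:SP}

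The plan is to exhibit an explicit Kochergin flow $(K_T,\T^2,\mu)$ — a smooth time change of a linear flow on $\T^2$ with an irrational slope $\a$ and finitely many degenerate saddles, equivalently a special flow over the rotation $R_\a$ under an analytic roof $\varphi$ with $N$ power singularities of exponent $\gamma\in(0,1/2)$ — and verify conditions $S1$, $S2$, $S3$ of Definition~\ref{def:parflow} for a full-measure choice of the slope $\a$ and generic positions of the singularities. The analyticity and zero metric entropy are built in: the flow is an analytic time change of a linear flow (which has zero entropy), so $K_T$ has zero entropy, and analyticity of $\varphi$ away from singularities can be arranged by taking $\varphi$ to be an explicit analytic function with the prescribed singularity structure. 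First I would fix the class: roof $\varphi(x)=\sum_{j=1}^N a_j\,|x-x_j|^{-\gamma}$ suitably regularized to be analytic, with $\mu(\varphi)$ normalized, and impose on $\a$ a full-measure Diophantine condition (of the type used throughout the paper) controlling the continued-fraction growth $q_{n+1}\le q_n^{1+o(1)}$ so that Birkhoff sums of functions with power singularities over $R_\a$ obey sharp deviation estimates.

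The verification proceeds condition by condition. For $S1$, the goal is that $\frac{1}{\sqrt T}\int_0^T H(K_t\cdot)\,dt\Rightarrow 0$ for every analytic $H$ with $\mu(H)=0$. This reduces, via the standard special-flow representation, to showing that Birkhoff sums $S_n\Phi$ over $R_\a$ of the induced function $\Phi$ — which has at worst power singularities of exponent $\gamma$ — satisfy $\|S_n\Phi\|$ growing like $n^{\gamma+o(1)}=o(\sqrt n)$ off a small set, uniformly enough to kill the $1/\sqrt T$ normalization; the key input is that $\gamma<1/2$ together with the Diophantine control on $\a$, via Denjoy–Koksma–type arguments and the cancellation estimates for Birkhoff sums of functions with singularities over Diophantine rotations (the same machinery referenced in the introduction for controlling Birkhoff sums over Kochergin flows). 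For $S3$, I would take $x_0$ to be a generic (non-singular, non-periodic) point: a short orbit segment of a smooth time change of a linear flow cannot return close to itself until time of order $\delta^{-1}$ by the minimality and the local product structure of the linear flow, while returns before time $(C\delta)^{-1/m}$ with $m<1.1$ are excluded because the slowing near the saddles only delays, never accelerates, recurrence — quantitatively, the return time to a $\delta$-ball is $\gg \delta^{-1}\gg\delta^{-1/m}$ up to constants for $m$ slightly above $1$. This is the easiest of the three and is essentially a continuity/minimality statement for the linear flow plus a lower bound on the time change.

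The main obstacle — and the novel heart of the construction — is $S2$: producing an analytic vector-valued cocycle $\btau=(\tau_1,\dots,\tau_d):\T^2\to\R^d$ with $\mu(\tau_i)=0$ whose Birkhoff integrals along the flow are bounded by $C\ln^2 T$ only on a set of measure $o(T^{-5})$; equivalently, $\|\int_0^T\btau(K_t x)\,dt\|\ge C\ln^2 T$ for all but an $o(T^{-5})$-fraction of $x$. The strategy is to build each $\tau_i$ so that its Birkhoff integral over the Kochergin flow is, up to controllable errors, a Birkhoff sum over $R_\a$ of a function with a power singularity placed near one of the saddles $x_i$: such sums exhibit large fluctuations (of polynomial size in the relevant scale) precisely because of the singularity, and only a very thin set of $x$ (those whose orbit avoids the singular cusp for a long time — a set whose measure decays polynomially in $T$) escapes the lower bound. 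The reason $d=N$ must be large is that for a single component the exceptional set, while small, is not $o(T^{-5})$; intersecting over many independent-looking coordinates, each keyed to a different saddle and arranged so that avoiding all of them simultaneously is far more restrictive, drives the measure of the common exceptional set below any fixed negative power of $T$ once $N=N(\gamma)$ is large enough. Making the $\tau_i$ genuinely analytic (not merely smooth) while preserving this singular-Birkhoff-sum behavior, and bookkeeping the error terms from the time change and from regularizing the singularities, is the delicate part; the Diophantine condition on $\a$ is exactly what makes the error terms negligible against the $\ln^2 T$ threshold and makes the exceptional-set measure estimate effective. Once $S1$–$S3$ are in hand, Theorem~\ref{thm:SP} follows, and with it Theorem~\ref{thm:main2} via Proposition~\ref{prop:1}.
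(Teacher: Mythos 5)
Your overall plan coincides with the paper's: a Kochergin flow over a Diophantine rotation with singularity exponent $\gamma<1/2$, with $S1$ handled by the $o(\sqrt{T})$ deviation estimates for Birkhoff sums of functions with power singularities, $S3$ by the special-flow representation plus the Diophantine condition, and $S2$ by a cocycle with one component per saddle whose orbital integral reduces to a Birkhoff sum of the singular roof centered at that saddle. The gap is in the heart of $S2$: you assert that intersecting over many ``independent-looking'' coordinates drives the measure of the common exceptional set below any negative power of $T$, but you give no mechanism for this independence, and it is not automatic --- the exceptional sets for the different components are all translates $A_N+c_j$ of one and the same set $A_N=\{x:|S_N(\bar f_0)(x)|\le N^{\gamma^2+\eps}\}$, translated by the saddle positions. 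The paper's argument requires two concrete inputs absent from your plan: (i) a structural description of $A_N$ (Proposition \ref{lemma.delta}: using the lower bound on $S_N(\bar f'')$ coming from the singularity, $A_N$ is covered by $3N$ intervals of length $2N^{-1-\gamma/5}$, hence has measure $O(N^{-\gamma/5})$); and (ii) the exact Fubini identity $\int_{[0,1]^s}\Leb\bigl(\bigcap_{i=1}^s(A+t_i)\bigr)\,dt_1\cdots dt_s=\Leb(A)^s$, which together with Markov's inequality and Borel--Cantelli shows that for almost every choice of the positions $(c_1,\dots,c_s)$ the intersection has measure at most $N^{-6}$ once $s=50/\gamma$. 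Without (i) the intersection argument has nothing to multiply, and without (ii) the claimed independence of the translates is unsubstantiated.

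A second, smaller omission: the reduction of $\int_0^T\tau_j(K_t\bx)\,dt$ to $S_{N}(\bar f_0(\cdot-c_j))(x)$ with $N=N(x,w,T)$ is only valid when the orbit does not spend a macroscopic fraction of $[0,T]$ near a saddle; the paper must treat separately the regime $N(x,w,T)<T^{1-\eps}$ (where one shows directly that some component of the integral is of order $T$), must discretize over the level sets of $N(x,w,T)$ in blocks of length $T^\gamma$ and control the incomplete tail of the Birkhoff sum (this is what forces the genericity conditions $\cG1$--$\cG3$ and the use of $s+3$ rather than $s$ saddles), and must at the end approximate the smooth bump cocycle by an analytic one whose difference contributes only logarithmic errors (Lemma \ref{lem:alcob}). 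These are not mere bookkeeping; the case analysis is where the $o(T^{-5})$ bound over the full phase space, rather than over the transversal, is actually obtained.
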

Theorem \ref{thm:SP} is the main novelty of this work.
 
Existence of $C^\infty$ slowly parabolic diffeomorphisms is an open question. In fact, to the best of our knowledge, the following easier problem is open:
\begin{problem} Construct a $C^\infty$  diffeomorphism $f$ on a smooth compact manifold $X$ preserving a smooth measure $\mu$ such that:
\begin{itemize}
\item[T1.] for every $H\in C^\infty (X)$ with $\mu(H)=0$ we have
$\DS
\frac{1}{\sqrt{N}}\sum_{n\leq N} H(f^n \cdot )dt{ \Rightarrow} 0,
$
in distribution as $T\to\infty$;
\item[T2.] there exists $x\in X$ and $\phi\in C^\infty(X)$ such that 
$\DS
\{\phi_n(x)\}:=\Big\{\sum_{n\leq N} \phi(f^nx)\Big\} $
is unbounded. 
\end{itemize}
\end{problem}

In other words, in all the known smooth examples, whenever there exists a zero average function which is not a coboundary (equivalently T2 holds) then there is a rapid jump in asymptotics of ergodic averages, i.e. they become of order $\sqrt{N}$ or larger. Hence, in light of Katok's conjecture on cohomologically rigid diffeomorphisms, one can ask the following:

{\it Does there exist a $C^\infty$ diffeomorphism $f$ on a smooth compact manifold $X$ preserving a smooth measure $\mu$, not conjugated to a Diophantine torus translation}, for which T1 holds?


 It is interesting to point out that the classical parabolic flows,including horocycle flows and their reparametrizations, nilflows and their reparametrizations, {\bf are not} slowly parabolic. Indeed, it follows from the work of Flaminio-Forni, \cite{Fla-Fo} and \cite{Fla-Fo2} that the deviations of ergodic averages in these examples are, for observables that are not coboundaries, of order at least $\sqrt{T}$ for a positive measure set of points.
Thus property T1 does not hold for those flows.
 Moreover these flows are known not to have a CLT and it is therefore not possible to use them to construct skew products above them that satisfy the classical CLT.



The rest of the paper is devoted to the proof of Theorem \ref{thm:SP}. Our examples will belong to the class of smooth flows on surfaces with degenerate saddles (so called {\em Kochergin flows}). 

For the class of Kochergin flows that we consider all the singularities will be ``weakly''  degenerate, i.e. the strength of the singularity will be $o(x^{-1/2})$. This assumption will relatively easily give us the condition $S1$ for any number of singularities. Condition $S3$ will also be easy to achieve by assuming that the base rotation (the first return map) is Diophantine. The most interesting and also most difficult part is to show existence of $\btau$ satisfying the assumptions of $S2$.

\section{ Construction of slowly parabolic  Kochergin flows in the smooth case} \label{sec.koc}

\subsection{Overview of the construction} 
\label{SSOverview}

We start by defining $C^\infty$ Kochergin flows on $\T^2$. They were introduced by Kochergin in \cite{Koch}.  As explained in \cite{FFK}, the construction can be made analytic.

 The construction in \cite{Koch} is to take a linear flow on $\T^2$ in direction $(\alpha,1)$ and cut out finitely many disjoint disc from the phase space. Inside each such disc one then glues in a {\em Hamiltonian flow} on $\R^2$ with a degenerated singularity at $\bar{c}\in \T^2$ (corresponding to $(0,0)\in \R^2$). Finally one smoothly glues the trajectories of the linear flow with the trajectories of the Hamiltonian flow.  It follows that each such flow preserves a smooth area measure on $\T^2$. Moreover, as shown by Kochergin, such flows are mixing for all irrational $\alpha\in \T$. For more details on the construction we refer the reader to \cite{Koch}. In our case we will cut out finitely many discs centered at 
$\{\bar{c}_i\}_{i=1}^N$ and glue a Hamiltonian flow with a degenerated singularity at $0$ in the discs centered  at points $\{\bar{c}_i\}_{i=1}^N$. From the construction it follows that the set $\cT=\T\times\{0\}$ is a global transversal for the flow (we can WLOG assume that no discs intersects $\cT$) and moreover the first return map is the rotation by $\alpha\in \T$. The roof function $f:\T\to \R_+$ (first return time) is smooth except at the  projections along the flow lines of the points $\{c_i\}_{i=1}^N$ at which it has a power-like singularity with exponent $\gamma \in (0,1)$. 

In what follows, when we write $\{c_i\}_{i=1}^N\subset \cT$, we allow singularities of the smooth flow $(K_t)$ to be anywhere on the unit flow lines of the linear flow in direction $\alpha$, i.e.  $\bar{c_i}=L^{\alpha}_{t}(c_i,0)$ with $0<t_i<1$ and where $(L^\alpha_t)$ denotes the linear flow on $\T^2$ in direction $(\alpha,1)$. 
 In fact we will construct   {\em good} tuples of points $(c_1,\ldots, c_N)$ and then we lift them along the flow as described above.

In particular, every point in $x\in M$ which is not a fixed point can be written as $x=K_w \theta$, where $\theta\in \cT$ and $0\leq w<f(\theta)$. By the construction it follows that 
\begin{equation}\label{fsum}
 f(\cdot)=\sum_{i=1}^N \bar{f}(\cdot-c_i),
\end{equation}
where $c_i\in \cT$ denote the projections of $\bar{c}_i\in \T^2$ along the flow lines. Moreover, as shown in \cite{Koch}, $\bar{f}:\T\to \R_+$ is $C^3$ on $\Tor\setminus\{0\}$, satisfies
  $\int \bar f d Leb = 1$ and 
\begin{equation}\label{eq:asy2}
\lim_{\theta\to 0^+}\frac{\bar{f}''(\theta)}{\theta^{-2-\gamma}}=A=
\lim_{\theta\to 1^-}\frac{\bar{f}''(\theta)}{(1-\theta)^{-2-\gamma}}
\end{equation}
where $A>0$ and $\gamma\in (0,1)$.  In this context Kochergin showed that $\gamma=1/3$ is a possible exponent. For simplicity we will always assume that  \eqref{eq:asy2} holds with
$A=1$. Let us denote $\mathcal{K}(\alpha,\gamma, \{c_i\})$ the set of smooth area preserving flows $(K_t)$ on $\T^2$ for which $R_\alpha$ is the first return map and the corresponding first return time $f$ satisfies \eqref{fsum} where $\bar{f}$ satisfies \eqref{eq:asy2}. In what follows we will always assume that $\gamma<1/2$.

Thus Kochergin flows are isomorphic to special flows defined as follows.
The orbit of  of a point $(\theta,u)$, $\theta \in \T$, $u\in [0,f(\theta))$ under the flow $(K_T)$ for positive time $t$ is given by 
 \begin{equation}\label{eq.special} K_t(\theta, u)=(\theta+n\alpha, u+t-S_n(f)(\theta))\end{equation}
 where $S_n(f)$ is the ergodic sum of $f$ and $n((\theta,u),t)$ is the unique integer such that $0\leq u+t-S_n(f)(\theta)<f(\theta+n\a)$. The orbits for negative times are defined similarly.

\begin{figure}[htb]
 \centering
  \resizebox{!}{4cm}{\includegraphics[angle=0]{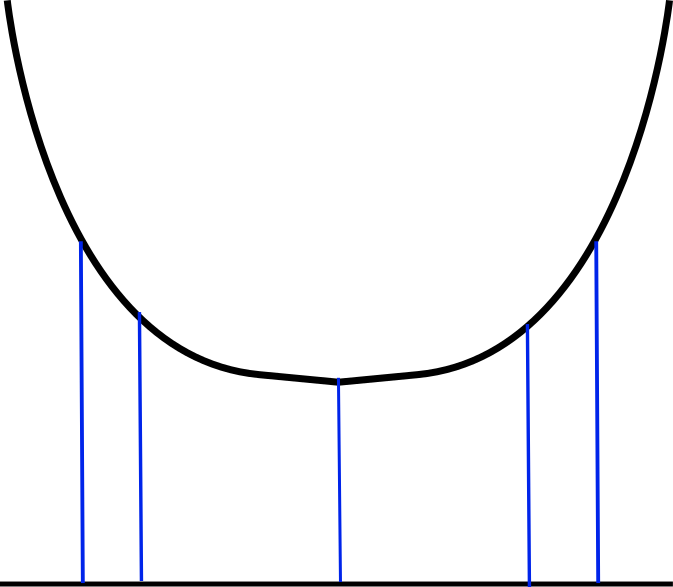}}
\caption{\small  Representation of a $2$-torus flow with one degenerate saddle  as a special flow under a ceiling function with a  power-like singularity.} 
  \label{sym}
\end{figure}

\medskip 

Let $(a_n)$ denote the continued fraction expansion and $(q_n)$ denote the sequence of denominators of $\alpha$, i.e. $q_0=q_1=1$ and 
\begin{equation}\label{eq:qi}
q_{n+1}=a_{n+1}q_n+q_{n-1}.
\end{equation}
Let
\begin{equation}\label{eq:cp}
\cD:=\{\alpha\in \T\;:\;\exists C>0 \; \text{ such that }q_{n+1}<Cq_n \ln^2 q_n \text{ for every }n\in \N\}.
\end{equation}
 The set $\cD$ has full measure by Khintchine's theorem, \cite[Section 13]{Khi}.  The following statement
  gives explicit examples of flows that satisfy Theorem \ref{thm:SP}.
 \begin{Main}\label{prop:sch} For every $\alpha\in \cD$ and every $\gamma<1/2$, there exists $s\in \N$  and a full measure set $\cC\subset [0,1]^s$ such that for every $(c_1,\ldots, c_s)\in \cC$, every $C^r$ flow  in  $\mathcal{K}(\alpha,\gamma, \{c_i\})$ 
 with $r\geq 3$ is $C^r$-slowly parabolic. 
  \end{Main}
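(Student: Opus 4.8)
The strategy is to verify conditions $S1$, $S2$, $S3$ of Definition~\ref{def:parflow} separately for every flow in $\mathcal{K}(\alpha,\gamma,\{c_i\})$ with $\alpha\in\cD$, $\gamma<1/2$, and $(c_i)$ in a suitable full-measure set. Conditions $S1$ and $S3$ are handled first and quickly; condition $S2$, which requires producing an analytic cocycle whose Birkhoff integrals stay large ($\gg\ln^2 T$) off an exceptional set of measure $o(T^{-5})$, is the heart of the argument.

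\textbf{Step 1: condition $S1$.} I would pass to the special-flow representation \eqref{eq.special} over the rotation $R_\alpha$ under the roof $f=\sum\bar f(\cdot-c_i)$ with $\bar f''$ of order $\theta^{-2-\gamma}$, $\gamma<1/2$. For $H\in C^3(M)$ with $\zeta(H)=0$, the time integral $\int_0^T H\circ K_t\,dt$ reduces, via a standard reduction to the transversal, to a Birkhoff sum of the induced observable $\tilde H(\theta)=\int_0^{f(\theta)}H(K_w\theta)\,dw$ over $R_\alpha$, up to a controlled boundary term. Since $\gamma<1/2$, the roof is in $L^{2-}$ more precisely its Birkhoff sums $S_n(f)$ have fluctuations $o(n^{1/2})$ for $\alpha\in\cD$ (this is exactly the classical estimate on Birkhoff sums of functions with power singularity of exponent $<1/2$ over a Diophantine-type rotation; the set $\cD$ is tailored for it). The quantity $a_T$ grows linearly (essentially $T$), and the numerator is $o(T^{1/2})$: one shows $\frac{1}{\sqrt T}\|S_{n(T)}(\tilde H) - n(T)\zeta(H)\cdot(\text{something})\|\to0$ in probability using the sublinear deviation of $S_n(f)$ together with the fact that $n(T)\approx T$. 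Thus $S1$ holds for \emph{any} number of singularities, as the paper indicates.

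\textbf{Step 2: condition $S3$.} Take $x_0$ a point on the transversal $\cT$ whose forward and backward orbit under $R_\alpha$ avoids the singular set, i.e. a non-singular point. The bound $K_t B(x_0,\delta)\cap B(x_0,\delta)=\emptyset$ for $|t|\in(C\delta,(C\delta)^{-1/m})$ with $m<1.1$ is a near-local-non-self-intersection statement for the Kochergin flow. For small $t$ the orbit simply moves ballistically away from $x_0$; for larger $t$ the orbit returns near $x_0$ only after roughly $t/\bar f$-many rotation steps, and one uses the Diophantine/$\cD$ property of $\alpha$ to bound how close $R_\alpha^n x_0$ can come to $x_0$ from below by $c/n$, while the roof singularities only slow the flow logarithmically in time for $\alpha\in\cD$; choosing $m$ slightly above $1$ absorbs the at-most-polynomial time distortion caused by passing near the degenerate saddles (here $\gamma<1/2$ and the growth condition on $q_{n+1}$ keep the distortion controlled). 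This is the analogue of Lemma~8.2 in \cite{DDKN}.

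\textbf{Step 3: condition $S2$ — the main obstacle.} This is where $N=s$ must be taken large, and it is the crux of Theorem~\ref{prop:sch}. The aim is an analytic $\btau=(\tau_1,\dots,\tau_s):\T^2\to\R^s$ with $\mu(\tau_i)=0$ such that $\mu\{x:\|\int_0^T\btau(K_tx)dt\|<C\ln^2T\}=o(T^{-5})$; equivalently the set where \emph{all} coordinates are simultaneously small has measure $o(T^{-5})$. The plan is: (i) reduce $\int_0^T\tau_i\circ K_t\,dt$ to a Birkhoff sum over $R_\alpha$ of an induced function $\widetilde{\tau_i}$ built from the $c_j$'s; by choosing $\tau_i$ cleverly (analytic, but concentrated near the flow line through $c_i$ in the spirit of \eqref{fsum}), arrange that $\widetilde{\tau_i}$ has a \emph{controlled logarithmic or power-type singularity} at $c_i$, so that $S_n(\widetilde{\tau_i})$ exhibits large oscillations governed by how the orbit of $\alpha$ visits a neighbourhood of $c_i$; (ii) use the renewal/return structure of $R_\alpha$ at scale $q_n$: at times $T\asymp q_n$ the Birkhoff sum $S_n(\widetilde{\tau_i})$ is typically of order $\ln^2 q_n$ or larger except for a very small set of starting points — the exceptional set being those $\theta$ whose orbit avoids the relevant neighbourhood of $c_i$, which for a Diophantine-type $\alpha$ has measure exponentially small in the number of returns, hence $\ll q_n^{-5}\asymp T^{-5}$; (iii) crucially, with $s$ coordinates one can make the \emph{simultaneous} smallness event even smaller: because the positions $c_1,\dots,c_s$ are generic (chosen in a full-measure $\cC$), the ``bad'' events for different $i$ are nearly independent in the relevant time windows, so their intersection has measure $o(T^{-5})$ even if a single one only gives $O(T^{-\epsilon})$. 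The genericity of $(c_1,\dots,c_s)\in\cC$ is used exactly to decouple these events and to rule out resonances between the $c_j$ and the orbit of $\alpha$; a Borel–Cantelli / Fubini argument over $[0,1]^s$ then shows the bad parameter set has measure zero. I expect the genuinely hard technical work to be item (ii)–(iii): obtaining the precise lower bounds on $|S_n(\widetilde{\tau_i})|$ off a set of measure $o(q_n^{-5})$ requires delicate estimates on Birkhoff sums of functions with singularities over rotations with denominators growing like $q_{n+1}\le Cq_n\ln^2 q_n$, and then promoting ``large for $T\asymp q_n$'' to ``large for all $T$'' by interpolating across the scales $[q_n,q_{n+1}]$ — this interpolation, combined with the requirement that $\btau$ be genuinely analytic rather than merely smooth, is the most delicate point, and is presumably why the number of singularities ends up of order $100$.
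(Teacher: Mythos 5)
Your overall architecture coincides with the paper's: $S1$ and $S3$ are dispatched quickly ($S1$ by quoting the one-singularity argument of \cite{DDKN} or Theorem 1.1 of \cite{FM}, $S3$ directly from the special-flow coordinates and $\alpha\in\cD$), and $S2$ is the crux. Your items (i) and (iii) of Step 3 are indeed the paper's mechanism: since $\tau_j\equiv 1$ near $\bar c_j$ and $\equiv 0$ near the other saddles, the orbital integral $\int_0^T\tau_j(K_t\bx)\,dt$ equals the Birkhoff sum $S_N\bigl(\bar f_0(\cdot-c_j)\bigr)(x)$ of the singular piece of the roof up to bounded-variation errors, and a Fubini/Markov/Borel--Cantelli argument over the positions $(c_1,\dots,c_s)\in[0,1]^s$ decouples the $s$ smallness events.

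However, item (ii) contains a genuine gap, and it sits exactly where the real work of the paper lies. For a single coordinate, the set $A_N=\{x:|S_N(\bar f_0)(x)|\le N^{\gamma^2+\eps}\}$ is \emph{not} exponentially small in the number of returns, and "orbits avoiding a neighbourhood of $c_i$" is not the mechanism: an orbit segment of length $N$ is roughly $1/N$-dense, so avoidance can never produce an exceptional set of measure $T^{-5}$; smallness of $S_N(\bar f_0)$ comes from cancellation. What the paper actually proves (Proposition \ref{lemma.delta}) is that $A_N$ is covered by $3N$ intervals of length $2N^{-1-\gamma/5}$, hence has measure only $O(N^{-\gamma/5})$ --- polynomially small with a tiny exponent. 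This covering rests on the convexity bound $S_N(\bar f'')\ge N^{2+\gamma}\ln^{-10}N$ (Lemma \ref{lem:DK}), which forces $S_N(\bar f_0)$ to have at most three near-critical spots per interval of the partition by $\{-i\alpha\}_{i<N}$ and to exceed $N^{\gamma^2+\eps}$ at distance $N^{-1-\gamma/5}$ from them. Only with this input does the exact Fubini identity $\int_{[0,1]^s}\Leb\bigl(\bigcap_i(A_N+t_i)\bigr)\,dt=\Leb(A_N)^s$ give $\le N^{-9}$, and it is this computation that dictates $s=50/\gamma$; without a quantitative bound on $\Leb(A_N)$ your appeal to "near independence" proves nothing. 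Two further components are absent from your sketch: (a) the regime $N(x,w,T)<T^{1-\eps}$ (orbit trapped near a saddle), where the reduction to Birkhoff sums is useless and one instead shows directly that some single coordinate integral is of order $\eps^2T$; and (b) the reason for taking $s+3$ rather than $s$ singularities, namely that for each $(\bx,T)$ one must discard up to three coordinates contaminated by the orbit's endpoints lying near a saddle (condition $(\cG3)$); the passage from a fixed return number to all $T$ is then handled by partitioning into windows $N\in[uT^\gamma,(u+1)T^\gamma)$ and controlling the tail of the sum via $(\cG3)$, not by interpolating across the scales $[q_n,q_{n+1}]$.
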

In the above theorem $r$ may be equal to $\infty$ or $\omega$. We also point out that the following stronger result holds:  when $r=\omega$, condition $S1$ will still be satisfied for all $C^3$ functions.
Theorem \ref{prop:sch} is  a direct consequence of the following two results.

\begin{proposition}\label{lem:sch2} For every $s\in \N$, every $(c_1,\ldots, c_s)\in [0,1]^s$, every $\alpha\in \cD$ and every $\gamma<1/2$ every smooth flow in  $\mathcal{K}(\alpha,\gamma, \{c_i\})$ satisfies properties $S1$ and $S3$.
\end{proposition}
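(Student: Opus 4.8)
\emph{Proof strategy.} The plan is to pass to the special--flow representation of a flow $(K_t)\in\mathcal{K}(\alpha,\gamma,\{c_i\})$ over the rotation $R_\alpha$ of $\T$ with roof $f=\sum_{i=1}^s\bar f(\cdot-c_i)$, acting by \eqref{eq.special} on $X_f=\{(\theta,u):\theta\in\T,\ 0\le u<f(\theta)\}$ with the invariant probability measure $\mu=(\Leb_\T\times\Leb_\R)/s$ (here $\int f=s$ since $\int\bar f\,d\Leb=1$), and to derive both conditions from two elementary arithmetic facts about $\alpha\in\cD$. First, by the three--distance theorem and \eqref{eq:cp}, $\|n\alpha\|\gtrsim(n\ln^2 n)^{-1}$ for every $n\ge2$. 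Second — using only that $\alpha$ is irrational, together with $\sum_n(n\ln^2 n)^{-1}<\infty$ and Borel--Cantelli — for each fixed $c\in\T$ and $\Leb$--a.e.\ $\theta$ there is $c(\theta)>0$ with $\dist(\theta+j\alpha,c)\ge c(\theta)(j\ln^2 j)^{-1}$ for all $j\ge2$. Since integrating \eqref{eq:asy2} twice gives $\bar f(\psi)\asymp|\psi|^{-\gamma}$ and $|\bar f'(\psi)|\asymp|\psi|^{-1-\gamma}$ as $\psi\to0$, the second fact already produces, for a.e.\ $\theta$,
\begin{equation}\label{eq:maxroof}
\max_{0\le j<N}f(R_\alpha^j\theta)=O_\theta\big(N^\gamma(\ln N)^{2\gamma}\big)=o(\sqrt N),
\end{equation}
the last step because $\gamma<1/2$.

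\emph{Proof of $S1$.} Given $H\in C^3(X)$ with $\mu(H)=0$, I would set $\tilde H(\theta)=\int_0^{f(\theta)}H(\theta,u)\,du$, so that $\int_\T\tilde H\,d\Leb=s\,\mu(H)=0$ and, near each $c_i$, $|\tilde H(\theta)|\le\|H\|_\infty f(\theta)\lesssim|\theta-c_i|^{-\gamma}$ while $|\tilde H'(\theta)|\lesssim|\theta-c_i|^{-1-\gamma}$ (differentiate under the integral, using $H\in C^3$ and \eqref{eq:asy2}); thus $\tilde H$ has the same singularity profile as $f$, and is $C^3$ off $\{c_i\}$. Reading \eqref{eq.special} off the special flow gives, for every $(\theta,u)\in X_f$ and $T>0$,
$$\int_0^T H\circ K_t(\theta,u)\,dt=S_N(\tilde H)(\theta)+O\!\left(\|H\|_\infty\big(f(\theta)+f(R_\alpha^N\theta)\big)\right),$$
where $N=N((\theta,u),T)$ is the number of crossings of $\cT$, determined by $S_N(f)(\theta)\le u+T<S_{N+1}(f)(\theta)$. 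The remaining steps are: (i) establish the a.e.\ deviation bound $\big|S_N(g)(\theta)-N\!\int g\big|=O_\theta(N^\gamma(\ln N)^{O(1)})=o(\sqrt N)$ whenever $g$ has the above singularity profile (in particular for $g\in\{f,\tilde H\}$), by splitting $g$ into a monotone piece $\asymp|\,\cdot\,|^{-\gamma}$ near each $c_i$ plus a bounded--variation remainder, applying Denjoy--Koksma along the Ostrowski expansion of $N$ (which has $\lesssim\ln^3 N$ blocks since $a_{k+1}\lesssim\ln^2 q_k\lesssim\ln^2 N$ for $\alpha\in\cD$), and bounding each block's contribution from the singular piece by the a.e.\ distance estimate; (ii) deduce from (i) with $g=f$ and $f\ge c_0>0$ that $N((\theta,u),T)=T/s+o(\sqrt T)$ a.e.; (iii) prove the maximal version of (i), namely $\max_{a\le 2T/s}\max_{k\le L}|S_{a+k}(\tilde H)(\theta)-S_a(\tilde H)(\theta)|=o(\sqrt T)$ for $L=o(\sqrt T)$, so that $S_N(\tilde H)(\theta)=S_{\lfloor T/s\rfloor}(\tilde H)(\theta)+o(\sqrt T)$. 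Combining (i)--(iii) with \eqref{eq:maxroof} and $f(\theta)=O_\theta(1)$ yields $\frac1{\sqrt T}\int_0^T H\circ K_t(\theta,u)\,dt\to0$ for a.e.\ $(\theta,u)$, hence in $\mu$--distribution. Since only $H\in C^3$ was used, this also gives the refinement stated after Theorem~\ref{prop:sch}.

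\emph{Proof of $S3$.} I would fix any $m\in(1,1.1)$ and choose $x_0=(\theta_0,u_0)$ with $0<u_0<f(\theta_0)$ whose forward and backward $K$--orbits miss the finitely many singular leaves; on a fixed ball $B(x_0,\delta_0)$ the coordinates $(\theta,u)$ are then bi-Lipschitz to the ambient metric and $f\ge c_0>0$. Suppose $x=(\theta,u)\in B(x_0,\delta)$ and $K_t x\in B(x_0,\delta)$ with $\delta<\delta_0$ and $t>0$, and let $n$ count the crossings of $\cT$ between them. Comparing first coordinates forces $\|n\alpha\|\le2C_1\delta$, and \eqref{eq.special} gives $t\ge S_n(f)(\theta)-O(1)\ge c_0n-O(1)$; if moreover $t>C\delta:=2C_1\delta$, then $n\ge1$ (a crossing-free recurrence forces $t\le2C_1\delta$). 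Now $\|n\alpha\|\le2C_1\delta$ together with $\|n\alpha\|\gtrsim(n\ln^2 n)^{-1}$ forces $n\gtrsim_\alpha(\delta\ln^2(1/\delta))^{-1}$, whence
$$t\ \gtrsim_\alpha\ \frac{1}{\delta\ln^2(1/\delta)}\ \gg\ (C\delta)^{-1/m}\qquad(\delta\to0),$$
the last inequality because $1/m<1$. Hence for all sufficiently small $\delta$ there is no recurrence with $t\in(C\delta,(C\delta)^{-1/m})$, and the case $t<0$ follows by applying the above to $K_t x$ in place of $x$. This is $S3$.

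\emph{Main obstacle.} Because $\gamma<1/2$, the only substantial point is step (i) of the $S1$ argument (together with its maximal version (iii)): the $o(\sqrt N)$ deviation bound for Birkhoff sums over $R_\alpha$ of functions with a single $\gamma$--power singularity. This is where the Diophantine hypothesis $\cD$ genuinely enters (to control the number of Ostrowski blocks), and it should be routine precisely because the resulting exponent $\gamma$ sits strictly below the critical value $1/2$; much sharper forms of the same circle of estimates will be what is needed for the harder condition $S2$.
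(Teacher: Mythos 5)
Your argument for $S3$ is correct and is the same as the paper's (which is stated even more tersely): in the special-flow coordinates, either no crossing of $\cT$ occurs and the second coordinate separates the points, or $n\ge1$ crossings occur and $\|n\alpha\|\gtrsim (n\ln^2 n)^{-1}$ for $\alpha\in\cD$ forces $n$, hence $t$, to exceed $(C\delta)^{-1/m}$. For $S1$ the underlying mathematics is also the same, but the paper simply cites it: it invokes the single-singularity proof from \cite{DDKN} (``works with minor changes'') or Theorem~1.1 of \cite{FM}, whereas you sketch the direct route through the cohomological reduction $\int_0^T H\circ K_t\,dt \approx S_N(\tilde H)(\theta)$ and a Denjoy--Koksma/Ostrowski deviation bound for functions with $\gamma$-power singularities. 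That bound is precisely Proposition~5.2 of \cite{CW}, which the paper itself uses elsewhere (Lemma~\ref{lem:DK0}), so your ``main obstacle'' is a known input rather than new work. The one step you pass over too quickly is the claim that $\tilde H(\theta)=\int_0^{f(\theta)}H(K_u(\theta,0))\,du$ satisfies $|\tilde H'(\theta)|\lesssim|\theta-c_i|^{-1-\gamma}$ ``by differentiating under the integral'': the integrand involves $\partial_\theta K_u(\theta,0)$, which is large near the degenerate saddle because of shear, so establishing that $\tilde H$ decomposes as (monotone with $|\cdot|^{-\gamma}$ singularity) plus bounded variation — which is what the Ostrowski block argument actually needs, not just a pointwise bound on $|\tilde H|$ — requires the analysis of orbital integrals near the saddle as in Theorem~4.1 of \cite{FM} (used by the paper in Section~\ref{sec.analytic}). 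With that input supplied, your proof is complete and, modulo citation versus re-derivation, coincides with the paper's.
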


\begin{proposition}\label{prop:sch2}
For every $\alpha\in \cD$ and every $\gamma<1/2$  there exists $s\in \N$  and a full measure set $\cC\subset [0,1]^s$ such that for every $(c_1,\ldots, c_s)\in \cC$, every smooth flow in  $\mathcal{K}(\alpha,\gamma, \{c_i\})$ satisfies $S2$.
\end{proposition}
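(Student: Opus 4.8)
The plan is to build $\btau=(\tau_1,\dots,\tau_s)$ as a sum of analytic functions concentrated near the projections of the singular points $c_i$, so that the Birkhoff integral $\int_0^T\btau(K_tx)dt$ for the Kochergin flow translates, via the special-flow representation over the rotation $R_\alpha$, into Birkhoff sums of functions with power singularities over the rotation by $\alpha$. The key mechanism is that, because $\gamma<1/2$, the roof function $f$ and its truncated variants have Birkhoff sums $S_n(f)$ whose fluctuations are well-controlled by the Diophantine condition $\alpha\in\cD$; one can then choose $\btau$ of the form $\tau_j(\theta,u)=g_j(\theta)$ (times a cutoff in $u$, or more precisely a coboundary correction so that it descends to the transversal) so that $\int_0^T\tau_j(K_tx)dt$ is, up to bounded error, $S_{n(T,x)}(h_j)(\theta)$ for $h_j$ a function on $\T$ with a single power singularity of exponent close to $\gamma$ located at $c_j$. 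The task $S2$ then reduces to: choose $s$ large and $(c_1,\dots,c_s)$ in a full-measure set so that $\big\|\big(S_n(h_1)(\theta),\dots,S_n(h_s)(\theta)\big)\big\|\ge C\ln^2 n$ outside a set of $\theta$'s (and scales $n$) of measure $o(n^{-5})$.

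The main steps, in order, are as follows. First, reduce the flow estimate to a rotation estimate: using the special flow formula \eqref{eq.special} and the time change $t\leftrightarrow n$, show that for $x=K_w\theta$ and $T$ large, $n(T,x)\asymp T$ on the bulk of phase space (here $\gamma<1/2$ and the integrability of $f$ enter), and that $\int_0^T\btau(K_tx)dt$ equals $\bH_{n(T,x)}(\theta):=\big(S_n(h_1)(\theta),\dots\big)$ plus an $O(1)$ boundary term, provided $\btau$ is chosen as the appropriate coboundary-adjusted lift of $(h_1,\dots,h_s)$ and $\mu(\tau_i)=0$ is arranged. Second, fix the building block: take $h_j(\theta)=\psi(\theta-c_j)$ where $\psi$ is a fixed mean-zero function on $\T$ that is analytic off $0$ with $\psi(\theta)\sim |\theta|^{-\gamma'}\,\mathrm{sgn}(\theta)$ (a signed, or asymmetric, power singularity) for a suitable $\gamma'\in(0,1/2)$, so that the partial sums $S_n(h_j)(\theta)$ exhibit the one-sided logarithmic-in-$q_n$ growth near the orbit of $c_j$ that is characteristic of Birkhoff sums of power-singular functions over Diophantine rotations — this is where the Diophantine control $q_{n+1}<Cq_n\ln^2 q_n$ is used to both produce lower bounds of size $\ln^2 n$ and to bound the exceptional set. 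Third, the genuinely combinatorial/number-theoretic core: show that for a full-measure choice of $(c_1,\dots,c_s)\in[0,1]^s$ and $s=s(\gamma)$ large enough, the "bad" event $\{\theta:\ \|\bH_n(\theta)\|<C\ln^2 n\ \text{for infinitely many dyadic }n\}$, or more precisely $\nu(\{\|\bH_n\|<C\ln^2 n\})$, is $o(n^{-5})$. The idea is that each coordinate $S_n(h_j)$ is small only when $\theta$ avoids being too close to the $R_\alpha$-orbit of $c_j$ up to time $n$, i.e.\ on a set of controlled measure; by choosing the $c_j$ in generic (rationally independent, well-separated from $\alpha$'s convergents) position one makes these $s$ conditions "independent enough" that their simultaneous occurrence has measure decaying like a power that can be made smaller than $n^{-5}$ by enlarging $s$. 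A Borel–Cantelli / Fubini argument over the parameter space $[0,1]^s$ then yields the full-measure set $\cC$.

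The main obstacle is the third step — quantifying and then beating the exceptional set. The difficulty is that the smallness of $S_n(h_j)(\theta)$ is not independent across $j$ (all coordinates see the same $\theta$ and the same rotation), and the cancellation in a single $S_n(h_j)$ can be large when $\theta$ lands in a delicate position relative to several convergents $q_k\le n$ simultaneously; one must show that imposing a few more singularities at generic positions genuinely shrinks the joint exceptional set at a polynomial rate, which requires careful use of three-distance/continued-fraction combinatorics together with the $\cD$-condition to localize where each $S_n(h_j)$ can be small. A secondary technical point is arranging $\btau$ to be genuinely analytic on $\T^2$ (not just on the transversal) while still reducing cleanly to $S_n(h_j)$ — this is handled as in \cite{DDKN}/\cite{FFK} by a smoothing/coboundary construction that transfers a function on the transversal to an analytic function on the Kochergin surface, and by absorbing the resulting correction terms into the $O(1)$ error since $\gamma<1/2$. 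Once these are in place, Proposition \ref{prop:sch2} follows, and with Proposition \ref{lem:sch2} it gives Theorem \ref{prop:sch}.
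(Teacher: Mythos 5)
Your overall architecture --- pass to the special-flow representation, reduce the orbital integral of $\tau_j$ to Birkhoff sums over $R_\alpha$ of a function with a power singularity at $c_j$, show that the joint smallness set decays polynomially, and run a Fubini/Borel--Cantelli argument over $(c_1,\dots,c_s)\in[0,1]^s$ --- is the same as the paper's, and your Fubini step over the parameter space is exactly the paper's proof of condition $(\cG2)$ (the identity $\int\Leb\bigl(\bigcap_j(A+t_j)\bigr)\,dt=\Leb(A)^s$, followed by Markov and Borel--Cantelli). However, there are two genuine gaps. First, the mechanism you offer for why a single coordinate $S_n(h_j)(\theta)$ is rarely small is backwards: you say it is small ``only when $\theta$ avoids being too close to the $R_\alpha$-orbit of $c_j$'', but that avoidance set has measure close to $1$, so raising its measure to the power $s$ gains nothing. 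What is actually needed, and what the paper proves (Proposition \ref{lemma.delta}), is that the set $A_N=\{|S_N(\bar f_0)|\le N^{\gamma^2+\eps}\}$ is covered by $O(N)$ intervals of length $N^{-1-\gamma/5}$, hence has measure $O(N^{-\gamma/5})$; this rests on a convexity argument ($S_N(\bar f'')\ge N^{2+\gamma}\ln^{-10}N$ on each interval of the partition by $\{-i\alpha\}$, so the sum can be small only near a bounded number of near-critical points per interval). Without a bound on $\Leb(A_N)$ by a negative power of $N$, the $s$-fold product cannot reach $o(T^{-5})$ no matter how large $s$ is, and your proposed signed singularity with ``one-sided logarithmic-in-$q_n$ growth'' does not supply one.

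Second, you discard the points where $n(T,x)\not\asymp T$ as a negligible ``non-bulk'', but the set $\{N(x,w,T)<T^{1-\eps}\}$ --- orbits trapped for a time comparable to $T$ near a degenerate saddle --- has measure that is only polynomially small in $T$ (for $\gamma$ near $1/2$ it is roughly of order $T^{-1}$), which vastly exceeds the $o(T^{-5})$ budget allowed by $S2$. The paper must therefore prove separately (Proposition \ref{prop.Nsmall}, with several sub-cases tracking where the orbit is trapped) that on this set one of the coordinates $\int_0^T\tau_j(K_t\mathbf{x})\,dt$ is in fact of order $T$; this is precisely why $\tau_j$ is chosen to equal $1$ on a two-dimensional neighborhood of the saddle $\bar c_j$ rather than being a function of the base coordinate alone. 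A related omission: your measure estimate is stated for a fixed index $n$, whereas $N(x,w,T)$ varies with $(x,w)$ at fixed $T$; the paper handles this by partitioning into level sets $W(u)$ of width $T^\gamma$ in the return number and imposing the extra separation condition $(\cG3)$ on the singularities (the reason it works with $s+3$ of them) to show that the last $O(T^\gamma)$ terms of the Birkhoff sum are of lower order.
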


We will now prove Proposition \ref{lem:sch2}.
\begin{proof}[Proof of Proposition \ref{lem:sch2}] We first show $S3$. we consider the representation of $K_t$ as a special flow over a rotation (and this identification is smooth away from fixed points). We have 
 $$ K_t(\theta, u)=(\theta+n\alpha, u+t-S_n(f)(\theta)) $$
 for some $|n|\leq \brC |t|$ where $S_n(f)$ is the ergodic sum of $f$. Let $x_0=(\theta,u)$ be any point which is not a fixed point of the flow. 
 If $n=0$,  then $S3$  holds since the second coordinates differ by at least $C\delta$, (as the vector field is positive since $x_0$ is not a fixed point). 
 If $n\neq 0$ then the first coordinates differ by at least $\delta$ since $\alpha\in \cD$. 
 
 Property $S1$ was proven in \cite{DDKN} in case there is just one singularity (i.e. $N=1$). However the proof works with minor changes for multiple singularities. Alternatively, one can also use Theorem 1.1. in \cite{FM} in the context of surfaces of genus $1$. In this case the asymptotic growth in $L^1$ is given in formula  (1.5) of \cite{FM} ({Note that $b(\sigma,|\alpha|)$ appearing in RHS of that formula
 equals to  $\gamma<\frac{1}{2}$ for the case considered in our paper}).
\end{proof}

The proof of Proposition \ref{prop:sch2} is the most important part of the paper. The rest of  Section \ref{sec.koc} as well as Section \ref{ScTechnical} is devoted to its proof. 

In \S \ref{sec.preliminaries}, we give useful estimates on ergodic sums of functions with singularities above circular rotations of frequency $\alpha \in \cD$. 

In \S \ref{sec.singularities}, we give the criteria along which the singularities should be selected.  We state Proposition \ref{prop.choice.c} that says that, under the condition $\alpha \in \cD$,  the typical choice of the singularities satisfies the criteria. 

In \S \ref{sec.tau.choice}, we explain how the cocycle $\tau$ must be chosen. We state the main part of the proof, Proposition \ref{prop.choice.tau} that says that the chosen cocycle satisfies $S2$.

In \S \ref{sec.c.choice}, we prove Proposition \ref{prop.choice.c} and in \S \ref{SSprop.choice.tau} we prove Proposition \ref{prop.choice.tau}. 

Finally, in Section \ref{sec.analytic} we see how the construction can be extended to the real analytic setting.  

In the appendix, following \cite[Lemma 8.2]{DDKN}, we show how $S3$ can be used to construct real analytic observable above $(F_T,M,\zeta)$ with non zero asymptotic variance.

\subsection{Preliminary estimates on ergodic sums of functions with singularities} \label{sec.preliminaries}


\begin{lemma}\label{lem:DK0}Let $\bar{f}$ be as in \eqref{eq:asy2}. Then for every $N\in \N$ and every $x\in \T$
$$
|S_N(\bar{f})(x)- N\int_\T \bar{f}d Leb-\bar{f}(x_{min,N})|={\rm O}\Big(N^\gamma\ln^{5} N\Big),
$$
where $x_{min,N}=\DS \min_{0\leq j<N} \|x+j\alpha\|$.
\end{lemma}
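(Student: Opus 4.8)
The plan is to estimate the Birkhoff sum $S_N(\bar f)(x) = \sum_{j=0}^{N-1}\bar f(x+j\alpha)$ by splitting the orbit $\{x+j\alpha : 0\le j < N\}$ according to how close each point is to the singularity at $0$ (equivalently, to $1$). The term $\bar f(x_{min,N})$ appearing on the left-hand side is precisely the contribution of the single closest visit to the singularity, which is not controlled by the $L^2$/bounded-variation machinery and must be subtracted off by hand; everything else should be $O(N^\gamma \ln^5 N)$.

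First I would write $\bar f = g + h$ where $g$ is a fixed $C^2$ (or $C^3$) function on $\T$ agreeing with $\bar f$ away from a neighborhood of $0$, and $h$ is supported near $0$ and near $1$ with the prescribed power-like behavior: from \eqref{eq:asy2}, $\bar f''(\theta) \sim \theta^{-2-\gamma}$ as $\theta\to 0^+$, so $h(\theta)$ behaves like $\theta^{-\gamma}$ up to constants (and a symmetric statement at $1$), with $\int h\, d\mathrm{Leb}$ finite since $\gamma < 1$. For the smooth part $g$, the standard Denjoy–Koksma-type estimate under the Diophantine condition $\alpha\in\cD$ (i.e. $q_{n+1} < C q_n \ln^2 q_n$) gives $|S_N(g)(x) - N\int g| = O(\ln^2 N \cdot \mathrm{Var}(g)) = O(\ln^2 N)$, which is absorbed into the error term. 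So the whole problem reduces to the singular part: I must show
\[
\Big| S_N(h)(x) - N\!\int_\T h\, d\mathrm{Leb} - \bar f(x_{min,N}) \Big| = O(N^\gamma \ln^5 N),
\]
and here I can replace $\bar f(x_{min,N})$ by $h(x_{min,N})$ up to an $O(1)$ error since $g$ is bounded.

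For the singular part, the key tool is the three-distance theorem / the structure of $\{j\alpha \bmod 1\}$ organized by continued-fraction denominators $q_n$. Splitting $N$ along a tower of the $q_n$'s up to the largest $q_n \le N$, on each block of length roughly $q_n$ the orbit points are $\sim 1/q_n$-separated, so the sum of $h$ over such a block — \emph{excluding} the single point closest to the singularity — is $\sum_{k} (k/q_n)^{-\gamma}\cdot \tfrac1{q_n} = O(q_n^{\gamma})$ by comparison with $\int_0^1 t^{-\gamma}\,dt$, while the closest point alone contributes the genuinely large (and uncontrollable in a block-by-block way) term. Summing the block errors $\sum_{q_n \le N} O(q_n^\gamma)$ is a geometric-type sum dominated by its last term $O(N^\gamma)$, and the number of blocks is $O(\ln N)$; the Diophantine condition $q_{n+1} \le C q_n \ln^2 q_n$ costs at most a few extra powers of $\ln N$ when one passes from an exact $q_n$-tower to the actual number $N$ and handles the incomplete top block (whose closest-approach contribution is dominated by the global closest approach $h(x_{min,N})$). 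Collecting: the $N\int h$ term is the sum of the block averages, the leftover after removing one closest point per block is $O(N^\gamma \ln N)$, and the discrepancy between "one closest point per block" and "the single global closest point $x_{min,N}$" is where the remaining $\ln$ powers (up to $\ln^5 N$) are spent — each of the $O(\ln N)$ near-misses that is the closest in its own block but not globally closest is itself within distance $\gtrsim 1/(q_n \ln^2 q_n)$ or so of the singularity by the Diophantine bound, contributing $O((q_n\ln^2 q_n)^\gamma)$ each, again summing to $O(N^\gamma \ln^{O(1)} N)$.

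The main obstacle is bookkeeping the closest-approach terms carefully: proving that exactly one term — the global minimum $\bar f(x_{min,N})$ — must be pulled out, and that all the "secondary" close approaches (one per scale $q_n$) are quantitatively bounded by the Diophantine condition so that their total is still $O(N^\gamma \ln^5 N)$ rather than something larger. This is the standard but delicate heart of Denjoy–Koksma estimates for roof functions with a power singularity; I would expect the argument to mirror closely the corresponding estimates in \cite{DDKN} (and analogous computations in the Kochergin-flow literature), with the factor $\ln^5 N$ arising from a small fixed power of the $\ln^2$ in the definition of $\cD$ combined with the $O(\ln N)$ number of scales.
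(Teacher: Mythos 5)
Your outline is sound, but it does considerably more work than the paper, whose proof of Lemma \ref{lem:DK0} is essentially a two-sentence citation: it writes $\bar f=g+h$ with $g$ monotone and satisfying \eqref{eq:asy2} and $h$ of bounded variation, quotes \cite[Proposition 5.2]{CW} for the monotone singular part (that proposition is precisely the statement that the ergodic sum minus $N\int\bar f$ minus the closest visit is $O(N^\gamma\ln^5N)$ for monotone observables with a power singularity, given $\alpha\in\cD$), and invokes the Denjoy--Koksma/Ostrowski estimate (Lemma \ref{LmSumBV}) for the BV part, which contributes only $O(\ln^4N)$. What you have written is in effect a reconstruction of the proof behind \cite[Proposition 5.2]{CW}: the block decomposition along the Ostrowski expansion $N=\sum_k b_kq_k$, the $O(q_n^\gamma)$ Riemann-sum error per block after removing that block's closest visit, and the control of the secondary block minima. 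Your argument is essentially correct, with two bookkeeping points worth tightening. First, the number of blocks is $O(\ln N\cdot\ln^2N)$ rather than $O(\ln N)$, since each scale $q_k$ occurs with multiplicity $b_k\le a_k=O(\ln^2 q_k)$ under $\cD$; this only costs extra logarithms. Second, the reason a block minimum other than the global one cannot be too close to the singularity is not directly a Diophantine bound at that block's own scale, but the mutual separation of distinct orbit points: for $0<|j-j'|<N$ one has $\|(j-j')\alpha\|\ge\|q_m\alpha\|\ge c/q_{m+1}\ge c/(N\ln^2N)$ (the last step using $\alpha\in\cD$), so at most one of the $N$ points can lie within $c/(2N\ln^2N)$ of $0$, and every secondary minimum contributes at most $O(N^\gamma\ln^{2\gamma}N)$; multiplied by the $O(\ln^3N)$ blocks this still lands within the $O(N^\gamma\ln^5N)$ budget. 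The trade-off between the two routes is clear: the paper's citation is short but opaque, while your self-contained version makes visible exactly where each power of $\ln N$ comes from.
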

\begin{proof} This follows from the assumptions on $\alpha$ and from Proposition 5.2. in \cite{CW}. 
In fact  \cite[Propositon 5.2]{CW} is proven for monotone observables,
but by our assumptions \eqref{eq:asy2} we can write $\bar{f}=g+h$ where $g$ is monotone and satisfies \eqref{eq:asy2} and $h$ is of bounded variation. 
 The ergodic sums of $g$ are ${\rm O}\Big(N^\gamma\ln^{5} N\Big)$ by \cite{CW}
while the ergodic sums of $h$ are $O(\ln^4 N)$ by Lemma \ref{LmSumBV}
{from Appendix \ref{AppO}}.
\end{proof}

We want to describe the set where ergodic sums of the function $\bar{f}$ satisfying \eqref{eq:asy2} are small. For small enough $\eps>0$ let
\begin{equation}\label{eq:an0}
A_N:=\{x\in \T\;:\; |S_N(\bar{f}_0)(x)|\leq N^{\gamma^2+\eps}\},
\end{equation}
where $\bar{f}_0=\bar{f}-\int_\T \bar{f} d Leb$.

The rest of this section is devoted to the proof of the following :
\begin{proposition} \label{lemma.delta} Let $\delta_N=\frac{1}{N^{1+\gamma/5}}$. There are $a_1,\ldots a_{3N}\subset \T$ such that 
\begin{equation}\label{eq:an}
A_N\subset \bigcup_{i=1}^{3N}(-\delta_N+a_i,a_i+\delta_N)
\end{equation}
holds for every sufficiently large $N\in \N$.
\end{proposition}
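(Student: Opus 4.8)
The plan is to locate the set $A_N$ where the ergodic sum $S_N(\bar f_0)$ is small by exploiting the asymptotic from Lemma \ref{lem:DK0}, which says that $S_N(\bar f)(x) = N\int\bar f\,dLeb + \bar f(x_{min,N}) + O(N^\gamma \ln^5 N)$, hence $S_N(\bar f_0)(x) = \bar f(x_{min,N}) + O(N^\gamma\ln^5 N)$. Since $\gamma < 1/2$ we have $\gamma < \gamma^2 + \eps$ is \emph{false} in general, so I must be a little careful: the relevant comparison is that $\bar f$ near its singularity behaves like $\theta^{-\gamma}$ (integrating the asymptotic \eqref{eq:asy2} twice), so $\bar f(x_{min,N})$ is \emph{large}, of order $x_{min,N}^{-\gamma}$, unless $x_{min,N}$ is not too small. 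The point is: for $x \in A_N$, we need $\bar f(x_{min,N}) = O(N^{\gamma^2+\eps}) + O(N^\gamma \ln^5 N) = O(N^\gamma \ln^5 N)$ (absorbing, since $\gamma^2 + \eps < \gamma$ for small $\eps$). From the power-law lower bound $\bar f(\theta) \gtrsim \|\theta\|^{-\gamma}$ this forces $x_{min,N}^{-\gamma} \lesssim N^\gamma \ln^5 N$, i.e. $x_{min,N} \gtrsim N^{-1}(\ln N)^{-5/\gamma}$. So membership in $A_N$ requires that the orbit segment $\{x + j\alpha : 0 \le j < N\}$ stays at distance $\gtrsim N^{-1}(\ln N)^{-5/\gamma}$ from $0$.

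First I would make the above precise: integrate \eqref{eq:asy2} to get two-sided bounds $c_1\|\theta\|^{-\gamma} \le \bar f(\theta) \le c_2 \|\theta\|^{-\gamma}$ for $\theta$ near $0$ (plus a bounded term), and record the resulting implication $x \in A_N \implies x_{min,N} \ge \rho_N := c\, N^{-1}(\ln N)^{-5/\gamma}$ for a suitable constant $c$ and all large $N$. Next, the set of $x$ with $x_{min,N} \ge \rho_N$ is exactly $\bigcap_{j=0}^{N-1} R_\alpha^{-j}\big(\T \setminus (-\rho_N, \rho_N)\big)$, whose complement is $\bigcup_{j=0}^{N-1}(-\rho_N - j\alpha, \rho_N - j\alpha)$ — a union of $N$ arcs of length $2\rho_N$. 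That already gives a covering of $\T \setminus A_N$, but we want a covering of $A_N$ itself by $3N$ arcs of length $2\delta_N$ with $\delta_N = N^{-1-\gamma/5}$, which is much \emph{shorter} than $\rho_N$. So the complement description is the wrong direction; instead I use it to bound the number of connected components of $A_N$.

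The key combinatorial step: $A_N$ is contained in the set $\{x_{min,N} \ge \rho_N\}$, which is a finite union of closed arcs — the "gaps" left between the $N$ removed arcs centered at $-j\alpha$, $0 \le j < N$. The $N$ points $\{-j\alpha\}_{j=0}^{N-1}$ partition $\T$ into $N$ intervals (by the three-gap theorem, of at most three distinct lengths), and removing a $\rho_N$-neighborhood of each of these $N$ points leaves at most $N$ arcs. Hence $A_N$ has at most $N$ connected components. Now within each such component, I claim $A_N$ is actually \emph{much} thinner than the component: on a gap interval $I$ not containing $0$ in its $\rho_N$-interior, the function $x \mapsto S_N(\bar f_0)(x)$ varies a lot. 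Concretely, the closest return $x_{min,N}$ is realized by a specific index $j^* = j^*(x)$, and on the sub-interval where $j^*$ is constant, $S_N(\bar f_0)(x) = \bar f(x + j^*\alpha) + (\text{slowly varying, } O(N^\gamma\ln^5 N))$; since $\bar f$ has derivative of size $\|\theta\|^{-1-\gamma}$ near $0$, which is $\gtrsim \rho_N^{-1-\gamma} \sim N^{1+\gamma}(\ln N)^{\text{stuff}}$, the sum $S_N(\bar f_0)$ moves through an interval of length $N^{\gamma^2+\eps}$ only over an $x$-interval of length at most $N^{\gamma^2+\eps} \cdot \rho_N^{1+\gamma} \ll N^{-1-\gamma/5} = \delta_N$ (checking exponents: $\gamma^2 + \eps - (1+\gamma) < -1 - \gamma/5$ for small $\eps$ since $\gamma^2 - \gamma < -\gamma/5$, i.e. $\gamma < 4/5$, true as $\gamma < 1/2$). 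By the three-gap structure there are at most $3$ distinct gap lengths and on each gap at most one or two values of $j^*$ are "critical" (those $j$ with $-j\alpha$ adjacent to the gap), so $A_N$ meets each of the $\le N$ components in at most $3$ sub-arcs of length $\le \delta_N$; taking the $a_i$ to be the centers (or left endpoints) of these $\le 3N$ sub-arcs gives \eqref{eq:an}.

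\textbf{Main obstacle.} The delicate point is the monotonicity/derivative control: $\bar f$ is only $C^3$ away from $0$ and is a sum of a monotone piece $g$ and a bounded-variation piece $h$ (as in the proof of Lemma \ref{lem:DK0}), so "$S_N(\bar f_0)$ moves monotonically and fast" on the relevant sub-intervals is not literally true. What is true is that the \emph{dominant} term $g(x + j^*\alpha)$ is monotone with large derivative there, while everything else — the BV part, and the contribution of the other $N-1$ terms in the Birkhoff sum — is controlled in total variation by $O(N^\gamma \ln^5 N) \ll N^{\gamma^2+\eps}$... wait, that inequality goes the wrong way, so one cannot simply say the error is negligible compared to $N^{\gamma^2+\eps}$. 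The correct fix is that the error is negligible compared to the \emph{oscillation of $g$ over a $\delta_N$-arc at distance $\rho_N$ from $0$}, which is $\sim \delta_N \rho_N^{-1-\gamma} \gg N^{\gamma^2+\eps}$ by the exponent count above; so on any arc of length $\delta_N$ inside a component of $\{x_{min,N} \ge \rho_N\}$, the total variation of $S_N(\bar f_0)$ exceeds $2N^{\gamma^2+\eps}$, whence such an arc cannot be entirely inside $A_N$ unless $A_N\cap(\text{that arc})$ is a single sub-arc — forcing each component of $A_N$ to have length $< \delta_N$ after all. Pinning down this oscillation estimate rigorously, keeping track of which index $j^*$ dominates as $x$ varies, and handling the boundary between adjacent dominating indices, is where the real work lies; everything else (the covering count, the three-gap bookkeeping) is routine. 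I would isolate the oscillation bound as a separate short lemma before assembling the proof.
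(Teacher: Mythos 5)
Your overall skeleton matches the paper's: partition $\T$ by the orbit points $\{-i\alpha\}_{i=0}^{N-1}$ into $N$ intervals and show that $A_N$ meets each one in at most three arcs of length $2\delta_N$. But the step you yourself flag as the ``main obstacle'' is a genuine gap, and your proposed fix does not close it. You want to write $S_N(\bar f_0)(x)=\bar f(x+j^*\alpha)+(\text{error})$ and argue that the dominant term is steeply monotone while the error is negligible. The problem is that the error bound $O(N^\gamma\ln^5 N)$ from Lemma \ref{lem:DK0} controls \emph{values}, not derivatives, and it already exceeds the threshold $N^{\gamma^2+\eps}$; worse, the first derivative of the non-dominant part $\sum_{j\neq j^*}\bar f'(x+j\alpha)$ is \emph{comparable} to that of the dominant term (the second-closest orbit point sits at distance of the same order as the closest one, by the three-gap structure), so monotonicity of the full sum on the region where $j^*$ is constant simply does not follow. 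Indeed $S_N(\bar f')$ necessarily vanishes somewhere inside each partition interval (it tends to $-\infty$ at the left endpoint and $+\infty$ at the right), and near that critical point no ``dominant term'' first-order argument can bound the width of the sublevel set. Your comparison ``error $\ll$ oscillation of $g$ over a $\delta_N$-arc'' compares the main term's oscillation with the \emph{threshold}, not with the error's oscillation over that arc, so it does not rescue the argument.

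The missing ingredient is a uniform lower bound on the \emph{second} derivative of the full Birkhoff sum: $S_N(\bar f'')(x)\geq\bar f''(x^N_{min})\geq N^{2+\gamma}\ln^{-10}N$ on the interior of each partition interval (this is Lemma \ref{lem:DK} in the paper, resting on formula (13) of \cite[Lemma 4.2]{FFK} and the Diophantine condition; it works because all terms of $\bar f''$ near the singularity are positive, so no cancellation occurs at the level of second derivatives). Convexity with this quantitative constant gives: a unique zero $a_{j,1}$ of $S_N(\bar f_0')$ in each interval $I_j$; outside a $\delta_N$-neighborhood of $a_{j,1}$ the derivative satisfies $|S_N(\bar f_0')|\geq N^{2+\gamma}\ln^{-10}N\cdot N^{-1-\gamma/5}=N^{1+4\gamma/5}\ln^{-10}N$; and then, around the minimizer of $S_N(\bar f_0)$ on each of the two remaining pieces, the mean value theorem shows the function exceeds $N^{\gamma^2+\eps}$ outside a further $\delta_N$-arc (using $\gamma^2<3\gamma/5$ for $\gamma<1/2$). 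That yields exactly three arcs per $I_j$. Your preliminary reduction via $x_{min,N}\gtrsim N^{-1}(\ln N)^{-5/\gamma}$ is correct but is not needed once the convexity bound is in hand.
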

\begin{proof}
Let $\bar{f}$ be as in \eqref{eq:asy2}. First we state the following lemma giving lower bounds on the size of $\bar{f}''$:
\begin{lemma}\label{lem:DK} For sufficiently large $N\in \N$ and for every $x\in \T\setminus\{-i\alpha\}_{i=0}^{N-1}$ we have 
$$
S_N(\bar{f}'')(x)\geq N^{2+\gamma}\ln^{-10}N.
$$
\end{lemma}
\begin{proof} From formula (13) in Lemma 4.2 in \cite{FFK}, it follows that $S_N(\bar{f}'')(x)\geq \bar{f}''(x^N_{min})$, where $x^N_{min}$ minimizes the distance from $\{x+j\alpha\}_{j\leq N-1}$ to $0$. It follows that if $n$ is such that  $N\in [q_n,q_{n+1})$, then $0<x^N_{min}<\frac{1}{q_n}$. Then the statement follows from \eqref{eq:asy2} and the diophantine assumption on $\alpha$ as $q_n^{2+\gamma}\geq  q_{n+1}^{2+\gamma}\ln^{-10}(q_{n+1})\geq N^{2+\gamma}\ln^{-10}N$. 
\end{proof}

Consider the interval partition $\cP_N$ of $\T$  by points $\{-i\alpha\}_{i=0}^{N-1}$. Then by \eqref{eq:asy2} it follows that $S_N(\bar{f}_0)(\cdot)$ is a $C^3$ function on Int$(I_j)$ for every interval $I_j\in \cP_N$. Let $I_j=[a_j,b_j)$. By Lemma \ref{lem:DK} it follows that $S_N(\bar{f}')(\cdot)$ is monotone on $I_j$. By d'Hospital's rule 
 $S_N(\bar{f}'_0)(\cdot)$ satisfies $\DS \lim_{x\to a_j^+}S_N(\bar{f}'_0)(x)=-\infty$ and 
 $\DS \lim_{x\to b_j^-}S_N(\bar{f}'_0)(x)=\infty$. So let  $a_{j,1}=y_j\in I_j$ be  the unique 
 point such that $S_N(\bar{f}'_0)(a_{j,1})=0$. Note that 
$$
S_N(\bar{f}'_0)(x)-S_N(\bar{f}'_0)(a_{j,1})=S_N(\bar{f}''_0)(\theta_x)(x-a_{j,1}) 
$$
In particular, if $J_{j,0}$ is an interval of size $\frac{2}{N^{1+\gamma/5}}$ centered at $a_{j,1}$, by Lemma \ref{lem:DK} it follows that for $x\in I_j\setminus J_{j,0}$
\begin{equation}\label{eq:sm1}
|S_N(\bar{f}'_0)(x)|\geq N^{2+\gamma}\ln^{-10}N \cdot \frac{1}{N^{1+\gamma/5}}= N^{1+4\gamma/5}\ln^{-10}N.
 \end{equation}
Consider the two disjoint intervals $K_{j,1}$ and $K_{j,2}$  in $I_j$ so that $K_{j,1}\cup K_{j,2}=I_j\setminus J_{j,0}$. Then for $w=1,2$, let $x_{j,w}\in K_{j,w}$ be the point minimizing  $S_N(\bar{f}_0)(\cdot)$ on $K_{j,w}$. If $S_N(\bar{f}_0)(x_{j,w})\geq N^{\gamma^2+\eps}$, then $A_N\cap I_j\subset J_i$. If not, let $J_{j,w}$ be an interval of size $\frac{2}{N^{1+\gamma/5}}$ centered at $x_{j,w}$. Then for every $x\in K_{j,w}$, we get 
$$
|S_N(\bar{f}_0)(x)-S_N(\bar{f}_0)(x_{j,w})|=|S_N(\bar{f}'_0)(\theta_x||x-x_{j,w}|
$$
for some $\theta_x\in K_{j,w}$. So if $x\in K_{j,w}\setminus J_{j,w}$, then by \eqref{eq:sm1} we get 
$$
|S_N(\bar{f}_0)(x)|\geq N^{1+4\gamma/5}\ln^{-10}N\cdot \frac{1}{N^{1+\gamma/5}}- S_N(\bar{f}_0)(x_{j,w})\geq  N^{3\gamma/5}\ln^{-10}N -N^{\gamma^2+\eps}>N^{\gamma^2+\eps},
$$
if $\eps>0$ is small enough (remember that $\gamma<1/2$). It then follows that 
$$
A_N\cap I_j\subset J_{j,1}\cup J_{j,2}\cup J_{j,0}.
$$
and the $J_{j,w}$ are intervals of size $\frac{2}{N^{1+\gamma/5}}$ centered at $a_{j,w}$. Then 
$\DS
A_N\subset \bigcup_{j=1}^N[J_{j,1}\cup J_{j,2}\cup J_{j,0}].
$
This gives \eqref{eq:an} and finishes the proof.
\end{proof}


\subsection{Choosing the singularities} \label{sec.singularities} 
We will consider a Kochergin flow $(K_t)$ with $\gamma<1/2$, $\alpha\in \cD$, and a roof function $f$ given by 
\begin{equation}\label{fsum'}
 f(\cdot)=\sum_{i=1}^{s+3} \bar{f}(\cdot-c_i),
\end{equation}
where $s:=\frac{50}{\gamma}$. In this subsection we describe the choice of the $c_i\in \cT$. 
In the process, we will explain why it is preferable for the presentation to index the singularities from $1$ to $s+3$ instead of from $1$ to $s$ with a larger $s$. 

For $T>0$ and ${\bf x}=(x,w)\in \T^2$ let $N(x,w,T)$ denote the number of returns of $\{K_t(x,w)\}_{t\leq T}$ to
 $\cT=\T\times \{0\}$,  that is the unique integer $N$ such that 
 $$0\leq w+T-S_Nf(x)<f(x+N\alpha).$$
  
 Let $C=(\inf_\T f)^{-1}$. Notice that  $N(x,w,T)\leq C T$. 
 For $\bar{c}\in \T^2$ Let $V_\kappa (\bar{c})$ be the 
 $\kappa$ neighborhood of $\bar{c}$. 
 
 Recall that $A_N$ is defined in  \eqref{eq:an0}  as 
$
\DS A_N:=\{x\in \T\;:\; |S_N(\bar{f}_0)(x)|\leq N^{\gamma^2+\eps}\}.
$

 For $n\in \N$ and $c\in \T$ let  
\begin{equation}\label{eq:gcn}G(c,n)= \bigcup_{k=-2q_{n+1}}^{2q_{n+1}}
\left[c{+k\alpha}-\frac{1}{q_n\ln^{5}q_n}, c{+k\alpha}+\frac{1}{q_n\ln^{5}q_n}\right].
\end{equation} 

{Let $S(T,(i_v)_{v=1}^s)$ be the set of points $\bx=(x, w)$ such that
\begin{equation}\label{eq:awaysing}
\{{\bf x}, K_T({\bf x})\}\bigcap \bigcup_{j\in \{i_1,\ldots,{i_s}\} }V_\kappa(\bar{c}_j)
{=\emptyset}
\end{equation}
and 
\begin{equation}\label{eq:awaysing2}
 \{x+N(x,w,T)\alpha+u\alpha\}_{|u|\leq 2T^\gamma} \bigcap 
 \bigcup_{j\in \{i_1,\ldots,{ i_s}\}} \Big[-\frac{1}{2T^\gamma\ln^{7}T}+ c_j, c_j+\frac{1}{2T^\gamma\ln^{7}T}\Big]
 {=\emptyset}.
\end{equation}
}

 \begin{definition} \label{def.good} We say that the set of singularities $\{c_1,\ldots,c_{s+3}\}$ is {\it good} if  \begin{itemize} 
\item[$(\cG1)$] For every $i,j \in \{1,\ldots, s+3\}$,  for any $n$  sufficiently large 
$$G(c_i,n)\cap G(c_j,n)=\emptyset$$

\item[$(\cG2)$] For any choice of pairwise different elements  $i_1,\ldots,i_s$  from $\{1,\ldots,s+3\}$ and for $N$ sufficiently large 
 $$Leb_1\Big(\bigcap_{w=1}^sA_N+c_{i_w}\Big)\leq  N^{-6}$$
 
\item[$(\cG3)$] For sufficiently large $T>0$, for every ${\bf x}\in \T^2$ there is $(i_1,\ldots,i_s)\subset \{1,\ldots,s+3\}$ such that $ {\bf x}\in  S(T,(i_v)_{v=1}^s)$ 
\end{itemize} 
  \end{definition} 
 
The set of good tuples will be denoted by $\cC$.
\medskip

The reason why we preferred to index the singularities from $1$ to $s+3$ is that we need, for each 
$\bf x$ and every $T$, that there exist at least $s$ singularities such that $(\cG3)$ holds. In Lemma \ref{lem:sep}, we will see why considering $s+3$ singularities instead of $s$ is sufficient for insuring $(\cG3)$.

The choice of the singularities $c_1,\ldots,c_{s+3}$ is based on the following result that we will prove in Section \ref{sec.c.choice}. 
 \begin{proposition} \label{prop.choice.c}
$Leb_{s+3}(\cC)=1$. 
 \end{proposition}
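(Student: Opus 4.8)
The goal is to show that the set $\cC$ of good tuples $(c_1,\dots,c_{s+3})\in [0,1]^{s+3}$ has full Lebesgue measure, which amounts to showing each of the three conditions $(\cG1)$, $(\cG2)$, $(\cG3)$ holds for Lebesgue-a.e.\ tuple (a finite intersection of full-measure sets is full measure). I would treat the three conditions separately, devoting most of the effort to $(\cG2)$.

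\textbf{Condition $(\cG1)$.} This is a genericity statement about avoiding resonances. The set $G(c_i,n)$ is a union of $4q_{n+1}+1$ intervals of radius $1/(q_n\ln^5 q_n)$ around the points $c_i+k\alpha$. For fixed $n$, the ``bad'' set of pairs $(c_i,c_j)$ for which $G(c_i,n)\cap G(c_j,n)\ne\emptyset$ is contained in a union of $O(q_{n+1}^2)$ intervals in the difference variable $c_i-c_j$, each of length $O(1/(q_n\ln^5 q_n))$, hence has measure $O(q_{n+1}^2/(q_n\ln^5 q_n))$. Using $\alpha\in\cD$, i.e.\ $q_{n+1}\le C q_n\ln^2 q_n$, this is $O(q_n\ln^{-1}q_n)$, which does \emph{not} obviously sum. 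So instead I would exploit that the intervals around $c_i+k\alpha$ for $|k|\le 2q_{n+1}$ are, up to controlled overlaps, roughly equidistributed (three-distance theorem / the fact that $\{k\alpha\}_{|k|\le 2q_{n+1}}$ is $O(1/q_{n+1})$-dense); then $G(c_i,n)$ has total measure $O(q_{n+1}/(q_n\ln^5 q_n))=O(\ln^{-3}q_n)$, and the bad set of differences $c_i-c_j$ has measure $O(\ln^{-3}q_n)$. Since $q_n$ grows at least exponentially along a subsequence (indeed $q_n\ge$ Fibonacci-type growth), $\sum_n \ln^{-3} q_n<\infty$, so Borel--Cantelli gives that for a.e.\ pair the condition holds for all large $n$; a finite union over pairs $(i,j)$ finishes $(\cG1)$.

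\textbf{Condition $(\cG2)$.} This is the crux. We must show that for a.e.\ tuple, for every choice of $s$ distinct indices $i_1,\dots,i_s$ and all large $N$, $\Leb_1\big(\bigcap_{w=1}^s (A_N+c_{i_w})\big)\le N^{-6}$. The key input is Proposition~\ref{lemma.delta}: $A_N$ is covered by $3N$ intervals of length $\delta_N=2/N^{1+\gamma/5}$, so $\Leb_1(A_N)\le 6N\cdot N^{-1-\gamma/5}=6N^{-\gamma/5}$. For a \emph{fixed} tuple of shifts, the translated sets need not be independent, so I would integrate over the $c_{i_w}$'s. By Fubini, the expected value of $\Leb_1\big(\bigcap_w (A_N+c_{i_w})\big)$ over $(c_{i_1},\dots,c_{i_s})\in [0,1]^s$ equals $\Leb_1(A_N)^s = O(N^{-s\gamma/5})$. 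Since $s=50/\gamma$, we get $s\gamma/5 = 10$, so this expectation is $O(N^{-10})$. By Markov's inequality, the measure of tuples with $\Leb_1\big(\bigcap_w(A_N+c_{i_w})\big) > N^{-6}$ is $O(N^{6-10})=O(N^{-4})$, which is summable in $N$. Borel--Cantelli then gives $(\cG2)$ for a.e.\ choice of the $s$ indices, and a finite union over the $\binom{s+3}{s}$ index choices completes it. (One subtlety: the covering intervals $a_i=a_i(N)$ depend on $N$ but the bound $\Leb_1(A_N)\le 6N^{-\gamma/5}$ is uniform, which is all that's needed; also one should note $A_N$ is measurable, being a sublevel set of a function that is piecewise $C^3$.)

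\textbf{Condition $(\cG3)$.} This is a covering/pigeonhole statement: for every $\bx\in\T^2$ and large $T$, among the $s+3$ singularities at least $s$ of them are ``avoided'' by $\bx$ in the sense of \eqref{eq:awaysing}--\eqref{eq:awaysing2}. The point of having $s+3$ rather than $s$ is that at most $3$ singularities can be ``blocked'' for a given $\bx$. Indeed: the orbit endpoints $\bx$ and $K_T\bx$ can each lie in at most one neighborhood $V_\kappa(\bar c_j)$ provided $\kappa$ is small enough relative to the minimal separation of the $\bar c_j$ (which we may assume, shrinking $\kappa$ — this separation is a generic, indeed open dense, condition on the tuple, easily arranged on a full-measure set); that rules out at most $2$ indices via \eqref{eq:awaysing}. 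For \eqref{eq:awaysing2}, the finite orbit segment $\{x+N(x,w,T)\alpha+u\alpha\}_{|u|\le 2T^\gamma}$ is a set of $O(T^\gamma)$ points, and the forbidden intervals around each $c_j$ have length $1/(T^\gamma\ln^7 T)$; so for a point of this segment to land in the $c_j$-interval, we'd need a near-resonance $\|m\alpha - (c_j - c_{j'})\|$ small for some small $m$ — but by $(\cG1)$-type separation of the $c_j$ along the $\alpha$-orbit, at most one extra index $j$ can be blocked by each of $\bx$, $K_T\bx$. Being slightly careful, one shows at most $3$ indices total are blocked, so $s$ remain. I would isolate this counting as a separate lemma (referenced in the text as Lemma~\ref{lem:sep}) and invoke it here; it does not require any measure-theoretic input beyond the a.e.\ separation already established, and in fact holds for \emph{every} tuple in the (open, dense, full-measure) set where the $\bar c_j$ are pairwise distinct and $(\cG1)$ holds.

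\textbf{Main obstacle.} The delicate point is $(\cG2)$: one must make sure the Fubini/Markov argument is legitimate, i.e.\ that the map $(c_{i_1},\dots,c_{i_s})\mapsto \Leb_1\big(\bigcap_w (A_N+c_{i_w})\big)$ is measurable and that $\int \Leb_1\big(\bigcap_w(A_N+c_{i_w})\big)\,dc = \Leb_1(A_N)^s$ exactly (this is just Fubini applied to the indicator $\prod_w \one_{A_N}(x - c_{i_w})$ on $[0,1]^{s+1}$, giving $\int_0^1 \prod_w \Leb_1(A_N)\,\dots$ — clean). The arithmetic $s\gamma/5 = 10 > 6+1$ is exactly what forces the choice $s=50/\gamma$, and explains the "$+3$'' is an independent, much cheaper requirement handled by the pigeonhole in $(\cG3)$. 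The only place genuine Diophantine control of $\alpha$ enters is through Proposition~\ref{lemma.delta} (hence through $\alpha\in\cD$) and the separation estimates in $(\cG1)$/$(\cG3)$; none of it interacts with the probabilistic argument for $(\cG2)$.
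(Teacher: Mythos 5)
Your proposal is correct and follows essentially the same route as the paper: $(\cG1)$ by a union bound on near-resonances $c_i-c_j\approx m\alpha$ giving measure $O(\ln^{-3}q_n)$ per $n$ plus Borel--Cantelli, $(\cG2)$ by the exact Fubini identity $\int \Leb\bigl(\bigcap_w(A_N+t_w)\bigr)\,dt=\Leb(A_N)^s$ combined with Proposition~\ref{lemma.delta}, Markov and Borel--Cantelli, and $(\cG3)$ as a deterministic pigeonhole consequence of $(\cG1)$. The only superfluous step is your appeal to equidistribution/three-distance in $(\cG1)$: the intersection condition depends only on the difference $k-k'$, so the bad set in the variable $c_i-c_j$ is already a union of only $O(q_{n+1})$ intervals and the plain union bound suffices, exactly as in the paper.
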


 \subsection{Choosing the cocycle $\btau$} \label{sec.tau.choice}

Define a $C^\infty$ function $\btau=(\tau_1,\ldots,\tau_{s+3}):\T^2\to \R^{s+3}$ as follows:  For $i\in\{1,\ldots, s+3\}$ let $\tau_i:\T^2\to \R$ be a $C^\infty$ mean zero function such that $\tau_i=1$ on $\kappa$ neighborhood of $\bar{c}_i$ and $\tau_i=0$ on $\kappa$ neighborhoods of $\bar{c}_j$ with $j\neq i$, for some small $\DS 0<\kappa<\frac{1}{2}\min_{i\neq j}\|\bar{c}_i-\bar{c}_j\|$. 
 
 \begin{proposition} \label{prop.choice.tau} If $(c_1,\ldots,c_{s+3})\in [0,1]^{s+3}$ are {\it good}, then the flow $(K_t)$ defined as in \eqref{fsum} satisfies $S2$. More precisely, the function $\btau$ satisfies 
$$
\nu\Big(\{x\in \T^2\;:\; \Big\|\int_0^T\btau(K_tx)dt\Big\|<C\ln^2 T  \}\Big)=o(T^{-5}).
$$
where $\nu$ is the Haar measure on $\T^2$. 
  \end{proposition}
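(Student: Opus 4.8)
The plan is to use the special-flow presentation \eqref{eq.special} of $(K_t)$ over the rotation $R_\alpha$ with roof $f=\sum_{i=1}^{s+3}\bar f(\cdot-c_i)$, and to reduce the flow integral of each component $\tau_i$ to a Birkhoff sum over $R_\alpha$ of the induced observable $\hat\tau_i(\theta):=\int_0^{f(\theta)}\tau_i(\theta,v)\,dv$. For $\bx=(\theta,u)$ with $0\le u<f(\theta)$, putting $N=N(\bx,T)$ so that $K_T\bx=(\theta+N\alpha,\,u+T-S_N(f)(\theta))$, unfolding the orbit between its $0$-th and $N$-th returns to $\cT$ gives $\int_0^T\tau_i(K_t\bx)\,dt=S_N(\hat\tau_i)(\theta)+E_i(\bx,T)$, where $E_i$ is the difference of the two partial-fibre integrals at the endpoints. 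Since $\int_\T\hat\tau_i\,d\theta=\nu(\tau_i)=0$, and since $\tau_i\equiv1$ on $V_\kappa(\bar c_i)$ (a fixed neighbourhood containing the whole slow neck of the $i$-th saddle) while $\tau_i\equiv0$ near the other saddles, $\hat\tau_i$ has exactly one power-like singularity, located at $c_i$, whose singular part agrees with that of $\bar f(\cdot-c_i)$; hence $\hat\tau_i=\bar f_0(\cdot-c_i)+\rho_i$ with $\rho_i$ of bounded variation and $\int_\T\rho_i=0$. By Lemma \ref{LmSumBV} (Appendix \ref{AppO}) one has $S_N(\rho_i)={\rm O}(\ln^4 N)$, so
$$
\int_0^T\tau_i(K_t\bx)\,dt= S_N(\bar f_0)(\theta-c_i)+{\rm O}(\ln^4 T)+E_i(\bx,T).
$$

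The next step is to show that, apart from an exceptional set of small measure, $E_i(\bx,T)$ is negligible for enough indices; this is where $(\cG1)$–$(\cG3)$ enter. The term $E_i(\bx,T)$ can be large only if the window $[0,T]$ clips, near one of its ends, an excursion entering $V_\kappa(\bar c_i)$, and such an excursion at a return $k$ contributes an amount comparable to $\bar f(\theta+k\alpha-c_i)$. Conditions \eqref{eq:awaysing}–\eqref{eq:awaysing2} defining $S(T,(i_v)_{v=1}^{s})$ state exactly that for $i\in\{i_1,\dots,i_s\}$ neither $\bx$ nor $K_T\bx$ lies in $V_\kappa(\bar c_i)$ and none of the $\pm 2T^\gamma$ returns around the current one lands within $\frac1{2T^\gamma\ln^{7}T}$ of $c_i$; since an excursion at distance $\ge\frac1{2T^\gamma\ln^{7}T}$ from a saddle lasts at most ${\rm O}(T^{\gamma^2}\,{\rm polylog}\,T)$ and so meets at most ${\rm O}(T^\gamma)$ returns, this forces $|E_i(\bx,T)|={\rm O}(T^{\gamma^2}\,{\rm polylog}\,T)$ for those indices. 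By $(\cG3)$ — and here the reason for working with $s+3$ rather than $s$ singularities (Lemma \ref{lem:sep}) is that $s$ ``clean'' indices always survive — every $\bx$ lies in some $S(T,(i_v)_{v=1}^s)$; hence, for such $\bx$, $N\asymp T$ and for at least $s$ indices $i$ we get $\bigl|\int_0^T\tau_i(K_t\bx)\,dt-S_N(\bar f_0)(\theta-c_i)\bigr|={\rm O}(T^{\gamma^2}\,{\rm polylog}\,T)\ll N^{\gamma^2+\eps}$. Consequently, if $\bx$ belongs to the set $\cB_T:=\{\bx:\|\int_0^T\btau(K_t\bx)\,dt\|<C\ln^{2}T\}$ whose measure we must estimate, then $|S_N(\bar f_0)(\theta-c_{i_v})|\le N^{\gamma^2+\eps}$ for all $v$, i.e. $\theta-c_{i_v}\in A_N$, so $\theta\in\bigcap_{v=1}^{s}(A_N+c_{i_v})$ with $N=N(\bx,T)$.

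It then remains to bound $\nu(\cB_T)$. Passing to $(\theta,u)$-coordinates, $\nu\bigl(\cB_T\cap S(T,(i_v))\bigr)\le\sum_{N}\int_{\bigcap_v(A_N+c_{i_v})}\Delta_N(\theta)\,d\theta$, where $\Delta_N(\theta)\le\min\!\big(f(\theta),f(\theta+N\alpha)\big)$ is the length of the interval of $u$'s with $N(\theta,u,T)=N$. Two facts make this small. First, on $\bigcap_v(A_N+c_{i_v})$ the orbit stays, in the sense of $(\cG1)$, at distance $\gtrsim\frac1{q_n\ln^{5}q_n}$ from every saddle (for $q_n\le N<q_{n+1}$), so there $\min(f(\theta),f(\theta+N\alpha))={\rm O}(N^{\gamma}\,{\rm polylog}\,N)$; moreover, since $\theta-c_{i_v}\in A_N$ pins down, via Lemmas \ref{lem:DK0}–\ref{lem:DK}, the contributions of the $c_{i_v}$ to $S_N(f)(\theta)$ to within ${\rm O}(N^\gamma\,{\rm polylog}\,N)$ (and the remaining $\le 3$ singularities contribute ${\rm O}(N^\gamma\,{\rm polylog}\,N)$ too, else the cocycle is large, see below), we have $S_N(f)(\theta)=N\!\int_\T f+{\rm O}(N^\gamma\,{\rm polylog}\,N)$, so only $N$ in a window of length ${\rm O}(T^\gamma\,{\rm polylog}\,T)$ about $T/\!\int_\T f$ actually occur. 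Second, by $(\cG2)$ one has $Leb_1\big(\bigcap_v(A_N+c_{i_v})\big)\le N^{-6}$, while Proposition \ref{lemma.delta} exhibits $A_N$ as a union of $3N$ intervals of length $\delta_N=N^{-1-\gamma/5}$, so the portion of $\bigcap_v(A_N+c_{i_v})$ lying near a singularity is negligible. Combining, $\nu\bigl(\cB_T\cap S(T,(i_v))\bigr)={\rm O}\big(T^\gamma\,{\rm polylog}\,T\cdot T^{-6}\big)={\rm O}\big(T^{\gamma-6}\,{\rm polylog}\,T\big)$; summing over the finitely many $s$-tuples and using $\gamma<1/2$ yields $\nu(\cB_T)=o(T^{-5})$, which is $S2$. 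The points for which the orbit does make a long near-saddle excursion inside the window are dealt with separately: either the excursion is essentially contained in $[0,T]$, so that $\|\int_0^T\btau(K_t\bx)\,dt\|\ge C\ln^2 T$ and $\bx\notin\cB_T$; or it is clipped at an endpoint, in which case the offending return is the last one (not an interior one), so $S_N(f)(\theta)=N\!\int_\T f+{\rm O}(N^\gamma\,{\rm polylog}\,N)$ still holds, $N$ is again confined to a short window, and $(\cG2)$ makes this set $o(T^{-5})$.

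The step I expect to be the main obstacle is Step 2 together with the $N$-confinement used in Step 3: turning the soft genericity properties $(\cG1)$–$(\cG3)$ into the stated quantitative bound amounts to proving that clipping an excursion near a degenerate saddle at the edge of $[0,T]$ is genuinely harmless and that, on the exceptional set, only ${\rm O}(T^\gamma\,{\rm polylog}\,T)$ values of $N$ contribute — without this confinement the gain $N^{-6}$ from $(\cG2)$, summed over the $\asymp T$ a priori possible values of $N$, would only give ${\rm O}(T^{-4})$ instead of $o(T^{-5})$. Treating all endpoint configurations uniformly — in particular the $\bx$ whose fibre coordinate $u$ lies beyond the saddle of the first excursion — is the technically heaviest part.
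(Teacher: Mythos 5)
Your reduction of the flow integrals to Birkhoff sums of $\bar f_0(\cdot-c_j)$, the use of $(\cG3)$ to extract $s$ ``clean'' indices, and the appeal to $(\cG2)$ through the sets $\bigcap_v(A_N+c_{i_v})$ all match the paper. The genuine gaps are exactly at the two steps you flag as the obstacle, and the paper resolves them by devices different from the ones you propose. First, the paper never proves (and does not need) your $N$-confinement claim. Instead it partitions the range of $N$ into the $O(T^{1-\gamma})$ level sets $W(u)$ of \eqref{DefW}, on which $N\in[uT^\gamma,(u+1)T^\gamma)$, and uses \eqref{eq:awaysing2} --- whose real role is this, not the control of the endpoint errors $E_i$ --- to show (Lemma \ref{lem:lowo}) that the tail $\sum_{\ell=[uT^\gamma]}^{N}\bar f_0(x+\ell\alpha-c_j)$ is $O(T^{\gamma^2}\,\mathrm{polylog}\,T)$; hence the bad-set condition pins $x$ into $\bigcap_v(A_{[uT^\gamma]}+c_{i_v})$, of measure at most $T^{-6+6\eps}$, \emph{once per block}. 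Summing over the $O(T^{1-\gamma})$ blocks gives $T^{-5-\gamma+6\eps}=o(T^{-5})$ with no need to locate $N$ near $T/\!\int_\T f$. Your confinement claim rests on $S_N(f)(\theta)=N\!\int_\T f+O(N^\gamma\,\mathrm{polylog})$ on the bad set, which requires controlling the closest approach to \emph{all} $s+3$ singularities, including the three excluded ones; a huge approach may occur at either end of the orbit segment ($k=0$ as well as $k=N$), and in the $k=0$ case $S_N(f)(\theta)$ does contain the huge term $f(\theta)$, so $N$ is not near $T/\!\int_\T f$ and the ``only the last return can offend'' assertion fails. (Also, your appeal to $(\cG1)$ to keep the orbit away from the saddles is a misuse: $(\cG1)$ separates the sets $G(c_i,n)$ from one another and enters only through Lemma \ref{lem:sep} in the proof of $(\cG3)$.)

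Second, the dichotomy ``either the excursion is essentially contained in $[0,T]$ and the cocycle is large, or it is clipped and harmless'' is the content of the paper's Proposition \ref{prop.Nsmall}, and establishing it is a genuine case analysis rather than a remark. The paper shows that $N(x,w,T)<T^{1-\eps}$ forces $\max_j\int_0^T\tau_j\ge\eps^2 T$, so such points never lie in the bad set; the hard sub-case (your ``fibre coordinate beyond the saddle of the first excursion'', Case II.2 there) is resolved by showing that if the initial clipped excursion contributes little, the orbit must end deep inside another huge fibre over a \emph{different} singularity $c_{j'}$, whose contribution to $\int\tau_{j'}$ is then of order $T$. Without this input, points with $N\ll T$ are not excluded and your counting over $N$ cannot even begin. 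In short, the skeleton is right, but the two quantitative devices that make the union bound close --- the $T^\gamma$-block decomposition with Lemma \ref{lem:lowo}, and Proposition \ref{prop.Nsmall} --- are missing and are not replaced by working substitutes.
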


\section{ The proofs of technical propositions.}
\label{ScTechnical}
\subsection{Proof of Proposition \ref{prop.choice.c}} \label{sec.c.choice}
We will always fix $s=\frac{50}{\gamma}$.
We start with $(\cG1)$. 
 \begin{lemma} \label{lemma.G1} There is a set $\cF\in [0,1]^{s+3}$ of full Lebesgue measure such that for any $(c_1,\ldots,c_s)\in \cD$, for every $i,j \in \{1,\ldots, s+3\}$,  for any $n$  sufficiently large 
$$G(c_i,n)\cap G(c_j,n)=\emptyset.$$
\end{lemma}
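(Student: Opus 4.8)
The plan is to show that $(\cG1)$ fails only on a null set by a Borel--Cantelli argument on the parameter space $[0,1]^{s+3}$. First I would observe that it suffices to treat a fixed pair $i\neq j$ and then intersect over the finitely many pairs. For a fixed pair, $(\cG1)$ asks that for all large $n$ the sets $G(c_i,n)$ and $G(c_j,n)$ are disjoint; since $G(c,n)$ is a union of $4q_{n+1}+1$ intervals of radius $(q_n\ln^5 q_n)^{-1}$ centered at the points $c+k\alpha$, $|k|\le 2q_{n+1}$, the intersection $G(c_i,n)\cap G(c_j,n)$ is nonempty iff $c_i-c_j$ lies within $2(q_n\ln^5 q_n)^{-1}$ of some point $(k'-k)\alpha$ with $|k|,|k'|\le 2q_{n+1}$, i.e.\ within $2(q_n\ln^5 q_n)^{-1}$ of $m\alpha$ for some $|m|\le 4q_{n+1}$. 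So in terms of the single variable $\xi := c_i - c_j \pmod 1$, the ``bad at level $n$'' set is
$$
B_n \;=\; \bigcup_{|m|\le 4q_{n+1}} \Bigl(m\alpha - \tfrac{2}{q_n\ln^5 q_n},\; m\alpha + \tfrac{2}{q_n\ln^5 q_n}\Bigr),
$$
whose Lebesgue measure is at most $(8q_{n+1}+1)\cdot \tfrac{4}{q_n\ln^5 q_n}$.

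Next I would use the Diophantine hypothesis $\alpha\in\cD$: by definition $q_{n+1}\le C q_n\ln^2 q_n$, so
$$
\Leb_1(B_n) \;\le\; \frac{36\, C q_n \ln^2 q_n}{q_n \ln^5 q_n} \;=\; \frac{36C}{\ln^3 q_n}.
$$
This tends to $0$ but is not summable as it stands, so a naive Borel--Cantelli along all $n$ does not immediately close. The fix is to pass to a sparse subsequence: since $q_{n+1}\ge q_n + q_{n-1}$ forces $q_n\to\infty$ at least at Fibonacci rate, one has $\ln q_n \gtrsim n$ for all large $n$ (indeed $q_n$ grows at least like $\phi^{n}$), hence $\Leb_1(B_n)\le 36C/\ln^3 q_n = O(1/n^3)$, which \emph{is} summable. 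Therefore $\sum_n \Leb_1(B_n)<\infty$, and Borel--Cantelli gives that for a.e.\ $\xi$ only finitely many $B_n$ contain $\xi$, i.e.\ for a.e.\ $\xi$ there is $n_0(\xi)$ with $\xi\notin B_n$ for $n\ge n_0$, which is exactly $G(c_i,n)\cap G(c_j,n)=\emptyset$ for all large $n$.

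Finally I would unwind this back to the parameters: the map $(c_1,\dots,c_{s+3})\mapsto c_i-c_j$ pushes Lebesgue measure on $[0,1]^{s+3}$ to Lebesgue measure on $\T$, so the set of tuples for which the pair $(i,j)$ is bad infinitely often is null; taking $\cF$ to be the complement of the union of these null sets over all $\binom{s+3}{2}$ pairs gives a full-measure set on which $(\cG1)$ holds for every pair and all large $n$. (One must also discard the measure-zero set where some $c_i-c_j$ is rational, or simply note it is contained in the bad set already; and the hypothesis is stated as $(c_1,\dots,c_s)\in\cD$, which should read $\alpha\in\cD$, the standing assumption.) The only mild subtlety — and the one place a reader might stumble — is the passage from the non-summable bound $O(1/\ln^3 q_n)$ to a summable bound, which relies on the elementary lower bound $q_n\ge \phi^{\,n-1}$ coming from \eqref{eq:qi}; everything else is a direct measure computation.
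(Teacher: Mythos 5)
Your argument is correct and is essentially the paper's proof: the paper establishes the same per-pair estimate (the set of $d$ with $G(c,n)\cap G(d,n)\neq\emptyset$ has measure $O(q_{n+1}/(q_n\ln^5 q_n))=O(\ln^{-3}q_n)$ by $\alpha\in\cD$), deduces $\Leb_{s+3}(F_n)\ge 1-\ln^{-2}q_n$ for the set of good tuples at level $n$, and sets $\cF=\bigcup_m\bigcap_{n\ge m}F_n$, which is exactly your Borel--Cantelli step carried out in the full parameter space rather than on the difference variable $c_i-c_j$. The summability issue you flag (needing $\ln q_n\gtrsim n$ from the Fibonacci-type growth of $q_n$ so that $\sum_n\ln^{-3}q_n<\infty$) is genuinely required and is left implicit in the paper's assertion that $\Leb_{s+3}(\cF)=1$; your making it explicit is a correct completion, not a deviation.
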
 

\begin{proof} Define
\begin{equation}\label{eq:fn}
F_n:=\Big\{(c_1,\ldots, c_{s+3})\in [0,1]^{s+3}\;:\;  G(c_i,n)\cap G(c_j,n)=\emptyset, \text{ for any } i\neq j\Big\}.
\end{equation}
We have the following: 

\noindent {\sc Claim.} For $n$ sufficiently large, we have
$$
\Leb_{s+3}(F_n)\geq 1-\frac{1}{\ln^2q_n}.
$$

\begin{proof}[Proof of the claim] We say that $d\in \T$ is $n$-far from $c\in \T$ if $G(c,n)\cap G(d,n)=\emptyset$. We have 
$$\
Leb_1(\{d: d \text{ is } n \text{ far from } c\}\geq 1-\frac{8q_{n+1}}{q_n\ln^5q_n}\geq 1-\frac{8{C}}{\ln^3q_n},
$$ the last inequality since $\alpha\in \cD$. Notice that $F_n$ contains vectors $(c_1,\ldots,c_{s+3})\in [0,1]^{s+3}$ such that $c_1$ is any number in $[0,1]$, $c_2$ is $n$-far from $c_1$, $c_3$ is $n$-far from $c_2$ and $n$-far from $c_1$, and so on until $c_s$ is $n$-far from $c_j$ for every $j<s$. Therefore, 
$$
Leb_{s+3}(F_n)\geq \prod_{\ell=1}^{s+3} \left(1-\frac{8{C}\ell}{\ln^3q_n}\right)\geq 1-\ln^{-2}q_n,
$$
if $n$ is sufficiently large.
\end{proof}
Thus the set
 \begin{equation}\label{eq:cf}
\cF:=\bigcup_{m=1}^{+\infty} \bigcap_{n\geq m}F_n
\end{equation}
satisfies $Leb_{s+3}(\cF)=1$ and the condition of Lemma \ref{lemma.G1}.
\end{proof}

We proceed to $(\cG2)$.

 \begin{lemma} \label{lemma.G2}There is a set
  $\mathbb{D}\in [0,1]^s$ of full Lebesgue measure such that for any $(c_1,\ldots,c_s)\in \mathbb{D}$, for $N$ sufficiently large 
 $$Leb_1\Big(\bigcap_{i=1}^sA_N+c_{i}\Big)\leq N^{-6}$$
 \end{lemma}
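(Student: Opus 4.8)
The plan is to show that for a single index $i$, the set $A_N + c_i$ is already small in measure except for an exceptional set of $c_i$'s, and then exploit independence over the indices to get the intersection bound. First I would recall from Proposition~\ref{lemma.delta} that $A_N$ is covered by $3N$ intervals of length $2\delta_N = 2N^{-1-\gamma/5}$, so $\Leb_1(A_N) \le 6N\delta_N = 6N^{-\gamma/5}$; in particular $\Leb_1(A_N+c_i) = \Leb_1(A_N)$ is small but not nearly as small as $N^{-6}$, so a single translate cannot suffice and we genuinely need to use $s = 50/\gamma$ independent translations.

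\textbf{Fubini over the parameter cube.} The key observation is that for a fixed $N$,
$$
\int_{[0,1]^s} \Leb_1\Big(\bigcap_{i=1}^s (A_N+c_i)\Big)\, dc_1\cdots dc_s
= \int_{\T} \prod_{i=1}^s \Big(\int_0^1 \one_{A_N}(x-c_i)\, dc_i\Big)\, dx
= \Leb_1(A_N)^s,
$$
since $\int_0^1 \one_{A_N}(x-c_i)\,dc_i = \Leb_1(A_N)$ independently of $x$. With the bound $\Leb_1(A_N) \le 6N^{-\gamma/5}$ and $s = 50/\gamma$ this gives $\Leb_1(A_N)^s \le (6)^{50/\gamma} N^{-10}$. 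Then I would apply Chebyshev/Markov: the set of tuples $(c_1,\ldots,c_s)$ for which $\Leb_1\big(\bigcap_{i=1}^s (A_N+c_i)\big) > N^{-6}$ has measure at most $6^{50/\gamma} N^{-10}/N^{-6} = 6^{50/\gamma} N^{-4}$, which is summable in $N$. By Borel--Cantelli, for a.e. tuple $(c_1,\ldots,c_s)$ the inequality $\Leb_1\big(\bigcap_{i=1}^s (A_N+c_i)\big) \le N^{-6}$ holds for all sufficiently large $N$, so the full-measure set $\bbD$ is the union over $m$ of the intersection over $N \ge m$ of these good tuples.

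\textbf{Main obstacle.} The only real point requiring care is whether the measure bound $\Leb_1(A_N) \le 6N^{-\gamma/5}$ is strong enough: we need $\Leb_1(A_N)^s \le N^{-6-\delta}$ for some $\delta>0$ so that after dividing by $N^{-6}$ we still get a summable series, and this forces $s \cdot \gamma/5 > 6$, i.e. $s > 30/\gamma$; the choice $s = 50/\gamma$ comfortably satisfies this (the extra slack is presumably there to absorb the other conditions $(\cG1)$ and $(\cG3)$ and the passage from $s$ to $s+3$). One should double-check that Proposition~\ref{lemma.delta} indeed applies to $\bar f_0$ with the stated $\delta_N$ (it does, by definition of $A_N$ in \eqref{eq:an0}), and that the constant $6^{50/\gamma}$, while large, is harmless since it is independent of $N$. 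I would also note that the argument is robust to replacing the exceptional threshold $N^{-6}$ by anything polynomially larger than $\Leb_1(A_N)^s$, so there is no delicacy in the exponents beyond the one inequality $s\gamma/5 > 6$.

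\begin{proof}
Put $\delta_N = N^{-1-\gamma/5}$. By Proposition~\ref{lemma.delta} the set $A_N$ is contained in a union of $3N$ intervals of length $2\delta_N$, hence
$$
\Leb_1(A_N) \le 6N\delta_N = 6N^{-\gamma/5}.
$$
Since Lebesgue measure on $\T$ is translation invariant, for any fixed $x\in\T$ and any $i$ we have $\int_0^1 \one_{A_N}(x-c_i)\,dc_i = \Leb_1(A_N)$. Therefore, by Fubini's theorem,
$$
\int_{[0,1]^s}\Leb_1\Big(\bigcap_{i=1}^s (A_N+c_i)\Big)\,dc_1\cdots dc_s
= \int_{\T}\prod_{i=1}^s\Big(\int_0^1\one_{A_N}(x-c_i)\,dc_i\Big)\,dx
= \Leb_1(A_N)^s.
$$
Recalling $s = 50/\gamma$ and $\Leb_1(A_N)\le 6N^{-\gamma/5}$, we obtain
$$
\int_{[0,1]^s}\Leb_1\Big(\bigcap_{i=1}^s (A_N+c_i)\Big)\,dc_1\cdots dc_s \le 6^{50/\gamma}\,N^{-10}.
$$
Let
$$
B_N := \Big\{(c_1,\ldots,c_s)\in[0,1]^s \;:\; \Leb_1\Big(\bigcap_{i=1}^s (A_N+c_i)\Big) > N^{-6}\Big\}.
$$
By Markov's inequality,
$$
\Leb_s(B_N) \le \frac{6^{50/\gamma}N^{-10}}{N^{-6}} = 6^{50/\gamma}\,N^{-4},
$$
so $\sum_N \Leb_s(B_N) < \infty$. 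By the Borel--Cantelli lemma, the set
$$
\bbD := \bigcup_{m=1}^{\infty}\bigcap_{N\ge m}\big([0,1]^s\setminus B_N\big)
$$
has full Lebesgue measure in $[0,1]^s$, and for every $(c_1,\ldots,c_s)\in\bbD$ there is $m$ with
$\Leb_1\big(\bigcap_{i=1}^s (A_N+c_i)\big) \le N^{-6}$ for all $N\ge m$. This is exactly the assertion of the lemma.
\end{proof}
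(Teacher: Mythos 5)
Your proof is correct and follows essentially the same route as the paper: the same Fubini/translation-invariance computation showing the average of $\Leb_1\big(\bigcap_{i=1}^s(A_N+c_i)\big)$ over the parameter cube equals $\Leb_1(A_N)^s$, the same covering bound from Proposition~\ref{lemma.delta}, and the same Markov--Borel--Cantelli conclusion. (One quibble with your heuristic discussion only: summability after dividing by $N^{-6}$ requires $s\gamma/5>7$, not merely $s\gamma/5>6$; your actual proof, with $s\gamma/5=10$, is unaffected.)
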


\begin{proof}
We have  the following, where addition is mod $1$.

\medskip 

\noindent {\sc Claim.} Let $N\in \N$, $\delta>0$, $b_1,\ldots b_N\in \T$ and let $A\subset \T$ be such that\\
 $\DS A\subset \bigcup_{i=1}^N(-\delta+b_i,b_i+\delta)$. Then for any $s\in \N$
\begin{equation}
\label{FewTranslates}
\int_{[0,1]^s}Leb\Big( \bigcap_{i=1}^s(A+t_i)\Big)dt_1\ldots dt_s{\leq} (2N\delta)^{s}.
\end{equation}

\begin{proof}[Proof of the claim]
 The LHS of \eqref{FewTranslates}
 equals to
$\DS \int_{\T^{s+1}} \prod_{i=1}^s 1_{-A} (t_j-x) dt_1\dots dt_s dx=
\Leb(A)^s 
$
where the  equality uses the change of variables $u_j=t_j-x.$
\end{proof}
We now apply the claim to the sets $A_N$ defined by \eqref{eq:an0}, $\delta_N=\frac{1}{N^{1+\gamma/5}}$ and  $s=\frac{50}{\gamma}$. Then, using Proposition \ref{lemma.delta} 
$$\int_{[0,1]^s}Leb\Big(\bigcap_{i=1}^s(A_N+t_i)\Big)dt_1\ldots dt_s\leq  N^{-9}.$$

So by Markov's inequality the set 
$$
B_N:=\Big\{(t_1,\ldots,t_s)\in [0,1]^s\;:\; {Leb}_s\Big(\bigcap_{i=1}^s(A_N+t_i)\Big)\leq N^{-6}\Big\},
$$
satisfies
$\DS
Leb_s\Big(B_N\Big)\geq 1-\frac{1}{N^2}.
$
We now define the set $\mathbb{D}$ as  
$$
\mathbb{D}:=\bigcup_{m=1}^{+\infty} \bigcap_{N\geq m}B_N
$$
Then $\mathbb{D}$ is a full measure subset of $[0,1]^s$ that satisfies the condition of Lemma \ref{lemma.G2}.  \end{proof}

\medskip

We finally define the set $\cC$ that satisfies the requirements of 
Proposition \ref{prop.choice.c}
 \begin{equation}\label{eq:cc}
\cC:=\Big\{(c_1,\ldots,c_{s+3})\in [0,1]^s:(c_{i_1},\ldots,c_{i_s})\in \mathbb{D} \cap \cF \text{  for every  } i_1,\ldots,i_s\in \{1,\ldots, s+3\}\Big\}.
\end{equation}

It follows from Lemmas \ref{lemma.G1} and  \ref{lemma.G2} that ${\rm Leb}(\cC)=1$ and that it satisfies $(\cG1)$ and $(\cG2)$. We still have to prove $(\cG3)$.

\begin{lemma}\label{lem:sep}
For sufficiently large $T>0$, for every ${\bf x}\in \T^2$ there exists $(i_1,\ldots,i_s)\subset \{1,\ldots,s+3\}$ such that ${\bf x}\in  S(T,(i_v)_{v=1}^s)$.
\end{lemma}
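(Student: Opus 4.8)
The plan is to fix ${\bf x}=(x,w)\in\T^2$ and $T$ large, and to show that at most three indices $j\in\{1,\dots,s+3\}$ are \emph{bad} for this pair, meaning that \eqref{eq:awaysing} or \eqref{eq:awaysing2} fails for that $j$. Since a good tuple has $s+3$ singularities, discarding the bad ones leaves at least $s$ indices; taking any $s$ of them as $i_1<\dots<i_s$ gives ${\bf x}\in S(T,(i_v)_{v=1}^s)$ straight from the definition. I would split the bad indices according to which of the two conditions they violate.

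For \eqref{eq:awaysing} the argument is immediate: because $\kappa<\tfrac12\min_{i\ne j}\|\bar c_i-\bar c_j\|$, the point ${\bf x}$ can lie in at most one ball $V_\kappa(\bar c_j)$, and likewise $K_T({\bf x})$ lies in at most one; so at most two indices are ruled out by \eqref{eq:awaysing}.

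The substantive step is to show that \eqref{eq:awaysing2} rules out at most one further index, and this is where condition $(\cG1)$ is used. Put $N=N(x,w,T)$, and suppose two distinct indices $i\ne j$ both fail \eqref{eq:awaysing2}. Then there are integers $u,u'$ with $|u|,|u'|\le 2T^\gamma$ such that $x+N\alpha+u\alpha$ is within $\tfrac1{2T^\gamma\ln^7T}$ of $c_i$ and $x+N\alpha+u'\alpha$ is within $\tfrac1{2T^\gamma\ln^7T}$ of $c_j$; subtracting, $c_i$ lies within $\tfrac1{T^\gamma\ln^7T}$ of $c_j+k\alpha$ for $k=u-u'$ with $|k|\le 4T^\gamma$. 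Now choose $n=n(T)$ to be the unique index with $q_n\le 4T^\gamma<q_{n+1}$. Then $|k|\le 4T^\gamma<q_{n+1}\le 2q_{n+1}$, and — since $q_n\le 4T^\gamma$ forces $\ln q_n\le\ln T$ for $T$ large — also $q_n\ln^5 q_n\le 4T^\gamma\ln^5T\le T^\gamma\ln^7T$, so $\tfrac1{T^\gamma\ln^7T}\le\tfrac1{q_n\ln^5 q_n}$. Hence $c_i$ lies in the $k$-th interval of the union defining $G(c_j,n)$, i.e. $c_i\in G(c_j,n)$; since also $c_i\in G(c_i,n)$ (the $k=0$ interval of $G(c_i,n)$ contains $c_i$), we conclude $G(c_i,n)\cap G(c_j,n)\ne\emptyset$. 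As $n(T)\to\infty$ when $T\to\infty$, this contradicts $(\cG1)$ once $T$ is large enough. Therefore at most one index is excluded by \eqref{eq:awaysing2}.

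Adding the two bounds, at most $2+1=3$ indices are bad, which completes the proof. I expect the last part — the use of $(\cG1)$ — to be the only real obstacle: one must choose $n=n(T)$ correctly and then match the scale $\tfrac1{T^\gamma\ln^7T}$ against $\tfrac1{q_n\ln^5 q_n}$ and the index range $4T^\gamma$ against $2q_{n+1}$, using $q_n\le 4T^\gamma<q_{n+1}$ (here the Diophantine condition $\alpha\in\cD$, already built into $(\cG1)$, keeps $\ln q_n$ comparable with $\ln T$), and finally observe that $c_i$ itself witnesses the non-empty intersection so that $(\cG1)$ applies.
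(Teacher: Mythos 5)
Your proposal is correct and follows essentially the same route as the paper: at most one index is excluded by ${\bf x}\in V_\kappa(\bar c_j)$, one by $K_T{\bf x}\in V_\kappa(\bar c_{j'})$, and one by \eqref{eq:awaysing2}, the last via the observation that two such indices would force $G(c_{j_1},n)\cap G(c_{j_2},n)\neq\emptyset$ for $n$ with $q_n$ comparable to $T^\gamma$, contradicting $(\cG1)$. Your bookkeeping of the scales ($|k|\le 4T^\gamma\le 2q_{n+1}$ and $1/(T^\gamma\ln^7T)\le 1/(q_n\ln^5q_n)$) is in fact slightly more careful than the paper's.
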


\begin{proof} Notice that there is at most one $c_j$ such that  ${\bf x}\in V_\kappa(\bar{c}_j)$ and at most one $c_{j'}$ such that  ${K_T}{\bf x}\in V_\kappa(\bar{c}_{j'})$. It is thus enough to show that there is at most one $c_{j''}$ such that  
$$
\{x+N(x,w,T)\alpha+u\alpha\}_{u\in [-T^\gamma,T^\gamma]} \in  \Big[-\frac{1}{2T^\gamma\ln^{10}T}+ c_{j''}, c_{j''}+\frac{1}{2T^\gamma\ln^{10}T}\Big].
$$
Indeed, if there were two different $c_{j_1}$ and $c_{j_2}$, then for some $|k|\leq 2T^\gamma$, 
$$
\|c_{j_1}-c_{j_2}+k\alpha\|\leq \frac{1}{4T^\gamma\ln^{10}T}.
$$
Let $n\in \N$ be unique such that $T^\gamma \in [q_n,q_{n+1}]$. The above condition would, by the fact that $\alpha\in \cD$ imply that (see \eqref{eq:gcn})
$\DS
G(c_{j_1},n)\cap G(c_{j_2},n)\neq \emptyset.
$
This however contradicts the fact that any $s$- tuple of coordinates of the vector $(c_1,\ldots, c_{s+3})$ belongs to $\cF$ and in particular to $F_n$ if $n$ is sufficiently large (see \eqref{eq:fn}).
\end{proof}

Proposition  \ref{prop.choice.c} follows from Lemmas \ref{lemma.G1}, \ref{lemma.G2} and \ref{lem:sep}. \hfill $\Box$

\subsection{Proof of Proposition \ref{prop.choice.tau}}
\label{SSprop.choice.tau}

From now on we assume that $(K_t)$ is as in \eqref{fsum} with $(c_1,\ldots,c_{s+3})$ {\it good}.

For $T>0$ and ${\bf x}=(x,w)\in \T^2$ recall that $N(x,w,T)$ denotes the number of returns of $\{K_t(x,w)\}_{t\leq T}$ to
 $\T\times \{0\}$. Then $N(x,w,T)\leq C T$, where $C=(\inf_\T f)^{-1}$.
 
The following lemma allows us to relate the ergodic sums of $\tau_j$ to those of $\bar f_0(\cdot-c_j)$. It relies on the fact that $\tau_j$ equals $1$ in a neighborhood of $\bar{c}_j$ and equals $0$ in the neighborhood of $\bar{c}_i$, for $i\neq j$.  The control of the error is due to the fact that $\a \in \cD$. 

\begin{lemma}
\label{LmErgSumRetT}
Let $\tau=\tau_j$ with $j\in \{1,\ldots, s+3\}$. Then for $T$ sufficiently large
$$
\Big|\sum_{u=0}^{N(x,w,T)-1}\int_0^{f(x+u\alpha)}\tau(K_t(x+u\alpha,0))dt-\sum_{u=0}^{N(x,w,T)-1}\bar{f}_0(x+u\alpha-c_j)\Big|\leq \ln^4T. 
$$
where $\bar{f}_0=\bar{f}-\int_\T \bar{f} dLeb$ and $\bar{f}$ is as in \eqref{eq:asy2}.
\end{lemma}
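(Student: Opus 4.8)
The statement we must prove is Lemma~\ref{LmErgSumRetT}, which compares the ergodic sum (along returns to the transversal $\cT$) of the time-integral $\int_0^{f(x+u\alpha)}\tau_j(K_t(x+u\alpha,0))\,dt$ with the ergodic sum $\sum_{u}\bar f_0(x+u\alpha-c_j)$. The key observation is that along a single orbit segment $\{K_t(x+u\alpha,0)\}_{0\le t< f(x+u\alpha)}$ the time spent inside the $\kappa$-neighborhood $V_\kappa(\bar c_i)$ of any singular point is comparable to a fixed constant (independent of $u$), \emph{except} when $x+u\alpha$ is extremely close to the projection $c_i$, in which case the orbit passes near the degenerate saddle and spends a long time there. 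Since $\tau_j\equiv 1$ on $V_\kappa(\bar c_j)$ and $\tau_j\equiv 0$ on $V_\kappa(\bar c_i)$ for $i\ne j$, the integral $\int_0^{f(x+u\alpha)}\tau_j\,dt$ equals the time the $u$-th orbit arc spends inside $V_\kappa(\bar c_j)$, plus a uniformly bounded contribution from the region where all $\tau_i$ transition between $0$ and $1$ (a region of fixed size, traversed in uniformly bounded time because the flow is smooth and non-degenerate away from the saddles). Thus $\int_0^{f(x+u\alpha)}\tau_j\,dt = (\text{time near }\bar c_j) + O(1)$ per return.

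\textbf{Main steps.} First I would make precise the decomposition $f(\theta)=\sum_{i=1}^{s+3}\bar f(\theta-c_i)$ from \eqref{fsum}: the summand $\bar f(\theta-c_j)$ is (up to a bounded error coming from the smooth gluing and the normalization $\int\bar f\,dLeb=1$) exactly the passage time of the orbit through the chart around $\bar c_j$, and away from $\theta=c_j$ the function $\bar f(\theta-c_j)$ is bounded, while near $\theta=c_j$ it blows up like $\|\theta-c_j\|^{-\gamma}$ in accordance with \eqref{eq:asy2}. Second, I would show that the time the $u$-th orbit arc spends inside $V_\kappa(\bar c_j)$ differs from $\bar f(x+u\alpha-c_j)$ by a quantity bounded uniformly in $u$ (the discrepancy between ``time in the $\kappa$-ball'' and ``time in the whole gluing chart'' is the time spent in an annular region of fixed geometry, hence $O(1)$, with the extra $O(1)$ per return coming also from the other $\bar f(\cdot - c_i)$, $i\ne j$, terms in $f$). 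Summing over $u=0,\dots,N(x,w,T)-1$ this gives
\[
\Big|\sum_{u=0}^{N(x,w,T)-1}\int_0^{f(x+u\alpha)}\tau_j(K_t(x+u\alpha,0))\,dt-\sum_{u=0}^{N(x,w,T)-1}\bar f(x+u\alpha-c_j)\Big|\le C\, N(x,w,T)\le C'T,
\]
which is \emph{not} good enough: we need error $\le \ln^4 T$, not $O(T)$. So the real content is that the per-return $O(1)$ errors must \emph{cancel} or telescope, not merely be summed. This is where the Diophantine condition $\alpha\in\cD$ enters: one replaces $\bar f(x+u\alpha-c_j)$ by $\bar f_0(x+u\alpha-c_j)+\int\bar f\,dLeb$ and notes that for \emph{each} $i$, by Lemma~\ref{lem:DK0} the sum $\sum_u \bar f(x+u\alpha-c_i)$ equals $N\int\bar f\,dLeb + \bar f(\text{closest point to }c_i) + O(N^\gamma\ln^5 N)$; the genuinely large contribution along the whole $T$-orbit comes from at most one close approach to each singular projection $c_i$, and the ``time near $\bar c_j$'' summed over returns matches $\sum_u\bar f(x+u\alpha-c_j)$ up to a bounded number of such close-approach events, each contributing the same leading term to both sides, so the difference is controlled by the sum of the bounded-variation/regular parts, which by Lemma~\ref{LmSumBV} and the Denjoy--Koksma-type estimate with $\alpha\in\cD$ is $O(\ln^4 T)$.

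\textbf{Expected main obstacle.} The crux — and the step I expect to be trickiest — is the second one: rigorously identifying ``time spent by the $u$-th arc inside $V_\kappa(\bar c_j)$'' with $\bar f(x+u\alpha - c_j)$ up to a genuinely \emph{summable} (not merely bounded-per-return) error, so that the accumulated discrepancy over $\sim T$ returns stays $O(\ln^4 T)$. Concretely one must argue that the difference, as a function of the return point $\theta=x+u\alpha$, is itself a function of bounded variation on $\T$ (the transition region has fixed geometry and the flow is $C^3$ there), so that its ergodic sum over the rotation $R_\alpha$ is $O(\ln^4 T)$ by the Denjoy--Koksma inequality for $\alpha\in\cD$ (Lemma~\ref{LmSumBV}); and simultaneously one must handle the few returns that actually enter the singular chart near $\bar c_j$, where one uses \eqref{eq:asy2} and the fact that $\alpha\in\cD$ (so that $x+u\alpha$ is never closer than $\sim 1/q_{n+1}\ge c/(q_n\ln^2 q_n)$ to $c_j$ for $u\le T$) to see that these returns contribute the \emph{same} $\bar f(x^N_{min,j})$-type term to both sides up to an $O(N^\gamma\ln^5 N)$ discrepancy absorbed in the right-hand side. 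Once this bounded-variation-plus-one-close-approach structure is in place, the estimate $\le\ln^4 T$ follows from Lemma~\ref{lem:DK0} applied to each $\bar f_0(\cdot-c_i)$ and the Denjoy--Koksma bound of Lemma~\ref{LmSumBV}.
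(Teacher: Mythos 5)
Your central idea --- that the per-return difference $\varphi_{\tau_j}(\theta):=\int_0^{f(\theta)}\tau_j(K_t(\theta,0))\,dt$ minus $\bar f(\theta-c_j)$ should be exhibited as a function of bounded variation on $\T$, so that its ergodic sum under $R_\alpha$ is $O(\ln^4 N)$ by the Denjoy--Koksma estimate of Lemma~\ref{LmSumBV} for $\alpha\in\cD$ --- is exactly the paper's proof. The paper implements it by writing, with $g_\kappa=1_{[-\kappa,\kappa]}\,g$ and $g^\kappa=g-g_\kappa$,
$$
\varphi_{\tau_j}=(f_j)_\kappa+h_\kappa+(\varphi_{\tau_j})^\kappa,
\qquad
h_\kappa(\theta)=\int_0^{f(\theta)}(\tau_j-1)(K_t(\theta,0))\,dt\cdot 1_{[-\kappa,\kappa]}(\theta-c_j),
$$
observing that $h_\kappa$, $(\varphi_{\tau_j})^\kappa$ and $f_j^\kappa$ all lie in $BV(\T)$ (because $\tau_j\equiv1$ on $V_\kappa(\bar c_j)$ and $\tau_j\equiv0$ on the other $V_\kappa(\bar c_i)$), matching the means via $\int_\T\varphi_{\tau_j}\,d\lambda=0$ (which comes from $\tau_j$ having zero mean on $\T^2$ --- a point you should state rather than leave implicit), and then applying Lemma~\ref{LmSumBV} to each BV piece. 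You correctly diagnose that a per-return $O(1)$ bound gives only $O(T)$ and that the Diophantine condition must enter through a Denjoy--Koksma-type cancellation.

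However, the way you close the argument is wrong. You propose to treat ``the few returns that actually enter the singular chart'' separately by applying Lemma~\ref{lem:DK0} to each of the two sums, and you assert that the resulting $O(N^\gamma\ln^5N)$ discrepancy is ``absorbed in the right-hand side.'' It is not: the right-hand side of the lemma is $\ln^4T$, while $N^\gamma\ln^5N$ is polynomially large in $T$ for the relevant range $N\geq T^{1-\eps}$ (and indeed the entire purpose of this lemma, as used in \eqref{eq.00} and Corollary~\ref{LmErgSumRetT2}, is to obtain a purely polylogarithmic error, which a termwise application of Lemma~\ref{lem:DK0} cannot deliver). The point you are missing is that no separate treatment of close approaches is needed: because $\tau_j\equiv1$ on the \emph{entire} $\kappa$-neighborhood of $\bar c_j$, the singular part of $\varphi_{\tau_j}$ at $\theta=c_j$ coincides \emph{identically} with the singular part of $\bar f(\theta-c_j)$, so the divergences cancel exactly in the difference --- not merely up to an $O(N^\gamma\ln^5N)$ remainder --- and everything left over is of bounded variation. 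Delete the appeal to Lemma~\ref{lem:DK0}, make this exact cancellation explicit via a decomposition such as the one above, and the bound $\ln^4T$ follows from Lemma~\ref{LmSumBV} alone.
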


\begin{proof} 
Denote 
$$\varphi_{\tau}(x)=\int_0^{f(x)}\tau(K_t(x,0))d{t}, \quad\text{and}\quad 
f_j=\bar{f}(x-c_j).$$

For a function $g:\T \to \R$ define 
$$g_\kappa=1_{x\in [-\kappa,\kappa]} g, \quad g^\kappa=1_{x\notin [-\kappa,\kappa]} g.$$

Since $\tau=1$ on $V_\kappa(c_j)$, we have the following identity
$$
\varphi_\tau(x)=(\varphi_\tau)_{\kappa}(x)+(\varphi_\tau)^{\kappa}(x)= (f_j)_{\kappa}(x)+ \int_{0}^{f(x)}(\tau-1)(K_tx)1_{-[\kappa,\kappa]}(x)d{t}+(\varphi_\tau)^{\kappa}(x)
$$
Notice that the functions $h_\kappa(x)=\int_{0}^{f(x)}(\tau-1)(K_tx)1_{-[\kappa,\kappa]}(x)d{t}$ and $(\varphi_\tau)^{\kappa}$ belong to the class $BV(\T)$. 
With this notation, denoting also $N=N(x,w,T)$, we need to bound  
\begin{align*} S_N \varphi_\tau-S_N f_j +N\int_{\T} f_j d\lambda&= S_N(h_\kappa)+
S_N \varphi_\tau^\kappa-S_N f_j^\kappa -N\left(\int_{\T} h_\kappa d\lambda+\int_{\T} \varphi_\tau^\kappa d\lambda-\int_{\T} f_j^\kappa d\lambda \right)
 \end{align*} 
where in the LHS we used that  $\int_{\T} \varphi_{\tau} d\lambda=0$ because $\int_{\T} \tau d\lambda=0$. 
Since $h_\kappa$,  $\varphi_{\tau}^{\kappa}$ and $f_j^{\kappa}$ are all of bounded variation, 
the bound of the lemma follows from Lemma \ref{LmSumBV}.
\end{proof}
 
 \noindent We will often use the following decomposition of the orbital integral: for ${\bf x}\!=\!(x,w)\!\in\! \T^2$
 
 \begin{multline}\label{mult:dec}
\int_{0}^T\tau(K_t({\bf x}))dt=\sum_{i=0}^{N(x,w,T)-1}\int_0^{f(x+i\alpha)}\tau(K_t(x+i\alpha,0))dt\\
-\int_0^w\tau(K_t(x,0))dt+ \int_0^{T+w-S_{N(x,w,T)}(f)(x)}\tau(K_t(x+N(x,w,t)\alpha,0))dt.
\end{multline}

 \begin{corollary} \label{LmErgSumRetT2}
Let $\tau=\tau_j$ with $j\in \{1,\ldots, s+3\}$. Then with $N=N(x,w,T)$
\begin{align*}
\left|\int_{0}^T\tau(K_t({\bf x}))dt-\sum_{i=0}^{N-1} \bar{f}_0(x+u\alpha-c_j) \right| &\leq C\cdot\Big(|\bar{f}(x-c_j)|+|\bar{f}(x+N\alpha-c_j)|+\ln^4T\Big)\end{align*}
 \end{corollary}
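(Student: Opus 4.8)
The plan is to chain together Lemma~\ref{LmErgSumRetT} with the orbital integral decomposition \eqref{mult:dec}, controlling the two ``boundary'' terms by the size of $\bar f$ at the endpoints of the orbit.

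First I would start from the decomposition \eqref{mult:dec}: writing $N=N(x,w,T)$,
$$
\int_0^T\tau(K_t(\bx))dt=\sum_{i=0}^{N-1}\int_0^{f(x+i\alpha)}\tau(K_t(x+i\alpha,0))dt-\int_0^w\tau(K_t(x,0))dt+\int_0^{T+w-S_N(f)(x)}\tau(K_t(x+N\alpha,0))dt.
$$
The main sum is handled by Lemma~\ref{LmErgSumRetT}, which says it differs from $\sum_{i=0}^{N-1}\bar f_0(x+i\alpha-c_j)$ by at most $\ln^4 T$. So it remains to bound the first and last ``partial orbit'' integrals. Each of these is an integral of $\tau=\tau_j$, which is bounded (say by $1$, or some fixed constant depending only on $\tau_j$), over a time interval of length at most $f(x)$, respectively $f(x+N\alpha)$. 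Indeed $0\le w<f(x)$ and $0\le T+w-S_N(f)(x)<f(x+N\alpha)$ by the very definition of $N(x,w,T)$. Hence
$$
\Big|\int_0^w\tau(K_t(x,0))dt\Big|\le \|\tau_j\|_\infty\, f(x),\qquad \Big|\int_0^{T+w-S_N(f)(x)}\tau(K_t(x+N\alpha,0))dt\Big|\le \|\tau_j\|_\infty\, f(x+N\alpha).
$$
Then I would use \eqref{fsum}, $f(\cdot)=\sum_{i=1}^{s+3}\bar f(\cdot-c_i)$, together with the fact that $\bar f$ is bounded away from the singularities and that the $c_i$ are distinct, to see that $f(x)\le C(|\bar f(x-c_j)|+1)$ and similarly $f(x+N\alpha)\le C(|\bar f(x+N\alpha-c_j)|+1)$ for a constant $C$ depending only on the tuple $(c_1,\dots,c_{s+3})$ (the other summands $\bar f(x-c_i)$, $i\neq j$, either are uniformly bounded, when $x$ is away from all $c_i$, or one of them blows up, but in that case it is precisely the term we are keeping track of — actually only one $\bar f(x-c_i)$ can be large at a time, and if it is $\bar f(x-c_i)$ with $i\ne j$ then near that singularity $\tau_j\equiv 0$ so the partial-orbit integral vanishes there too; I would phrase the bound so as to absorb all of this into the stated right-hand side). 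Combining the three estimates and noting $|\bar f_0(x+i\alpha-c_j)|$ differs from $|\bar f(x+i\alpha-c_j)|$ by the constant $\int\bar f$, which gets absorbed, yields
$$
\Big|\int_0^T\tau(K_t(\bx))dt-\sum_{i=0}^{N-1}\bar f_0(x+i\alpha-c_j)\Big|\le C\big(|\bar f(x-c_j)|+|\bar f(x+N\alpha-c_j)|+\ln^4 T\big),
$$
which is exactly the claim.

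I expect no serious obstacle here: the corollary is essentially bookkeeping on top of Lemma~\ref{LmErgSumRetT}. The only point requiring a little care is the passage from ``integral of $\tau_j$ over a sub-orbit of length $\le f(\text{endpoint})$'' to the right-hand side expressed in terms of $\bar f$ alone rather than the full roof $f$; this is where one uses the structure \eqref{fsum'} of $f$, the separation of the singularities $\{c_i\}$, and the fact that $\tau_j$ is supported near $\bar c_j$ and vanishes near the other $\bar c_i$. All constants depend only on $(c_1,\dots,c_{s+3})$ and $\|\tau_j\|_\infty$, which is harmless.
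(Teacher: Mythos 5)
Your proposal is correct and follows essentially the same route as the paper: decompose the orbital integral via \eqref{mult:dec}, apply Lemma~\ref{LmErgSumRetT} to the main sum, and control the two partial-fiber integrals using $0\le w<f(x)$, $0\le T+w-S_N(f)(x)<f(x+N\alpha)$, and the fact that $\tau_j$ vanishes near $\bar c_i$ for $i\neq j$. Your extra care on the point that $f(x)$ itself can be large near a singularity $c_i$ with $i\neq j$ (resolved exactly as you say, by the vanishing of $\tau_j$ there) is the content of the paper's one-line remark, so there is no discrepancy.
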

 
 \begin{proof} {We use \eqref{mult:dec}.  By Lemma \ref{LmErgSumRetT}, we can
  replace the first term of the LHS by $\DS \sum_{i=0}^{N-1} \bar{f}_0(x+u\alpha-c_j)$. For the last two terms 
  in \eqref{mult:dec}
 we} use that $\tau_j$ vanishes in the neighborhoods of $\bar{c}_i$ for $i\neq j$, that 
 $ w \in [0,f(x))$ and $T+w-S_Nf(x)\in [0,f(x+N\alpha))$.
 \end{proof}

We will split the proof of Proposition \ref{prop.choice.tau} into two cases: {\bf Case 1} in which we will consider points $(x,w)$ with very close visits to the set of singularities, and {\bf Case 2} which covers the complimentary points. Fix $\eps\ll 1$. 
\smallskip

  \noindent {\bf Case 1.} $N(x,w,T)<T^{1-\eps}$.

\begin{proposition} \label{prop.Nsmall}  If $(x,w)$ is such that $N(x,w,T)<T^{1-\eps}$ then 
 \begin{equation} \label{eq.TT} \max_{j\in \{1,\ldots,s+3\}} \int_0^T\tau_j(K_tx)dt \geq \eps^2 T.\end{equation}
\end{proposition}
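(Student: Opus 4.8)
The key point is that when the number of returns $N=N(x,w,T)$ is much smaller than $T$, the orbit piece $\{K_t(x,w)\}_{t\le T}$ must spend a very long time, of total length $\gtrsim T$, in a neighborhood of one of the degenerate saddles. Indeed, each return to $\cT$ corresponds to a first-return time $f(x+u\alpha)$, and since $f$ is bounded away from $0$ (recall $C=(\inf_\T f)^{-1}$), the bulk of the orbit length $T$ is accounted for by the $N$ return times $f(x+u\alpha)$, $0\le u<N$, plus the two boundary pieces. Thus $\sum_{u=0}^{N-1} f(x+u\alpha) \ge T - \max f \ge T - O(1)$ on the relevant range, and if $N<T^{1-\eps}$ then the \emph{average} return time $\frac1N\sum_{u<N}f(x+u\alpha)$ exceeds $T^{\eps}/C$; hence at least one of the return times, say the one at $u=u_\ast$, satisfies $f(x+u_\ast\alpha)\ge \frac1C T^{\eps}$ (in fact a constant fraction of $\le N$ of them can be forced to be large, by a pigeonhole/Markov argument, but one very large excursion already suffices modulo bookkeeping).

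First I would use the decomposition \eqref{fsum}: since $f(\theta)=\sum_{i=1}^{s+3}\bar f(\theta-c_i)$ and each $\bar f\ge 0$ with a single power singularity at $0$ of exponent $\gamma$, a large value $f(x+u_\ast\alpha)\ge \frac1C T^\eps$ forces $x+u_\ast\alpha$ to be within distance $\delta$ of some $c_{j}$, where $\delta$ is comparable to $(T^\eps)^{-1/\gamma}$ (from $\bar f(\theta)\sim |\theta-c_j|^{-\gamma}$ near $c_j$). Moreover, because the other summands $\bar f(\cdot-c_i)$ with $i\ne j$ stay bounded near $c_j$ (the $c_i$ are distinct), essentially all of the excursion length at step $u_\ast$, i.e.\ at least $\frac{1}{2C} T^\eps$ say, is due to the single term $\bar f(x+u_\ast\alpha-c_j)$. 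Over this excursion, the flow $K_t$ travels through the disc $V_\kappa(\bar c_j)$ (the inserted Hamiltonian-flow region around the saddle $\bar c_j$), and $\tau_j\equiv 1$ throughout that disc while $\tau_j\equiv 0$ near the other saddles. Hence $\int_{0}^{f(x+u_\ast\alpha)}\tau_j(K_t(x+u_\ast\alpha,0))\,dt$ is at least the time spent in $V_\kappa(\bar c_j)$, which is the excursion length minus a bounded amount (the time to cross the linear-flow part outside the disc), so it is $\ge \frac{1}{4C}T^\eps$, say.

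To conclude \eqref{eq.TT} I need to turn this single large contribution to $\int_0^T\tau_j(K_t\bx)\,dt$ into a lower bound of order $T$, not just $T^\eps$; this is where one uses that $N$ is small \emph{and} does the pigeonhole more carefully. The point is that the same reasoning applies to \emph{all} of the $N$ return times simultaneously: among the $N<T^{1-\eps}$ steps $u$, the total contribution $\sum_{u<N} f(x+u\alpha) \ge T-O(1)$ must be distributed, and for each step $u$ with $f(x+u\alpha)$ large, the full value (minus $O(1)$) comes from a \emph{single} term $\bar f(x+u\alpha-c_{j(u)})$ for a well-defined index $j(u)\in\{1,\dots,s+3\}$ (the nearest singularity). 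Summing over $u$ and grouping by the value of $j(u)$, there is a fixed index $j$ for which $\sum_{u:\,j(u)=j} \bar f(x+u\alpha-c_j) \ge \frac{1}{s+3}(T-O(1)) - O(N)\ge \frac{1}{2(s+3)}T$ (the $O(N)=O(T^{1-\eps})$ absorbing the bounded contributions of the non-nearest singularities at each step). Since for each such $u$ the integral $\int_0^{f(x+u\alpha)}\tau_j(K_t(x+u\alpha,0))\,dt \ge \bar f(x+u\alpha-c_j)-O(1)$ (time in $V_\kappa(\bar c_j)$ equals the $\bar f$-excursion minus the bounded crossing time of the disc), summing and using \eqref{mult:dec} to pass from returns to the orbital integral (the two boundary terms are $O(f)$, hence negligible once we have a term of order $T$) gives $\int_0^T \tau_j(K_t\bx)\,dt \ge \frac{1}{2(s+3)}T - O(T^{1-\eps}) \ge \eps^2 T$ for $T$ large, since $s=50/\gamma$ is a fixed constant and $\eps$ is small. \textbf{The main obstacle} is the careful bookkeeping in this last step: cleanly separating, for each return $u$, the ``big'' term $\bar f(x+u\alpha-c_{j(u)})$ from the $O(1)$ total contribution of all other singularities, and making sure the accumulated error over $N\le CT$ returns does not swamp the main term — which is exactly why one needs $N<T^{1-\eps}$ rather than merely $N<CT$, and why the constant is $\eps^2 T$ rather than something explicit.
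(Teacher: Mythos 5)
Your starting point --- that $N(x,w,T)<T^{1-\eps}$ forces a long excursion near some saddle, hence a long sojourn in a region where some $\tau_j\equiv 1$ --- is the same as the paper's (its Lemma \ref{lemma.Nsmall2}), and your pigeonhole over the $s+3$ singularities is a workable substitute for the paper's use of the concentration estimate of Lemma \ref{lem:DK0} \emph{when the long excursion occurs at an interior complete return}. However, there is a genuine gap, and it concerns exactly the configurations that occupy most of the paper's proof. Your foundational inequality $\sum_{u=0}^{N-1}f(x+u\alpha)\geq T-\max f\geq T-O(1)$ is false: $f$ has power singularities, so $\sup f=\infty$, and the defining relation $0\leq w+T-S_Nf(x)<f(x+N\alpha)$ only gives $S_Nf(x)>T-f(x+N\alpha)$. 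If the orbit at time $T$ sits deep inside the last, incomplete excursion (i.e.\ $x+N\alpha$ is within $\sim T^{-1/\gamma}$ of some $c_{j'}$), then $f(x+N\alpha)$ is itself of order $T$ and $S_Nf(x)=o(T)$; the symmetric situation occurs at the start when $f(x)-w$ is of order $T$. In these regimes the complete returns carry almost none of the mass and your pigeonhole yields nothing; one must argue directly about the partial fibers, which is what the paper's Cases II.1--II.3 do.

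Relatedly, the two boundary terms in \eqref{mult:dec} are not ``$O(f)$, hence negligible'': they are bounded by $f(x)$ and $f(x+N\alpha)$, which can be of order $T$, and they can \emph{cancel} your main term. For instance, if the dominant excursion is the fiber over $x$ itself ($u=0$) but the orbit starts at a large height $w$, the full-fiber quantity $\bar f(x-c_j)\approx T$ enters your sum while the actual orbit only sees the part above $w$, and the subtracted term $\int_0^w\tau_j(K_t(x,0))\,dt$ removes most of it. The paper's Corollary \ref{LmErgSumRetT2} records these errors explicitly as $|\bar f(x-c_j)|+|\bar f(x+N\alpha-c_j)|$, and its Case I uses $\alpha\in\cD$ to show that when an interior visit is within $\eps^{-1}T^{-1/\gamma}$ of $c_j$, the endpoints $u=0,N$ stay at distance $\geq N^{-1-\eps}$ from $c_j$, so the boundary terms are $o(T)$. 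Your argument contains no analogue of this endpoint control. To repair the proof you would need to add the case analysis on where the closest approach to $\{c_j\}$ occurs (interior return versus the two partial fibers), treat the partial-fiber cases by a direct sojourn-time argument, and in the interior case invoke the Diophantine separation of the endpoints.
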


In all of this section we sometimes simply denote $N(x,w,T)$ by $N$. First observe the following.

\begin{lemma} \label{lemma.Nsmall2} For any $\eps>0$, if $T$ is sufficiently large and  $N(x,w,T)<T^{1-\eps}$, there exists $j\in \{1,\ldots, s+3\}$ and $u\in [0,N(x,w,T)]$ such that $\|x+u\alpha-c_j\|\leq \frac{1}{\eps T^{1/\gamma}}$. 
\end{lemma}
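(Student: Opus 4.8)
The statement to prove is Lemma~\ref{lemma.Nsmall2}: if $N:=N(x,w,T)<T^{1-\eps}$ then some orbit point $x+u\alpha$ with $0\le u\le N$ comes within $\frac{1}{\eps T^{1/\gamma}}$ of some singularity $c_j$. The plan is to argue by contradiction: suppose that for every $j\in\{1,\dots,s+3\}$ and every $u\in[0,N]$ we have $\|x+u\alpha-c_j\|>\frac{1}{\eps T^{1/\gamma}}$. The point of this separation hypothesis is that it bounds each term $\bar f(x+u\alpha-c_j)$ in the Birkhoff sum of the roof function from above. Indeed, near $0$ the function $\bar f$ blows up like $\theta^{-\gamma}$ (integrate \eqref{eq:asy2} twice), so the separation $\|x+u\alpha-c_j\|>\frac{1}{\eps T^{1/\gamma}}$ gives $\bar f(x+u\alpha-c_j)\le \Const\cdot(\eps T^{1/\gamma})^{\gamma}=\Const\cdot\eps^{\gamma}T$, and away from the singularities $\bar f$ is bounded anyway. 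Summing \eqref{fsum'} over the at most $N<T^{1-\eps}$ returns then yields $S_N(f)(x)\le \Const\cdot(s+3)\,\eps^{\gamma}T\cdot T^{1-\eps}$, but this is much smaller than $T$ once $T$ is large (since $2-\eps>1$ fails — wait, I must be careful with exponents here, see below).

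The cleaner route, which I expect is the intended one, is to use the \emph{return time} bound directly rather than crude term-by-term estimates. Since $N=N(x,w,T)$ is the number of returns to $\cT$ in time $T$, by \eqref{eq.special} we have $T\ge S_N(f)(x)-f(x)\ge S_N(f)(x)-\Const\,T^{1/\gamma}$, so $S_N(f)(x)\le T+\Const\,T^{1/\gamma}$ — but this is the wrong direction. Better: $T< S_{N+1}(f)(x)=S_N(f)(x)+f(x+N\alpha)$, and also $w+T\ge S_N(f)(x)$, so $S_N(f)(x)\le T$. The real content must instead come from below: we need $S_N(f)(x)$ to be \emph{large}. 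By Lemma~\ref{lem:DK0}, $S_N(f)(x)=N\int f\,dLeb+O(\bar f(x_{\min,N}))+O(N^{\gamma}\ln^5 N)$ for each summand $\bar f(\cdot-c_i)$, so $S_N(f)(x)\le (s+3)\big(N+\max_i \bar f(x_{\min,N}^{(i)})+\Const N^{\gamma}\ln^5 N\big)$ where $x_{\min,N}^{(i)}$ is the closest approach of $\{x+u\alpha-c_i\}_{u<N}$ to $0$. Under the separation assumption each $\bar f(x_{\min,N}^{(i)})\le \Const(\eps T^{1/\gamma})^{\gamma}=\Const\,\eps^{\gamma}T$. Combining with $S_N(f)(x)\le T$ gives essentially nothing; combining with $S_N(f)(x)\ge$ (something) is what we need. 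The correct inequality is that \emph{many} returns require the sum to reach roughly $T$: since $N<T^{1-\eps}$, we have $N\int f\,dLeb<T^{1-\eps}$, and $N^\gamma\ln^5 N\ll T$; therefore $T\le S_{N+1}(f)(x)\le S_N(f)(x)+f(x+N\alpha)\le (s+3)\big(T^{1-\eps}+\Const\,\eps^\gamma T+\Const T^{(1-\eps)\gamma}\ln^5 T\big)+f(x+N\alpha)$. The only term on the right that can be of order $T$ is $f(x+N\alpha)=\sum_i\bar f(x+N\alpha-c_i)$; but if $x+N\alpha$ is separated from all $c_i$ by $\frac{1}{\eps T^{1/\gamma}}$ this is also $\le(s+3)\Const\,\eps^\gamma T$. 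So $T\le \Const(s+3)\eps^\gamma T + o(T)$, which is a contradiction once $\eps$ is chosen small enough (depending only on $s,\gamma$) and $T$ is large. Hence the separation hypothesis fails, proving the lemma.

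In writing this up carefully I would: (i) record the pointwise bound $\bar f(\theta)\le \Const\,\|\theta\|^{-\gamma}$ for $\|\theta\|$ small, which follows by integrating \eqref{eq:asy2} twice and using $\int\bar f=1$, together with the fact that $\bar f$ is bounded on any set bounded away from $0$; (ii) invoke Lemma~\ref{lem:DK0} to control $S_N\bar f_0(\cdot-c_i)$ up to $O(N^\gamma\ln^5 N)$ plus the single worst term $\bar f(x_{\min,N}^{(i)})$; (iii) use the definition of $N(x,w,T)$ to get $T< S_{N+1}(f)(x)$; (iv) assume for contradiction that all of $x,x+\alpha,\dots,x+N\alpha$ avoid the $\frac{1}{\eps T^{1/\gamma}}$-neighborhoods of all $c_j$, feed the bound from (i) into (ii) and (iii), and derive $T\le \Const(s,\gamma)\,\eps^\gamma\,T+o(T)$.

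\textbf{Main obstacle.} The delicate point is bookkeeping of exponents: one must verify that \emph{every} error term genuinely is $o(T)$ — in particular $N^\gamma\ln^5 N=O(T^{(1-\eps)\gamma}\ln^5 T)=o(T)$ since $\gamma<1$, and the "main" linear term $N\int f\,dLeb=O(T^{1-\eps})=o(T)$ — so that the only way to accumulate a roof-sum of size $\asymp T$ over fewer than $T^{1-\eps}$ steps is to pass extremely close to a singularity, and that closeness must beat the exponent $1/\gamma$ in the radius $\frac{1}{\eps T^{1/\gamma}}$ precisely so that $\bar f\big(\tfrac{1}{\eps T^{1/\gamma}}\big)\asymp (\eps T^{1/\gamma})^\gamma=\eps^\gamma T$ has a controllable constant. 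One also has to be slightly careful that the boundary terms $\int_0^w$ and the final partial roof in the decomposition \eqref{mult:dec} are absorbed — but these are each $\le f(x)$ resp.\ $\le f(x+N\alpha)$ and hence handled by the same pointwise bound on $\bar f$. No genuinely new idea beyond Lemma~\ref{lem:DK0} and the growth rate of $\bar f$ is needed; the work is entirely in choosing $\eps$ small relative to $s$ and $\gamma$ and checking the arithmetic of the exponents.
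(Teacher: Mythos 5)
Your proposal is correct and follows essentially the same route as the paper: argue by contradiction from the separation hypothesis, use the pointwise bound $\bar f(\theta)\le \Const\,\theta^{-\gamma}$ together with Lemma~\ref{lem:DK0} to get $S_{N+1}(f)(x)\le C\bigl(T^{1-\eps}+\eps^{\gamma}T\bigr)+o(T)$, and contradict $T\le S_{N+1}(f)(x)$ once $\eps$ is small. The exponent bookkeeping you flag as the main obstacle is exactly what the paper's one-line chain of inequalities carries out.
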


\begin{proof} Assume that for any $j\in \{1,\ldots, s+3\}$ and any $u\leq N$,  $\|x+u\alpha-c_j\|\geq \eps^{-1} T^{-1/\gamma}$. This can be expressed as $x_{min,N+1}\geq \eps^{-1} T^{-1/\gamma}$.

Observe that by definition $0\leq T+w-S_{N}f(x)\leq f(x+N\alpha)$. By Lemma \ref{lem:DK0} (applied to all the functions $\bar{f}(\cdot-c_j)$) it follows that if $\eps$ is sufficiently small
$$
T\leq S_{N+1}(f)(x)\leq C(N+ f(x_{min,N+1}))\leq  CT^{1-\eps}+ 2\eps^\gamma T\leq T/2,$$
a contradiction.\end{proof} 

\begin{proof}[Proof of Proposition \ref{prop.Nsmall}] 

We will split the proof in several cases. 
\medskip 

\noindent {\it Case I. There is $j\in \{1,\ldots,s+3\}$ and $u\in [1,N-1]$ such that $\|x+u\alpha-c_j\|\leq \eps^{-1} T^{-\frac{1}{\gamma}}$}. 

Lemma \ref{lem:DK0} applied to the function $f_j:=\bar{f}_0(\cdot-c_j)$ then implies that 
\begin{equation} \label{eqj} S_{N}f_j(x)\geq \eps^\gamma T/2.
\end{equation}

Since $\alpha \in \cD$, we have that for $u'\in \{0,N\}$, $\|x+u'\alpha-c_j\|\geq N^{-1-\eps} \geq (CT)^{-1-\eps}$. Corollary \ref{LmErgSumRetT2} and equation \eqref{eqj} then imply 
 \begin{equation}
 \label{eq.j2} \int_0^T\tau_j(K_tx)dt  \geq \eps^\gamma T/4.\end{equation}

\medskip

\noindent {\it Case II. For any $j\!\in\!\{1,\ldots,s+3\}$ and any $u\in [1,N-1]$ we have $\|x+u\alpha-c_j\|> \eps^{-1} T^{-\frac{1}{\gamma}}$}.

We split this case in several sub-cases. 

\medskip 
\noindent{\it Case II.1. There exists $j\!\in\!\{1,\ldots,s+3\}$ such that  $\|x-c_j\|\leq \eps^{-1} T^{-1/\gamma}$ 
and $f(x)\!-\!w\!>\!\eps T$.}

By the diophantine assumption $\alpha \in \cD$ it follows that for any sufficiently large $M\in \N$ and for any $j$ the set $\{x+u\alpha-c_j\}_{0\leq u\leq M}\cap [-M^{-1-\eps},M^{-1-\eps}]$ is at most a singleton. In particular, since $N<T^{1-\eps}$ it follows that 
\begin{equation}\label{eq1N} \forall u \in [1,N] : \|x+u\alpha-c_j\|\geq     N^{-1-\eps}.
\end{equation}  

We have, since $\tau_j=1$ on all but a bounded part of the fiber above $x$
\begin{equation}\label{eqj22} \int_0^{f(x)-w} \tau_j(K_tx)dt  \geq \eps T/2,\end{equation}
while \eqref{eq1N} and exactly the same argument  as in Corollary \ref{LmErgSumRetT2} imply that 
\begin{equation*} \left| \int_{f(x)-w}^T \tau_j(K_tx)dt\right| \leq \eps T/4.
\end{equation*}
Hence \eqref{eqj22} allows to conclude that 
$\int_0^{T} \tau_j(K_tx)dt  \geq \eps T/4.$

\medskip 
\noindent{\it Case II.2. There exists $j \in   \{1,\ldots,s+3\}$ such that  $\|x-c_j\|\leq \eps^{-1} T^{-1/\gamma}$ and $f(x)-w\leq \eps T$.}

Since for any $j\in \{1,\ldots,s+3\}$ and any $u\in [1,N-1]$ we have $\|x+u\alpha-c_j\|> \eps^{-1} T^{-\frac{1}{\gamma}}$, Lemma \ref{lem:DK0} implies that $S_{N-1}(f)(x+\alpha)\leq \sqrt{\eps}T$.

This, and the fact that $f(x)-w\leq \eps T$, imply that for times $t\leq [T_0, T]$, with $T_0\leq \sqrt{\eps}T+\eps T$ the orbit of $(x,w)$ $K_t(x,w)$ is in the last fiber above $x+N\alpha$. In particular $\|x+N\alpha-c_{j'}\|\leq \eps^{-1} T^{-\frac{1}{\gamma}}$ for some $j' \neq j$. Since $S_{N-1}f(x+\alpha)\leq \sqrt{\eps}T$, the decomposition in equation \eqref{mult:dec} used as in Corollary \ref{LmErgSumRetT2} implies that
  $\int_{0}^T \tau_{j'}(K_tx)dt \geq \eps^{2}T.$

\medskip 
\noindent{\it Case II.3. For all $j \in   \{1,\ldots,s+3\}$,  $\|x-c_j\|> \eps^{-1} T^{-1/\gamma}$.}
In this case we have the following bound: $\|x+u\alpha-c_j\|\geq \eps^{-1} T^{-1/\gamma}$ for $u\in[0,N-1]$ and all $j\leq s+3$. Then  Lemma \ref{lem:DK0} implies that $S_{N}f(x)\leq \sqrt{\eps}T$. Hence, after time $T_0\leq S_N(f)(x)\leq \sqrt{\eps}T$ the orbit of $(x,w)$ up to time $T$ moves vertically up in the last fiber above $x+N\alpha$. This in particular means that  $\|x+N\alpha-c_{j'}\|\leq \eps^{-1} T^{-\frac{1}{\gamma}}$. We then conclude as in Case~II.2.
\end{proof}

\noindent
{\bf Case 2.} $N(x,w,T)>T^{1-\eps}$.


 We will now split the phase space $\T^2$ into finitely many disjoint sets, and we will estimate the ergodic integrals separately on them.  Let $V_\kappa (\bar{c})$ be the 
 $\kappa$ neighborhood of $\bar{c}\in \T^2$ (or $\T$). Let $i_1,\ldots,i_s$ be different elements from $\{1,\ldots,s+3\}$.

 Because $(c_1,\ldots,c_{s+3})$ is {\it good}, it follows from $(\cG2)$ that Proposition \ref{prop.choice.tau} is implied by 
 \begin{proposition} \label{prop.tau.S}  For every choice 
  $(i_1,\ldots,i_s)\subset \{1,\ldots,s+3\}$
\begin{equation}\label{eq:sti'}
\Leb_{\T^2}\Big(\{{\bf x}\in S(T,(i_v)_{v=1}^s)\;:\; \max_{j \in \{i_1,\ldots,i_s\}} \Big\|\int_{0}^T \tau_j(K_t{\bf x})dt\Big\|\leq T^{\gamma^2+\eps/2}\}\Big)\leq T^{-5}.
\end{equation}
\end{proposition}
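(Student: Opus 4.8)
\emph{Strategy.} Fix a good tuple $(c_1,\dots,c_{s+3})\in\cC$, a choice $(i_1,\dots,i_s)\subset\{1,\dots,s+3\}$, and $T$ large; denote by $\mathcal B=\mathcal B_T$ the set on the left of \eqref{eq:sti'}. The plan is to show that every ${\bf x}=(x,w)\in\mathcal B$ forces its base point $x$ — or, in one degenerate situation, the once–shifted base point $x+\alpha$ — to lie in an intersection of $s$ translates of a single set $A_M$ with $M\asymp T$, and then to invoke $(\cG2)$ together with a Fubini argument in the fiber. Since $\gamma^2+\eps/2<1$ we have $T^{\gamma^2+\eps/2}<\eps^2T$ for $T$ large, so by Proposition~\ref{prop.Nsmall} every ${\bf x}\in\mathcal B$ has $N:=N(x,w,T)>T^{1-\eps}$; thus $N\in(T^{1-\eps},CT]$.

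\emph{Reduction to an ergodic sum.} Fix ${\bf x}\in\mathcal B$ and $j\in\{i_1,\dots,i_s\}$ and apply Corollary~\ref{LmErgSumRetT2}. The term $|\bar f(x+N\alpha-c_j)|$ is $\le C\,T^{\gamma^2}\ln^{7}T\ll T^{\gamma^2+\eps/2}$ by \eqref{eq:awaysing2} with $u=0$, and the terminal partial–fiber integral in \eqref{mult:dec} is bounded by it, as in the proof of Corollary~\ref{LmErgSumRetT2}. For $|\bar f(x-c_j)|$: by the $\kappa$–separation of the $\bar c_i$ there is at most one index $j_0\in\{i_1,\dots,i_s\}$ for which $\|x-c_{j_0}\|$ is small enough that the orbit $\{K_t(x,0)\}_{0\le t<f(x)}$ meets $V_\kappa(\bar c_{j_0})$, and for all $j\neq j_0$ the term is $O_\kappa(1)$. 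For $j_0$, that orbit lies inside $V_\kappa(\bar c_{j_0})$ during a flow–time window $[a,b]$ with $a=O(1)$ and $f(x)-b=O(1)$, so condition \eqref{eq:awaysing} (namely ${\bf x}\notin V_\kappa(\bar c_{j_0})$) forces $w\le O(1)$ or $w\ge f(x)-O(1)$. In the first case the initial partial–fiber integral $\int_0^w\tau_{j_0}(K_t(x,0))\,dt$ is $O(1)$; in the second, ${\bf x}=K_{-(f(x)-w)}(x+\alpha,0)$ lies at flow–distance $O(1)$ from $(x+\alpha,0)$, whose base point $x+\alpha$ is (for $\kappa$ small, since then $x$ is close to $c_{j_0}$) at distance $\gtrsim1$ from every $c_i$, with $N(x+\alpha,0,T)=N+O(1)$. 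In every case we obtain, with $\theta\in\{x,x+\alpha\}$ and $M=N(\theta,0,T)=N+O(1)$,
\[
\Big|\int_0^T\tau_j(K_t{\bf x})\,dt-S_M(\bar f_0)(\theta-c_j)\Big|\ \le\ C\ln^4T+C\,T^{\gamma^2}\ln^{7}T\ \ll\ T^{\gamma^2+\eps/2}\qquad(j\in\{i_1,\dots,i_s\}).
\]

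\emph{Passing to $A_M$ and concluding.} Hence on $\mathcal B$ we have $|S_M(\bar f_0)(\theta-c_j)|\le C\,T^{\gamma^2+\eps/2}$ for all $j\in\{i_1,\dots,i_s\}$. Since $M\ge\tfrac12T^{1-\eps}$ and $\gamma<1/2$ makes $\gamma^2+\eps<1/2$ (recall $\eps\ll1$), we get $C\,T^{\gamma^2+\eps/2}\le M^{\gamma^2+\eps}$ for $T$ large; therefore $\theta-c_j\in A_M$ for every $j$, i.e. $\theta\in\bigcap_{v=1}^{s}(A_M+c_{i_v})$, a set of $\Leb_1$–measure $\le M^{-6}$ by $(\cG2)$. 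Realizing the Kochergin flow as a special flow over $R_\alpha$ and integrating over the fiber — for a fixed base point and a fixed value $N$ of $N(x,\cdot,T)$ the admissible fiber segment has length $\le\min(f(x),f(x+N\alpha))$, which is $O(1)$ away from the singular columns of $c_{i_1},\dots,c_{i_s}$ and is confined to a bounded portion near $\cT$ by \eqref{eq:awaysing}, \eqref{eq:awaysing2} on the remaining pieces, while in any case the total fiber contribution is controlled, up to the bounded factor $\int f$, by $\Leb_1$ of the base sets (these stay away from the relevant singularities, as $A_M$ avoids a neighbourhood of $0$) — one concludes
\[
\Leb_{\T^2}(\mathcal B)\ \le\ C\!\!\sum_{M>\frac12T^{1-\eps}}\ \sum_{|k|\le C}\Leb_1\!\Big(\bigcap_{v=1}^s(A_{M+k}+c_{i_v})\Big)\ \le\ C\!\!\sum_{M>\frac12T^{1-\eps}}M^{-6}\ =\ o(T^{-5}),
\]
the exponent $6$ — available in $(\cG2)$ thanks to the choice $s=50/\gamma$ and $\gamma<1/2$, cf. the proof of Lemma~\ref{lemma.G2} — absorbing the $T^{\eps}$ losses. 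This gives \eqref{eq:sti'}.

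\emph{Main obstacle.} The delicate step is the ergodic–sum reduction: a deep visit of a fiber endpoint to a singular column produces a boundary contribution of size $\asymp\|x-c_j\|^{-\gamma}$, which would overwhelm the target scale $T^{\gamma^2+\eps/2}$; the conditions \eqref{eq:awaysing} and \eqref{eq:awaysing2} defining $S(T,(i_v)_{v=1}^s)$ are tailored precisely so that the only surviving configurations are benign — the fiber near the bottom of its column, or near the terminal return, the latter disposed of by sliding to the next return. After that one must still verify that $S_M(\bar f_0)(\theta-c_j)$ is really governed, via Lemma~\ref{lem:DK0} through the closest–approach term, by membership of $\theta-c_j$ in $A_M$, which is exactly what the combinatorial input $(\cG2)$ is built around; this, together with the fiberwise Fubini bookkeeping, is routine but relies on the quantitative gains coming from $\gamma<1/2$.
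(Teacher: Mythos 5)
Your overall strategy (use Proposition~\ref{prop.Nsmall} to force $N>T^{1-\eps}$, reduce the orbital integral to $S_M(\bar f_0)(\cdot-c_j)$ via Corollary~\ref{LmErgSumRetT2} and the defining conditions of $S(T,(i_v))$, and then feed the resulting membership in $\bigcap_v(A_M+c_{i_v})$ into $(\cG2)$) is the same as the paper's, and your treatment of the boundary terms is acceptable. But the final summation does not close. You sum over all individual values $M=N(x,w,T)$, and your displayed bound
\begin{equation*}
C\sum_{M>\frac12 T^{1-\eps}}M^{-6}\ \asymp\ \bigl(T^{1-\eps}\bigr)^{-5}\ =\ T^{-5+5\eps}
\end{equation*}
is \emph{larger} than $T^{-5}$, not $o(T^{-5})$: the exponent $6$ in $(\cG2)$ does not ``absorb the $T^{\eps}$ losses'' when the number of summands is of order $T$ and the smallest admissible $M$ is only $T^{1-\eps}$. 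Nor can you repair this by restricting to $M\geq cT$: first, the exceptional set where $N<cT$ is not covered by Proposition~\ref{prop.Nsmall} for $c$ of order $1/\!\int f$, and second, even $\sum_{M\geq cT}M^{-6}\asymp c^{-5}T^{-5}/5$ carries a huge constant (here $c\approx 1/(s+3)$), which violates the stated bound $\leq T^{-5}$ and, more importantly, the $o(T^{-5})$ needed for $S2$.

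The missing device is the blocking of the return number that the paper performs in \eqref{DefW}: one partitions $S(T,(i_v))$ into sets $W(u)$ on which $N(x,w,T)\in[uT^\gamma,(u+1)T^\gamma)$, and then replaces $S_{N(x,w,T)}(\bar f_0)(\cdot-c_j)$ by the \emph{frozen} sum $S_{[uT^\gamma]}(\bar f_0)(\cdot-c_j)$ plus a tail of length at most $T^\gamma$, which Lemma~\ref{lem:lowo} controls by $T^{\gamma^2+\eps^2}$ precisely thanks to condition \eqref{eq:awaysing2} (this is the real role of \eqref{eq:awaysing2}, which you use only for the single endpoint term $\bar f(x+N\alpha-c_j)$). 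With the length of the ergodic sum fixed on each block, the bad base set is a single intersection $\bigcap_v(A_{[uT^\gamma]}+c_{i_v})$ of measure $\leq T^{-6+6\eps}$ per block, and there are only $O(T^{1-\gamma})$ blocks, giving $O(T^{-5-\gamma+6\eps})\leq T^{-5}$ once $6\eps<\gamma$. The extra factor $T^{-\gamma}$ gained by grouping $\sim T^{\gamma}$ values of $N$ into one block is exactly what beats the $T^{6\eps}$ loss; your per-$M$ accounting forfeits it, and this cannot be recovered without the tail estimate of Lemma~\ref{lem:lowo}, since the values of $N(x,w,T)$ genuinely spread over a window of width $\asymp T^{\gamma}$ as $(x,w)$ varies (because of the closest-approach term in Lemma~\ref{lem:DK0}).
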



We will now further partition the set $S(T,\{i_v\}_{v=1}^s)$ into level sets according to the values 
of $N(x,w,T)$. For $u\in [T^{1-\gamma-\eps},CT^{1-\gamma}]$, $u \in \N$ let
\begin{equation}
\label{DefW}
W(u,\{i_v\}_{v=1}^s):=\{{\bf x}=(x,w)\in S(T,\{i_v\}_{v=1}^s): N(x,w,T)\in [uT^\gamma,(u+1)T^\gamma)\}.
\end{equation}

Since $u\leq CT^{1-\gamma}$ to establish Proposition \ref{prop.tau.S} it is sufficient to prove

 \begin{proposition} \label{prop.tau.U}  For $T$ sufficiently large, for every 
 $s$-tuple  
  $(i_1,\ldots,i_s)\subset \{1,\ldots,s+3\}^s$, for every  $u\in [T^{1-\gamma-\eps},CT^{1-\gamma}]$   
 \begin{equation}\label{eq:sti}
\Leb_{\T^2}\Big(\{{\bf x}\in W(u,\{i_v\}_{v=1}^s)\;:\; \max_{j \in \{i_1,\ldots,i_s\}} \Big\|\int_{0}^T \tau_j(K_t{\bf x})dt\Big\|\leq T^{\gamma^2+\eps/2}\}\Big)\leq T^{-6+6\eps}.
\end{equation}
\end{proposition}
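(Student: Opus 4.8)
The plan is to fix $u\in[T^{1-\gamma-\eps},CT^{1-\gamma}]$ and an $s$-tuple $(i_1,\dots,i_s)$, to abbreviate $N=N(x,w,T)$ (so $N\in[uT^\gamma,(u+1)T^\gamma)$ and $N\ge T^{1-\eps}$ throughout $W(u,(i_v)_{v=1}^s)$), and to reduce the estimate to a counting statement about ergodic sums of $\bar f_0$ over the rotation. Applying Corollary \ref{LmErgSumRetT2} to $\tau_j$ for $j\in\{i_1,\dots,i_s\}$, the error term $|\bar f(x+N\alpha-c_j)|$ is $\le CT^{\gamma^2}\ln^7 T$ by \eqref{eq:awaysing2} with $u=0$, while the term $|\bar f(x-c_j)|$ --- large only when $\bx$ sits in a fibre over a point close to a chosen singularity --- is treated directly from \eqref{mult:dec}: such points either have $\int_0^T\tau_j(K_t\bx)\,dt\gg T^{\gamma^2+\eps/2}$, and so are not counted, or (when the orbit leaves the tall fibre immediately) the boundary integral $-\int_0^w\tau_j(K_t(x,0))\,dt$ cancels the leading Birkhoff term and one is reduced to exactly the same count for the ergodic sum one step later. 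In every case, on the counted set $|S_N(\bar f_0)(x-c_j)|\le T^{\gamma^2+\eps/2}+CT^{\gamma^2}\ln^7 T\le 2T^{\gamma^2+\eps/2}$; since $N\ge T^{1-\eps}$ and $\gamma^2+\eps<\tfrac12$ this is $\le N^{\gamma^2+\eps}$ for $T$ large, so $x-c_j\in A_N$ by \eqref{eq:an0} for every $j\in\{i_1,\dots,i_s\}$.

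Given this, the measure bound would proceed as follows. Up to the exceptional set of measure $o(T^{-6})$ isolated above, the set in \eqref{eq:sti} is contained in
\[
\bigsqcup_{N\in[uT^\gamma,(u+1)T^\gamma)}\Big\{\bx=(x,w):\ N(x,w,T)=N,\ \ x\in B_N':=\bigcap_{j\in\{i_1,\dots,i_s\}}(A_N+c_j)\Big\}.
\]
By $(\cG2)$, which rests on the covering of $A_N$ supplied by Proposition \ref{lemma.delta}, $\Leb_1(B_N')\le N^{-6}$ for all large $N$; and since $s=50/\gamma$, the argument of Lemma \ref{lemma.G2} in fact delivers any prescribed negative power of $N$, which is what the final arithmetic needs.

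The crux is the integration in the fibre direction. Using the special-flow structure and the identity $K_t(x,w)=K_{t+w}(x,0)$, for fixed $x$ the $w$-section of $\{N(\cdot,\cdot,T)=N\}$ lies in $[0,f(x))\cap[S_Nf(x)-T,\,S_{N+1}f(x)-T)$, hence has length at most $\min\bigl(f(x),f(x+N\alpha)\bigr)$; so one needs to estimate $\int_{B_N'}\min\bigl(f(x),f(x+N\alpha)\bigr)\,dx$. Bounding $\min$ by $f(x+N\alpha)$, changing variables, and using $f=\sum_i\bar f(\cdot-c_i)$ together with the rearrangement inequality for the integrable, only mildly singular $\bar f$, one gets $\int_{B_N'+N\alpha}f\le C\,|B_N'|^{1-\gamma}$, a fixed negative power of $N\ge T^{1-\eps}$; summing over the $\asymp T^\gamma$ values of $N$ in the window, and then over the $\asymp T^{1-\gamma}$ windows, yields \eqref{eq:sti}, hence \eqref{eq:sti'} and Proposition \ref{prop.choice.tau}, provided $s$ is large enough in terms of $\gamma$. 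I expect the genuinely delicate point to sit inside this last step: one must control $f$ along the orbit near the three \emph{unchosen} singularities $c_i$, $i\notin\{i_1,\dots,i_s\}$ --- both on the window of returns and on the two boundary fibres --- so that $f$ stays polylogarithmic off a set of measure $o(T^{-6})$. This is where $(\cG1)$ (disjointness of $G(c_i,n)$ and $G(c_j,n)$ at the scale $q_n\asymp T^\gamma$ governing the window) enters, together with the fact that $N\ge T^{1-\eps}$ already forces $S_Nf(x)\le T+O(T^\gamma\ln^5 T)$ by Lemma \ref{lem:DK0}, which caps the time the orbit can dwell near any singularity.
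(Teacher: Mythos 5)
Your reduction to the base is the same as the paper's: you use Corollary \ref{LmErgSumRetT2} together with \eqref{eq:awaysing}, \eqref{eq:awaysing2} to replace $\int_0^T\tau_j(K_t\bx)\,dt$ by a Birkhoff sum of $\bar f_0(\cdot-c_j)$ up to errors of size $O(T^{\gamma^2}\ln^7T)$, and then invoke $(\cG2)$ on $\bigcap_j(A_N+c_j)$. The only structural difference in this part is cosmetic: the paper freezes the length of the sum at $[uT^\gamma]$ and disposes of the tail $\sum_{\ell=[uT^\gamma]}^{N}$ via Lemma \ref{lem:lowo}, whereas you work with the exact value of $N$ and let $A_N$ vary over the window; both are acceptable.

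The gap is in your final, fibre-integration step, which is where you depart from the paper (the paper passes directly from the one-dimensional bound \eqref{eq:sti2} to the two-dimensional bound \eqref{eq:sti}; you have correctly isolated a point it treats tersely, but your quantitative resolution does not prove the stated inequality). Your per-$N$ estimate is $\int_{B_N'}f\le C|B_N'|^{1-\gamma}$ with $|B_N'|\le N^{-6}$ from $(\cG2)$, i.e.\ about $N^{-6(1-\gamma)}$; for $\gamma$ near $1/2$ and $N\asymp T^{1-\eps}$ this is about $T^{-3}$, and summing over the $\asymp T^\gamma$ values of $N$ in one window gives roughly $T^{\gamma-3}$, which is far larger than the required $T^{-6+6\eps}$. (Note that \eqref{eq:sti} is a per-window bound; the sum over the $\asymp T^{1-\gamma}$ windows belongs to Proposition \ref{prop.tau.S}, not here, so you cannot recover the loss there either.) Your escape hatch ``provided $s$ is large enough in terms of $\gamma$'' is not available: the proposition is stated for the fixed $s=50/\gamma$, $(\cG2)$ gives exactly the exponent $6$, and your assertion that the argument of Lemma \ref{lemma.G2} ``delivers any prescribed negative power of $N$'' is false --- Proposition \ref{lemma.delta} covers $A_N$ by $3N$ intervals of length $2N^{-1-\gamma/5}$, so the expectation in \eqref{FewTranslates} is $(6N^{-\gamma/5})^s=6^sN^{-10}$, and Markov plus Borel--Cantelli caps the almost-sure exponent strictly below $10$, while your route needs an exponent exceeding $(6+\gamma)/(1-\gamma)>12$ when $\gamma$ is close to $1/2$. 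As written, therefore, the proposal does not establish \eqref{eq:sti}.
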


\begin{proof}[Proof of Proposition \ref{prop.tau.U}]
Since the indexing set $\{i_v\}_{v=1}^s$ is fixed, we will drop it from the notation of $W(u)$. 

In all this proof, we will suppose that ${\bf x}=(x,w)\in W(u)$ for some fixed choice of $u\in [T^{1-\gamma-\eps},CT^{1-\gamma}]$ and $i_1,\ldots,i_s$. 
In the proof, we may use $\tau$ to denote any of the functions $\{\tau_i\}$ for $i\in \{i_1,\ldots,i_s\}$.  From the decomposition \eqref{mult:dec}  and \eqref{eq:awaysing}, we have for any $j \in \{i_1,\ldots,i_s\}$
\begin{equation} \label{eq.00} \left|\int_{0}^T\tau_j(K_t({\bf x}))dt- \sum_{u=0}^{N(x,w,T)-1}\bar{f}_0(x+u\alpha-c_j) \right|\leq (\ln T)^5.\end{equation}

We have 
\begin{equation}\label{eq:split2}
\sum_{u=0}^{N(x,w,T)-1}\bar{f}_0(x+u\alpha-c_j)=S_{[uT^\gamma]}(\bar{f}_0)(x+c_j)+ 
\sum^{{ N(x,w,T)}}_{{ \ell=[uT^\gamma]}}\bar{f}_0(x+\ell\alpha-c_j)
\end{equation}

In the following lemma we use $(\cG3)$ to see that the contribution of the second summand is of lower order, so that we can in the sequel focus only on the first one. 
\begin{lemma}\label{lem:lowo} For every $x\in \T$ so that $ (x, 0)\in W(u)$ and for every $j\in \{i_1,\ldots, i_s\}$,
$$
\Big|\sum_{\ell=[uT^\gamma]}^{N(x,w,T)}\bar{f}_0(x+\ell\alpha-c_j)\Big|<T^{\gamma^2+\eps^2},
$$
for sufficiently large $T>0$.
\end{lemma}
\begin{proof}
We have
$$ 
\sum_{\ell=[uT^\gamma]}^{N(x,w,T)}\bar{f}_0(x+\ell\alpha-c_j)=S_{N(x,w,T)-[uT^\gamma]}(\bar{f}_0(\cdot-c_j)(x+[uT^\gamma]\alpha).
$$
By \eqref{eq:awaysing2} and since $|[uT^\gamma]-N(x,w,T)|\leq T^\gamma$ it follows that 
$$
\left\{x+[uT^\gamma]\alpha+u\alpha\right\}_{u=0}^{N(x,w,T)-[uT^\gamma]}\bigcap 
\left[-\frac{1}{2T^\gamma\ln^{7}T}+ c_j, c_j+\frac{1}{2T^\gamma\ln^{7}T}\right]=\emptyset. 
$$
This and Lemma \ref{lem:DK0} gives
$\DS
|S_{N(x,w,T)-[uT^\gamma]}(\bar{f}_0(\cdot-c_j)(x+[uT^\gamma])\alpha)|={\rm O} (T^{\gamma^2}\ln^{200}T).
$
\end{proof}

Since $uT^\gamma \geq T^{1-\eps}$ implies that $[uT^\gamma]^{\gamma^2+\eps}\geq T^{\gamma^2+\eps/2}$, it follows from   \eqref{eq:split2} and Lemma \ref{lem:lowo}, that  \eqref{eq:sti} holds if we show that  
\begin{equation}\label{eq:sti2}
Leb_1\Big(\{x\in \T\;:\; \max_{j \in \{i_1,\ldots,i_s\}} |S_{[uT^\gamma]}(\bar{f}_0)(x-c_j)|\leq [uT^\gamma]^{\gamma^2+\eps}\}\Big)\leq T^{-6+6\eps}.
\end{equation}
By $(\cG2)$ in the definition of  $(c_{1},\ldots, c_{s+3})$ being good, we have that the LHS of \eqref{eq:sti2} is less than $ (uT^\gamma)^{-6}\leq T^{-6+6\eps}$. This gives \eqref{eq:sti}. The proof of Proposition \ref{prop.tau.U} is now finished. 
\end{proof}

Proposition \ref{prop.choice.tau} 
follows from Propositions \ref{prop.Nsmall} and 
\ref{prop.tau.S}. \hfill $\Box$

With Propositions  \ref{prop.choice.c}  and 
\ref{prop.choice.tau}  proved, Theorem \ref{prop:sch} is established. We will now see how the construction can be modified in order to yield the real analytic examples of Theorem \ref{thm:main2}. 

\section{ Slowly parabolic Kochergin flows. The analytic case} \label{sec.analytic}

As the Kochergin flow $(K_T)$ and the fiber $\R^d$ action $(G_{\bf t})$ can both be taken to be analytic, the only thing we need to show in Proposition \ref{prop:1} is that there exists an analytic ${\bf \bar{\tau}}$ satisfying $S2$ and that there exists an analytic function $H$ with $\sigma^2(H)\neq 0$. Let $c_1,\ldots, c_{s+3}\in \mathcal{C}$ be the fixed points of $(K_T)$. Choose a large integer $L$.
For $i\leq s+3$ let $\bar{\tau}_i:\T^2\to\R$ be an analytic mean zero function such that $\bar{\tau}_i(c_i)=1$, $\bar{\tau}_i(c_j)=0$ and such that all derivatives of $\bar{\tau}_i$ up to order $L$ vanish at all the $\{c_i\}_{i=1}^{s+3}$ and define $\bar{\btau}(x)=(\bar{\tau}_1(x),\ldots,\bar{\tau}_{s+3}(x))$. Recall that  ${\btau}\in C^\infty(\T^2)$ was defined in \S \ref{sec.tau.choice}.

We have the following approximation result.

\begin{lemma}\label{lem:alcob} If $L$ is sufficiently large, then
$$
\nu\Big(\{{\bf x}\in \T^2\;:\; \Big\|\int_0^T[\btau-\bar{\btau}](K_t{\bf x})dt\Big\|>\ln^5 T  \}\Big)=o(T^{-100}).
$$
\end{lemma}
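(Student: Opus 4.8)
The plan is to reduce the statement to a uniform estimate on ergodic integrals of the difference $\btau-\bar\btau$, which — crucially — is a $C^\infty$ function vanishing to high order at every singularity $\bar c_i$. The key observation is that $\psi_i := \tau_i - \bar\tau_i$ is a mean-zero $C^\infty$ function on $\T^2$ all of whose derivatives up to order $L$ vanish at each $\bar c_j$, so near each singularity it is dominated by $\|\bx-\bar c_j\|^{L+1}$. Since the roof function $f$ only blows up like $\theta^{-\gamma}$ with $\gamma<1/2$ near a singularity, the orbital time spent by $K_t$ within distance $\delta$ of $\bar c_j$ is $O(\delta^{1-\gamma})$, and on that piece of orbit $\psi_i$ is at most $O(\delta^{L+1})$; away from the singularities $\psi_i$ is just a bounded-variation (indeed $C^\infty$) function on the circle after passing to the special-flow picture.

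First I would pass to the special flow representation as in \eqref{eq.special} and use the decomposition \eqref{mult:dec} to write $\int_0^T \psi_i(K_t\bx)\,dt$ as a Birkhoff sum $\sum_{u=0}^{N-1}\varphi_{\psi_i}(x+u\alpha)$ of the induced function $\varphi_{\psi_i}(x)=\int_0^{f(x)}\psi_i(K_t(x,0))\,dt$, plus two boundary terms of size $O(f(x))+O(f(x+N\alpha))$; these are handled exactly as in Corollary \ref{LmErgSumRetT2}, contributing at most $T^\gamma$, well below $\ln^5 T$ on the good set. The point is then to split $\varphi_{\psi_i}=\varphi_{\psi_i}^\kappa + (\varphi_{\psi_i})_\kappa$ into the part of the fiber away from the $\kappa$-neighborhoods of singularities (where $\psi_i$ is genuinely $C^\infty$ and $f$ is bounded, so $\varphi_{\psi_i}^\kappa\in BV(\T)$ with norm $O(1)$) and the parts over small neighborhoods of the $c_j$. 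For the latter, on a dyadic annulus $\|x-c_j\|\in[2^{-k-1},2^{-k}]$ the fiber length is $O(2^{k\gamma})$ and $|\psi_i|=O(2^{-k(L+1)})$, so the contribution of $(\varphi_{\psi_i})_\kappa$ is pointwise $O(2^{-k(L+1-\gamma)})$ summed over $k$, i.e. globally bounded, and in fact $(\varphi_{\psi_i})_\kappa$ is itself a bounded-variation function with total variation $O(1)$ once $L$ is large. Hence $\varphi_{\psi_i}\in BV(\T)$ with controlled norm, and Lemma \ref{LmSumBV} from the appendix gives $|\sum_{u=0}^{N-1}\varphi_{\psi_i}(x+u\alpha)| = O(\ln^4 N) = O(\ln^4 T)$ for all $x$ (the logarithmic loss coming from $\alpha\in\cD$), which is already far inside $\ln^5 T$.

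Actually, since the $BV$ bound gives the estimate for \emph{every} $x$ once $N\le CT$, the measure of the exceptional set should be literally zero for large $T$; but to be safe one can note that the only place where the argument needs $L$ large rather than $L\ge 2$ is in making $(\varphi_{\psi_i})_\kappa$ have finite total variation uniformly (one needs $L+1-\gamma>1$, so $L\ge 2$ suffices, but taking $L$ comfortably large removes any borderline issues with the $C^3$ regularity of $\bar f$ and the gluing). The error $\ln^4 T < \ln^5 T$ then holds deterministically for all sufficiently large $T$, so the $\nu$-measure of the exceptional set is $0=o(T^{-100})$. I expect the main obstacle to be the bookkeeping that turns "$\psi_i$ vanishes to order $L$ at $\bar c_j$ on $\T^2$" into "$\varphi_{\psi_i}$ has finite total variation on the transversal circle": one must check that differentiating the orbital integral $\int_0^{f(x)}\psi_i(K_t(x,0))\,dt$ in $x$ — which brings down a factor involving $f'(x)\sim \theta^{-1-\gamma}$ and the derivative of the flow near the degenerate saddle — is still integrable near $c_j$, and this is exactly where the high vanishing order of $\psi_i$ (against the fixed singularity strength $\gamma<1/2$) is used. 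Everything else is a routine repetition of the estimates already established in \S\ref{sec.preliminaries} and \S\ref{SSprop.choice.tau}.
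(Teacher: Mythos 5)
There is a genuine gap at the central step: you claim that the induced function $\varphi_{\psi_i}(x)=\int_0^{f(x)}[\tau_i-\bar\tau_i](K_t(x,0))\,dt$ is of bounded variation on $\T$ with $O(1)$ norm, so that Lemma \ref{LmSumBV} applies and the exceptional set is empty. Your dyadic estimate only controls the \emph{sup norm} of $\varphi_{\psi_i}$ (and even there the bookkeeping is off: on the fiber over a point with $\|x-c_j\|\sim 2^{-k}$ the orbit is not uniformly at distance $2^{-k}$ from the saddle — it enters and exits the $\kappa$-neighborhood at distance $\sim\kappa$, so the dyadic decomposition must be taken in the distance of the orbit point to $\bar c_j$, not in $\|x-c_j\|$). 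Boundedness does not give bounded variation. When you differentiate the orbital integral in $x$ you get, among other terms, $f'(x)\,\psi_i(K_{f(x)}(x,0))=f'(x)\,\psi_i(x+\alpha,0)$, where $\psi_i$ is evaluated at the \emph{return point on the transversal}, far from every saddle; the high-order vanishing of $\psi_i$ at $\bar c_j$ gives no control of this factor, and $f'(x)\sim\|x-c_j\|^{-1-\gamma}$ is not integrable. Whether the cancellation between this term and the integral term leaves an integrable derivative is exactly the delicate point, and the answer from the literature is negative: the paper invokes Theorem 4.1 of \cite{FM} to conclude only that $\psi_i$ (the induced function) has \emph{logarithmic} singularities at the $c_i$ when $L$ is large. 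A function with logarithmic singularities is unbounded and not in $BV(\T)$, so the Denjoy--Koksma/Ostrovski bound of Lemma \ref{LmSumBV} is not available, and your conclusion that the exceptional set is empty is unjustified.

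This is why the lemma is stated with an exceptional set of measure $o(T^{-100})$ rather than zero. The paper's argument restricts to the good set $B_{good}(T)$ of points whose orbit up to time $T$ stays at distance $\geq T^{-200/\gamma}$ from all singularities (its complement has measure $o(T^{-100})$, which is the source of the error term in the statement); on that set it uses \cite[Lemma 7.1]{FU} to relate the orbital integral to $S_{N(x,w,T)}(\psi_i)(x)$ with error $\leq \ln^2 T$, and then bounds the Birkhoff sums of a function with logarithmic singularities via \cite[Proposition 5.2]{CW} together with $\alpha\in\cD$ and $N\leq CT$, where the term coming from the closest visit $x_{min,N}$ is only of size $O(\ln T)$ \emph{because} one has restricted to $B_{good}(T)$. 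To repair your argument you would either have to prove the $BV$ claim (which contradicts, or at least substantially strengthens, what \cite{FM} provides and would require carrying out the cancellation analysis you defer), or follow the route through functions with logarithmic singularities and accept a nonempty exceptional set.
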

The  lemma together with the fact that $\btau$ satisfies Proposition \ref{prop.choice.tau}, immediately gives that $\bar{\btau}$ satisfies $S2$. So we only need to prove the lemma.
\begin{proof}[Proof of Lemma \ref{lem:alcob}]
Notice that for each $i\leq s+3$ the function $\tau_i-\bar{\tau}_i$ is a mean zero $C^\infty$ function which is zero at all the $c_i$ and also the derivatives up to order $L$ are zero  at all the $c_i$. Let $f$ be the roof function and let $\psi_i:\T\to \R$,  $\psi_i(x)=\int_0^{f(x)}[\tau_i-\bar{\tau}_i](x)d\lambda$. It follows from Theorem 4.1. in \cite{FM} that if $L$ is sufficiently large, then $\psi_i$ has {\em logarithmic} singularities at $(c_i)$. Define $B_{good}(T)$ to be the set of $(x,w)\in X$ such that $\{K_t(x,w):t\leq T\}$ does not intersect the set $\bigcup_{j=0}^{s+3}[-T^{-200/\gamma}+c_j,c_j+T^{-200/\gamma}]$. Clearly $Leb_2(B_{good}(T))=o(T^{-100})$. 
 Now, as shown in the proof of Lemma 7.1. in \cite{FU}
 \footnote{The statement of the lemma only gives an upper bound 
 $$\left|\int_0^T[\tau_i-\bar{\tau}_i](K_t(x,w))dt\right|\leq \left| S_{N(x,w,T)}(\psi_i)(x)\right|+\ln^2 T,$$ but in the proof the authors in fact have the identity $\int_0^T[\tau_i-\bar{\tau}_i](K_t(x,w))dt=S_{N(x,w,T)}(\psi_i)(x)+$Error and the upper bound on the error for $(x,w)\in B_{good}(T)$  is $|\mathrm{Error}|\leq \ln^2T$.}, we have for sufficiently large $T$ and $(x,w)\in B_{good}(T)$ 
 $$\left| \int_0^T[\tau_i-\bar{\tau}_i](K_t(x,w))dt- S_{N(x,w,T)}(\psi_i)(x)\right|\leq \ln^2 T.
$$

Now, since $\psi_i$ has logarithmic singularities,  $N(x,w,T)\leq CT$ 
{ and 
 $\alpha\in \cD,$} 
 it follows that for every $(x,w)\in B_{good}(T)$ and $T$ sufficiently large 
$$
|S_{N(x,w,T)}(\psi_i)(x)|\leq \ln^4T,
$$
(see eg. Proposition 5.2. in \cite{CW}).
This finishes the proof.
\end{proof}

\appendix
\section{Variance}
We assume the flow $(F_T,M,\zeta)$ is given by  \eqref{eq:Tcont}, where $(K_T)$ is
  the Kochergin flow, 
  the fiber $\R^d$ action $(G_{\bf t})$ is exponentially mixing of all orders and the analytic cocylce ${\bf \bar{\tau}}$ defined in Section \ref{sec.analytic}. Here, we prove the following. 
\begin{lemma}
\label{LmVar}
There exists an analytic observable $H:M\to \reals$ such that $\sigma^2(H)\neq 0$.
\end{lemma}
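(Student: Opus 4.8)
The plan is to produce a positive–variance observable of product form $H=\phi\,\psi$ with $\phi\in C^\omega(\T^2)$ real analytic, $\psi\in C^\omega(Y)$ real analytic and $\nu(\psi)=0$ (so that $\zeta(H)=\mu(\phi)\nu(\psi)=0$), following the scheme of \cite[Lemma 8.2]{DDKN}. I will use that, by \cite[Theorem 3.2]{DDKN}, $\sigma^2(H)=\lim_{T\to\infty}\tfrac1T\Var_\zeta\bigl(\int_0^T H\circ F_s\,ds\bigr)$, and that for $H$ of this form the variance is computed fiberwise: since $\int_Y H(x,\cdot)\,d\nu\equiv 0$ and $(G_t)$ is an abelian action,
\[
\Bigl\|\int_0^T H\circ F_s\,ds\Bigr\|_{L^2(\zeta)}^2=\int_{\T^2}\int_0^T\!\!\int_0^T \phi(K_sx)\,\phi(K_tx)\,\widehat C_\psi\bigl(\btau_sx-\btau_tx\bigr)\,ds\,dt\,d\mu(x),
\]
where $\btau_sx=\int_0^s\btau(K_ux)\,du$ and $\widehat C_\psi(a)=\int_Y\psi(G_az)\psi(z)\,d\nu(z)$. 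Exponential mixing of $(G_t)$ gives $\beta>0$ with $|\widehat C_\psi(a)|\le C_\psi e^{-\beta\|a\|}$, while continuity of $\widehat C_\psi$ at $0$ and $\widehat C_\psi(0)=\|\psi\|_{L^2}^2>0$ give $\rho_0>0$ with $\widehat C_\psi\ge\tfrac12\|\psi\|_{L^2}^2$ on $\{\|a\|\le\rho_0\}$.

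Step 1 (a smooth model observable). Let $\psi$ be nonconstant with $\nu(\psi)=0$, let $x_0$ be the point of $S3$, and let $\phi_0\ge 0$ be a $C^\infty$ bump with $\phi_0\equiv 1$ on $B(x_0,\delta/2)$ and $\supp\phi_0\subset B(x_0,\delta)$, for $\delta$ small enough that $S3$ holds and $C\delta\|\btau\|_{C^0}\le\rho_0$. Since $\phi_0$ is supported in $B(x_0,\delta)$, the integrand above vanishes unless $K_sx,K_tx\in B(x_0,\delta)$, and then $S3$ forces $|s-t|\le C\delta$ or $|s-t|\ge(C\delta)^{-1/m}$ — the intermediate lags drop out. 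On $|s-t|\le C\delta$ one has $\|\btau_sx-\btau_tx\|\le C\delta\|\btau\|_{C^0}\le\rho_0$, so $\widehat C_\psi\ge\tfrac12\|\psi\|^2$; as the excursions of the Kochergin orbit to $B(x_0,\delta)$ are pairwise $(C\delta)^{-1/m}$–separated and each of length $\le C\delta$, only same–excursion pairs contribute, and Cauchy–Schwarz in the number $N_B(x,T)$ of such excursions (with $\int_{\T^2}N_B(x,T)\,d\mu\asymp T\delta$ by a coarea/flux estimate) yields a lower bound $\gtrsim \|\psi\|^2\,\delta^{3}\,T$ for this range. On $|s-t|\ge(C\delta)^{-1/m}$ I will use the cocycle identity $\btau_sx-\btau_tx=\btau_{s-t}(K_tx)$, invariance of $\mu$, and $S2$: integrating over $x$ gives $\int_{\T^2}e^{-\beta\|\btau_ux\|}d\mu(x)\le \mu\{\|\btau_ux\|<C\ln^2u\}+e^{-\beta C\ln^2u}=o(u^{-5})+e^{-\beta C\ln^2u}$, so the separated contribution is $\le C T\!\int_{(C\delta)^{-1/m}}^{\infty}\!\bigl(o(u^{-5})+e^{-\beta C\ln^2u}\bigr)\,du=o(\delta^{3})\,T$, the exponent $m<1.1$ in $S3$ and the rate $o(T^{-5})$ in $S2$ being calibrated precisely so that $(C\delta)^{-1/m}$ is large enough to make this smaller than the positive near–diagonal term. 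Hence $\sigma^2(\phi_0\psi)\ge c(\delta)>0$.

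Step 2 (analytic upgrade). The estimates of Step 1 bound $|\Cov_\zeta(H,H\circ F_u)|\le C\|H\|_{C^3}^2\bigl(o(u^{-5})+e^{-\beta C\ln^2|u|}\bigr)$ uniformly over $H$ with zero fiberwise average, so $\sigma^2(H)=\int_{\R}\Cov_\zeta(H,H\circ F_u)\,du$ converges absolutely and $\sigma^2$ is continuous on this class in the $C^3$ topology. Since $C^\omega(\T^2)$ is $C^3$–dense in $C^3(\T^2)$, there are $\phi_\eps\in C^\omega(\T^2)$ with $\phi_\eps\to\phi_0$ in $C^3$; then $H_\eps:=\phi_\eps\psi\in C^\omega(M)$ satisfies $\zeta(H_\eps)=0$ and $\sigma^2(H_\eps)\to\sigma^2(\phi_0\psi)>0$, so $H:=H_\eps$ works for $\eps$ small, proving the lemma.

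The main obstacle is the separated range $|s-t|\ge(C\delta)^{-1/m}$: one must upgrade the single–time estimate $S2$, with its borderline $o(T^{-5})$ decay, to a summable–in–lag bound and combine it with exponential fiber mixing so that the off–diagonal does not swamp the positive near–diagonal term $c(\delta)T$; this is exactly where the super–polynomial factor $e^{-\beta C\ln^2u}$ (from $\ln^2$ beating $\ln$) and the numerical calibration of the exponent $m<1.1$ in $S3$ against the dimension of the base and the decay in $S2$ are used.
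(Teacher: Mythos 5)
Your proposal is correct and its core is the same argument as the paper's: a product observable $H=\phi\,\psi$ localized near the point $x_0$ of $S3$, a split of the time integral into a near-diagonal regime contributing $\asymp\delta^3\|\psi\|_{L^2}^2$, an intermediate regime $|t|\in(C\delta,(C\delta)^{-1/m})$ killed by $S3$, and a far regime controlled by $S2$ together with exponential mixing of the fiber action, with the calibration $m<1.1$ against the $o(T^{-5})$ rate ensuring the tail is $o(\delta^3)$ --- exactly the paper's regimes (I)--(III). The one genuine difference is how analyticity is obtained: you take a $C^\infty$ bump and then upgrade by $C^3$ (in fact $C^0$ would suffice for the base factor) density of analytic functions plus continuity of $\sigma^2$ on the class of fiberwise-mean-zero observables, whereas the paper avoids the approximation step altogether by choosing the base factor to be a periodized Gaussian $\Theta(\cdot,x_0)$, which is already real analytic, and reading off the three regimes from elementary Gaussian estimates (P1)--(P3). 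The paper explicitly notes your density route as a viable alternative that it declines only for self-containedness; your version requires the extra (but routine) verification that $u\mapsto\Cov_\zeta(H,H\circ F_u)$ is dominated uniformly in $\phi$ by an integrable function, which your own tail estimate already supplies, so both arguments close.
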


We note that with a little effort we could extend the proof of Lemma 8.2 in \cite{DDKN}
and show that $\sigma^2$ is not identically equal to zero on $C^1.$  The result then follows from
the density of analytic functions in $C^1.$ However, to make the paper more self contained we 
sketch the argument below.

In the proof below
we shall use the following formula for $\sigma^2(H)$ obtained
 in \cite{DDKN}
\begin{equation}
\label{VarFormula}
\sigma^2(H)=
 \int_{-\infty}^\infty \int_{X}\int_{Y} \brH(x, y) \brH(K_t x, G_{\tau_t(x)} y) d\nu(y) d\mu(x)  dt 
\end{equation}
where $\brH(x,y)=H(x,y)-\int H(x,y) d\nu(y).$ 

This series converges because for each $y\in Y$ we have $\int \brH(x,y) d\nu(y)=0$ and so for each 
$x\in X$ we have due to the exponential mixing of $G$ that 
$$ \left| \int\brH(x, y) \brH(K_t x, G_{\tau_t(x)} y) d\nu(y) \right|\leq 
C\|\brH\|^2 \left[e^{-c\ln^2 t}+1_{|\tau_t(x)|\leq \ln^2 t}\right]. $$
Integrating with respect to $x$ and using $S2$ we get
$$ \left| \iint\brH(x, y) \brH(K_t x, G_{\tau_t(x)} y) d\nu(y) d\mu(x)  \right|\leq 
\frac{C\|\brH\|^2}{t^5} $$
which implies the convergence of \eqref{VarFormula}.

\begin{proof}
[Proof of Lemma \ref{LmVar}]
Given a point $a\in [0,1]^2$ and $\delta>0$
 consider the following function on $\reals^2$
 $$ \Theta(x,a)=\sum_{m\in \integers^2} e^{-\|x-a-m\|^2/\delta^2}.$$
Since $\Theta$ is $\integers^2$ periodic,  it descends to a function on $\T^2$
 which will also be denoted by~$\Theta.$ 
 
 The following estimates of this function are direct consequences of standard properties of Gaussian integrals:
 
 (P1) $\Theta(x,a)\leq e^{-\delta^{-0.1}}$ 
 if $d(x,a)>\delta^{0.9}.$
 
 (P2) $\|\Theta\|_{L^2}=c \delta^2+O(e^{-\delta^{-0.1}}).$ 
 
 (P3) If $d(a_1, a_2)<0.1$ then
 $\DS \int \Theta(x, a_1) \Theta(x, a_2) dx=c\delta^2 e^{-d(a_1, a_2)^2/2 \delta^2}
 +O(e^{-\delta^{-0.1}}).$
 
 Now fix $x_0\in X$  satisfying $S3$ and consider a function 
 $H(x,y)=\Theta(x, x_0) D(y)$  where $D$ is an analytic zero mean function.
  Note  that $\int H(x,y) d\nu(y)\equiv 0$ so $\brH=H$ in this case. Hence
 $$ \sigma^2(H)=
 \int_{-\infty}^\infty \int_{X}\int_{Y} \Theta(x, x_0) \Theta(K_t x, x_0) D(y) D(G_{\tau_t(x)} y) d\nu(y) d\mu(x)  dt .
 $$
 We now divide the integral with respect to $t$ into three parts
 
 (I) $|t|\leq C\delta^{0.9}.$ 
 Using the Taylor expansion of $\|x(t)-x_0\|$ and $D(y(t))$ as well as 
 (P3) we conclude that the contribution of first time segment equals to $\bc \delta^3(1+o(1)) \|D\|_{L^2}^2 $
 for the appropriate constant $\bc>0.$
 
 (II) $C\delta^{0.9}<|t|<\delta^{-0.9/m}.$ 
 In this case either $\Theta(x, x_0)<e^{-\delta^{-0.1}}$ or $\Theta(K_t x, x_0)<e^{-\delta^{-0.1}}$. 
 Otherwise both $x$ and $K_t x$ would be $\delta^{0.9}$-close to $x_0$ due to (P1), 
 which contradicts $S3$.
 Thus the contribution of the second time segment is $O(e^{-\delta^{-0.1}}).$

(III) $|t|>\delta^{-0.9/m}.$ Fix $t$ and split the integral over $X\times Y$ into two regions.

(a) $\|\tau_t(x)\|<C\ln^2 t.$ 
This set has measure $O(t^{-5})$ due to $S2$. 
 
(b) $\|\tau_t(x)\|\geq C\ln^2 t.$ In this case integrating with respect to $y$ and using exponential mixing
 we conclude that the region (b) contributes $O(e^{-C\ln^2 t})$ which is much smaller than our estimate
 for the region (a). 
 
 It follows that the third time segment contributes
 $$ O\left(\int_{\delta^{-0.9/m}}^\infty \frac{dt}{t^5} \right)=
 O\left(\delta^{3.6/m}\right)=o(\delta^3) $$
 since $m<1.1.$
 
 \noindent Combing the estimates (I)--(III) above we conclude that 
 $\sigma^2(H)=\bc \delta^3 (1+o_{\delta>0} (1))\|D\|_{L^2}^2$,
so it is positive for sufficiently small $\delta$ completing the proof of the lemma. 
 \end{proof}

\section{Ostrovski estimate.}
\label{AppO}
 Several proofs in our paper rely on the following standard estimate. 

\begin{lemma}
\label{LmSumBV}
If $\alpha\in \cD$ where $\cD$ is given by \eqref{eq:cp}  and $h\in BV(\cT)$ then for all $x\in \T$
$|S_N(h)(x)|\leq O(\ln^4 N).$
\end{lemma}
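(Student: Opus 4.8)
The plan is to use the Ostrowski representation of $N$ in the numeration system associated to the continued fraction expansion of $\alpha$, combined with the standard three-distance / equidistribution properties of the orbit of a rotation, to reduce the Birkhoff sum $S_N(h)(x)$ to a controlled number of ``denominator-scale'' blocks. First I would write $N = \sum_{k=0}^{n} b_k q_k$ with $0 \le b_k \le a_{k+1}$ and $N \in [q_n, q_{n+1})$, so that the orbit segment $\{x, x+\alpha, \dots, x+(N-1)\alpha\}$ decomposes into at most $\sum_k b_k \le \sum_{k=0}^n a_{k+1}$ consecutive sub-orbits, each of length some $q_k$. The key classical fact I would invoke is that for a sub-orbit of length $q_k$ of the rotation by $\alpha$, the points $\{y + j\alpha : 0 \le j < q_k\}$ are ``almost equally spaced'': any interval of length $1/q_k$ contains at most a bounded number (at most $2$ or $3$) of these points, and more precisely the sum over such a block of a BV function $h$ differs from $q_k \int_\T h\, d\mathrm{Leb}$ by at most $O(\mathrm{Var}(h))$. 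Summing over the $\le \sum_{k \le n} a_{k+1}$ blocks gives $|S_N(h)(x) - N\int h| \le O\big(\mathrm{Var}(h) \sum_{k=0}^n a_{k+1}\big)$, and since $h \in BV$ we may as well absorb $\mathrm{Var}(h)$ into the implied constant. (If one prefers $h$ not to be mean zero, note $S_N(h)(x) = S_N(h_0)(x) + N\int h$ where the $N\int h$ term is exactly the quantity being subtracted off implicitly; in the applications in this paper $h$ will already have bounded variation and the relevant statement is the oscillation bound, so I will state and prove the mean-zero version $|S_N(h_0)(x)| \le O(\ln^4 N)$ and note the reduction.)

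It therefore remains to bound $\sum_{k=0}^{n} a_{k+1}$ when $\alpha \in \cD$. By definition of $\cD$ there is $C>0$ with $q_{k+1} < C q_k \ln^2 q_k$ for all $k$; since $q_{k+1} = a_{k+1} q_k + q_{k-1} \ge a_{k+1} q_k$, this yields $a_{k+1} \le q_{k+1}/q_k < C \ln^2 q_k \le C \ln^2 q_n$. Summing over the $n+1$ values $k = 0, \dots, n$ gives $\sum_{k=0}^n a_{k+1} \le C(n+1) \ln^2 q_n$. Finally, the Diophantine condition also forces $q_n$ to grow sufficiently fast (indeed $q_{k+1} < C q_k \ln^2 q_k$ together with $q_{k+1} \ge 2 q_{k-1}$ — from $q_{k+1} = a_{k+1} q_k + q_{k-1} \ge q_k + q_{k-1} \ge 2q_{k-1}$ — shows $q_k$ grows at least geometrically along even/odd subsequences, hence $q_n \ge c \lambda^n$ for some $\lambda > 1$), so $n \le O(\ln q_n) \le O(\ln N)$ because $N \ge q_n$; and also $\ln q_n \le \ln N$. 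Combining, $\sum_{k=0}^n a_{k+1} \le O\big((\ln N)(\ln N)^2\big) = O(\ln^3 N)$, which already gives $|S_N(h_0)(x)| = O(\ln^3 N)$, a fortiori $O(\ln^4 N)$ as claimed. (The slack between $\ln^3$ and $\ln^4$ is harmless and leaves room if one wants cruder bounds at intermediate steps.)

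The main obstacle, and the only place requiring genuine care, is the per-block estimate: showing that for a BV function the sum over a block of length $q_k$ of the rotation orbit is within $O(\mathrm{Var}(h))$ of its mean. This is exactly the classical Denjoy–Koksma inequality, which says precisely that $|S_{q_k}(h)(y) - q_k \int_\T h| \le \mathrm{Var}(h)$ for every $y$, valid because the points $\{y+j\alpha\}_{0\le j<q_k}$ fall one in each interval of the partition of $\T$ by $\{j\alpha\}_{0 \le j < q_k}$ (the three-distance theorem guarantees these partition intervals have lengths comparable to $1/q_k$, but for Denjoy–Koksma one only needs that exactly one orbit point lies in each). I would cite Denjoy–Koksma directly (e.g. from Kuipers–Niederreiter or Herman) rather than reprove it, so the proof reduces to: (i) Ostrowski decomposition of the orbit into $\le \sum_{k\le n} a_{k+1}$ blocks of sizes $q_k$; (ii) Denjoy–Koksma on each block; (iii) the arithmetic bound $\sum_{k\le n} a_{k+1} = O(\ln^3 N)$ from $\alpha \in \cD$ worked out above.
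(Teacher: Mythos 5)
Your proof is correct and follows essentially the same route as the paper's: an Ostrowski decomposition of $N$ into at most $\sum_{k\le m} a_k = O(\ln N\cdot\ln^2 N)$ blocks of denominator length, the Denjoy--Koksma inequality on each block, and the bound $a_k=O(\ln^2 q_m)$ coming from $\alpha\in\cD$. Your remark that the statement should be read for the mean-zero part (or with $N\int_\T h$ subtracted) matches what the paper's own proof actually establishes, and the extra logarithm of slack you note is likewise present in the paper.
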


\begin{proof}
Recall  
the classical Denjoy-Koksma inequality: 
if $h\in BV(\T)$ then
for every $x\in \T$ and every $n\in \N$
\begin{equation}\label{eq:dkbv}
|S_{q_n}(h)(x)-q_n\int_\T h(x) d x|\leq 2{\rm Var}(h).
\end{equation}
 To bound $S_N(h)(x)$ for general $N\in\N$ we use the Ostrovski expansion to write \\
 $\DS N=\sum_{k=1}^m b_kq_k$ where $q_m\leq N<q_{m+1}$
 and $b_k\leq a_k$. We then use cocycle identity, \eqref{eq:dkbv} and the fact that  $m=O(\ln N)$ while
our assupmtion \eqref{eq:cp} implies that for $k\leq m$ we have $a_k=O(\log^2 N)$.
\end{proof}

\end{document}